\numberwithin{equation}{section}
\newtheorem{theo}{\bf Theorem}[section]
\newtheorem{lem}[theo]{\bf Lemma}
\newtheorem{pro}[theo]{\bf Proposition}
\newtheorem{defi}[theo]{\bf Definition}
\theoremstyle{definition}
\newtheorem{ex}{Example}[section]
\newtheorem{rem}{Remark}[section]
\newcommand{\LH}{L_1}
\newcommand{\Lb}{L_{\beta}}
\newenvironment{Proofc}[1]{\smallskip\par\noindent\textsc{#1}\quad}%
  {\hfill$\Box$\bigskip\par}
\newcommand{\gl@align}[2]{\lower.6ex\vbox{\baselineskip\z@skip\lineskip\z@
\ialign{$\m@th#1\hfil##\hfil$\crcr#2\crcr=\crcr}}}
\newcommand{\Leqq}{\mathrel{\mathpalette\gl@align<}}
\newcommand{\Geqq}{\mathrel{\mathpalette\gl@align>}}
\begin{document}
\title{\bf A rigorous setting for the reinitialization of first order level set equations} 
\author{Nao Hamamuki\footnote{Department of Mathematics, Hokkaido University, Kita 10, Nishi 8, 
Kita-Ku, Sapporo, Hokkaido, 060-0810, Japan. e-mail: hnao@math.sci.hokudai.ac.jp} , Eleftherios Ntovoris\footnote{CERMICS - ENPC 
6 et 8 avenue Blaise Pascal 
Cit\'e Descartes - Champs sur Marne 
77455 Marne la Vall\'ee Cedex 2, France.
e-mail: ntovorie@cermics.enpc.fr}} 
\maketitle

\begin{abstract}
In this paper we set up a rigorous justification for the reinitialization algorithm. 
Using the theory of viscosity solutions,
we propose a well-posed Hamilton-Jacobi equation with a parameter,
which is derived from homogenization for a Hamiltonian 
discontinuous in time which appears in the reinitialization.
We prove that, as the parameter tends to infinity,
the solution of the initial value problem converges to 
a signed distance function to the evolving interfaces.
A locally uniform convergence is shown when the distance function is continuous,
whereas a weaker notion of convergence is introduced 
to establish a convergence result to a possibly discontinuous distance function. 
In terms of the geometry of the interfaces,
we give a necessary and sufficient condition for the continuity of the distance function.
We also propose another simpler equation 
whose solution has a gradient bound away from zero.
\end{abstract}

\textbf{MSC 2010:} 35D40; 35F25; 35A35\\

\textbf{Keywords:} Viscosity solutions; Level set equations; Distance 
function; Reinitialization; Homogenization

\tableofcontents

\section{Introduction}

\paragraph{Setting of the problem}

In this paper we establish a rigorous setting for the reinitialization algorithm. 
In the literature ``reinitialization" usually refers to 
the idea of stopping the process of solving an evolution equation regularly in time
and changing its solution at the stopping time
so that we obtain a function which approximates 
the (signed) distance function to the zero level set of the solution.
A typical example of such evolution equations is 
\begin{equation}\label{intro:eqevol}
u_t=c(x,t)|\nabla u|,
\end{equation}
where $u=u(x,t)$ is the unknown, $u_t=\partial_t u$, 
$\nabla u=(\partial_{x_1}u, \dots ,\partial_{x_n}u)$ and
$| \cdot |$ stands for the standard Euclidean norm in $\mathbf{R}^n$.
The equation \eqref{intro:eqevol} describes
a motion of an interface $\Gamma_t$ in $\mathbf{R}^n$
whose normal velocity is equal to $c=c(x,t)$,
where at each time the zero level set of $u(\cdot,t)$ represents the interface $\Gamma_t$.
In general, the solution of \eqref{intro:eqevol} does not preserve the distance function, 
and its gradient can get very close to zero.
For example, the function
\[ u(x,t)=1-|x|\mathit{e}^{-t} \]
solves the problem
\[ \left\{
\begin{aligned}
& u_t  = |x|\cdot |u_x| &\text{ in }& \mathbf{R}\times (0,T), \\
& u(x,0)=1-|x|& \text{ in } &\mathbf{R}
\end{aligned}
\right. \]
in the viscosity sense.
For the numerical study of \eqref{intro:eqevol}
several simplifications can be made
when the solution is or approximates the distance function.
One of the reasons is the fact that the gradient of the distance function 
is always 1 and thus bounded away from 0. 
When the gradient degenerates like in the above example,
it becomes difficult to compute precisely the zero level sets.
The reinitialization is used to overcome such an issue.
For a more detailed discussion on the numerical profits of the reinitialization, 
see \cite{Sethian.1999, Osher-Fedkiw.2003}.

Several reinitialization techniques have been introduced in the literature. 
In this paper we focus on the one introduced by 
Sussman, Smereka and Osher (\cite{Sussman-Smereka-Osher.1994}). Their method allows to reinitialize \eqref{intro:eqevol} 
without explicitly computing the signed distance function 
with the advantage that the level set function of their method 
approximates the signed distance at every time.

We briefly explain the main idea of the method in \cite{Sussman-Smereka-Osher.1994}.
Consider the {\em corrector equation}
\begin{equation}\label{intro:eqcor}
\phi _t=\mathrm{sign} (\phi)(1-|\nabla \phi|),
\end{equation}
where $\mathrm{sign}(\cdot)$ is the sign function defined as
\[ \mathrm{sign}(r)=\left\{
\begin{aligned}
& \frac{r}{|r|}& &\text{if } r\neq 0,& \\
& 0& &\text{else.}& 
\end{aligned}
\right. \]
The solution of this equation asymptotically converges to 
a steady state $|\nabla\phi |=1$, 
which is a characteristic property of the distance function;
see Subsection \ref{section_compar_eikonal}. 
The purpose of the sign function in \eqref{intro:eqcor} is 
to control the gradient.
In the region where $\phi$ is positive,
the equation is $\phi _t=1-|\nabla \phi |$.
Thus, the monotonicity of $\phi$ is prescribed by the order of $1$ and $|\nabla \phi |$.
This forces $|\nabla \phi|$ to be close to 1 as time passes.
Also, the relation $\mathrm{sign}(0)=0$ guarantees that
the initial zero level set is not distorted
since $\phi_t=0$ on the zero level.
Roughly speaking, the idea of \cite{Sussman-Smereka-Osher.1994} is 
to stop the evolution of \eqref{intro:eqevol} periodically in time and
solve \eqref{intro:eqcor} till convergence to the signed distance function is achieved. 
This method was first applied in \cite{Sussman-Smereka-Osher.1994} 
for the calculation of the interface of a fluid flow, 
with the disadvantage that the fluid flow can lose mass, 
because of the accumulation of numerical errors after many periods are completed. 
This problem was later fixed in \cite{Sussman-Fatemi.1999}.

Up to the authors' knowledge there is no rigorous setting 
for the reinitialization process described above. 
In this paper we study an evolution of an interface $\Gamma_t$
given as the zero level set of the solution $u$
to the initial value problem of the general Hamilton-Jacobi equation
\begin{equation}\label{intro:eqgen}
u_t=H_1(x,t,\nabla u)
\end{equation}
with a Lipschitz continuous initial datum $u_0$. 
Here $H_1=H_1(x,t,p)$ is assumed to be continuous, geometric 
and Lipschitz continuous in $x$ and $p$.
These assumptions are often used in the literature to guarantee 
that \eqref{intro:eqgen} is well-posed. 
As a corrector equation we use a slight modification of \eqref{intro:eqcor}, namely
\begin{equation}\label{intro:eqcor2}
u_t=\frac{u}{\sqrt{\varepsilon _0 ^2+u^2}}h(\nabla u),
\end{equation}
where $\varepsilon _0>0$ is fixed 
and the function $h$ can be one of the following:
\begin{enumerate}[label=(\arabic*) ,ref=\arabic*]
	\item\label{intro:itm1} $h(p)=1-|p|$,
	\item\label{intro:itm2} The plus part of \eqref{intro:itm1}, i.e., $h(p)=(1-|p|)_+$.
\end{enumerate}
The function $\beta (u)=u/\sqrt{\varepsilon _0 ^2 +u^2}$ 
is a smoother version of the sign function. 
Although the function $h$ in \eqref{intro:itm2} 
does not preserve the distance function 
in the sense of \cite{Sussman-Smereka-Osher.1994} and 
in a way that will be made rigorous later in Theorem \ref{Theo1.1} and Example \ref{exmp:no_convergence}, 
it does however prevent the gradient of the solution to approach zero on the zero level set. 
Moreover, it provides a simple monotone scheme 
for the numerical solution of the problems which we will encounter. 
In fact, our result applies for corrector equations 
which are more general than \eqref{intro:eqcor2}, 
but for the sake of simplicity we present, in this section, 
the main idea for this model equation.

The idea, as in \cite{Sussman-Smereka-Osher.1994}, 
is to solve \eqref{intro:eqgen} and \eqref{intro:eqcor2} periodically in time, 
the first for a period of $k_1\Delta t$ and the second for $k_2\Delta t$, 
where $k_1,k_2,\Delta t>0$ and one period will be completed 
at a time step of length $\varepsilon =(k_1+k_2)\Delta t$. 
We are thus led to define the following combined Hamiltonian
\[ H_{12}(x,t,\tau,r,p):=\begin{cases}
H_1(x,\frac{t}{1+\frac{k_2}{k_1}},p)
& \mbox{if} \ (i-1) < \tau \leq (i-1) + \frac{k_1 \Delta t}{\varepsilon}, \\
\frac{u}{\sqrt{\varepsilon _0 ^2+u^2}}h(\nabla u)
& \mbox{if} \ (i-1)+ \frac{k_1 \Delta t}{\varepsilon} < \tau \leq i
\end{cases} \]
for $i=1,..., \lceil \frac{T}{\varepsilon}\rceil$.
Here by $\lceil x \rceil$
we denote the smallest integer which is not smaller than $x\in \mathbf{R}$.
The rescaling of the Hamiltonian $H_1$ in time is required 
since certain time intervals are reserved for the corrector equation.
More precisely, $H_1$ is solved in time length 
$k_1\Delta t\lceil \frac{T}{\varepsilon}\rceil\sim T\frac{k_1}{k_1+k_2}=\frac{T}{1+\frac{k_2}{k_1}}$.
One would expect that solving the two equations infinitely often 
would force the solution of the reinitialization algorithm 
to converge to the signed distance function to $\Gamma_t$;
we denote it by $d$. 
Therefore we are led to study the limit as $\varepsilon \rightarrow 0$ of the solutions of 
\begin{equation}\label{intro:eqHJe}
\left\{
\begin{aligned}
& u^\varepsilon _t
=H_{12} \left( x,t,\frac{t}{\varepsilon},u^\varepsilon ,\nabla u^\varepsilon \right) &\text{ in }& \mathbf{R}^n\times (0,T), \\
& u^\varepsilon (x,0)=u_0(x) & \text{ in } &\mathbf{R}^n.
\end{aligned}
\right.
\end{equation}
This is a homogenization problem with the Hamiltonian $H_{12}$ 
being 1-periodic and discontinuous in the fast variable $\tau =t/\varepsilon$. 
Since the limit above is taken for $\Delta t\rightarrow 0$ 
(and consequently $\varepsilon \rightarrow 0$), 
two free parameters still remain, namely $k_1$ and $k_2$. 
In fact, we show that the solutions of \eqref{intro:eqHJe} converge, 
as $\varepsilon \rightarrow 0$ and after rescaling, to the solution $u^{\theta}$ of 
\begin{equation}\label{intro:eqtheta}
\left\{
\begin{aligned}
& u_t ^\theta = H_1(x,t,\nabla u^\theta )+\theta \beta (u^{\theta})h(\nabla u^\theta ) 
&\text{ in }& \mathbf{R}^n\times (0,T),  \\
& u^\theta (x,0)=u_0(x) &\text{ in }& \mathbf{R}^n.
\end{aligned}
\right.
\end{equation}
Here $\theta =k_2/k_1$ is the ratio of length of the time intervals 
in which the equations \eqref{intro:eqgen} and \eqref{intro:eqcor2} are solved. 
If we solve the corrector equation \eqref{intro:eqcor2} 
in a larger interval than the one we solve the original \eqref{intro:eqgen}, 
we can expect the convergence to a steady state. 
For this reason we study the limit as $\theta \rightarrow \infty$ 
of the solutions of \eqref{intro:eqtheta}.

Let us consider the function $h$ in \eqref{intro:itm1} as the model case.
Roughly speaking, the limit $\theta \to \infty$ forces $h(\nabla u^{\theta})$
to be close to 0 except on the zero level of $u^{\theta}$,
i.e., $|\nabla u^{\theta}| \approx 1$ for large value of $\theta$.
If we further know that the zero level set of $u^{\theta}$ is
the same as that of the solution of \eqref{intro:eqgen}
and hence is equal to $\Gamma_t$
(we call this property a preservation of the zero level set),
then we would get a convergence of $u^{\theta}$ to the signed distance function $d$,
which is known to be a solution of the eikonal equation
\begin{equation} 
|\nabla d|=1
\label{intro:eik}
\end{equation}
with the homogeneous Dirichlet boundary condition on the zero level.
The preservation of the zero level set for \eqref{intro:eqtheta} 
mainly follows from \cite{Nao-thesis}.

To justify the convergence to $d$ rigorously,
the comparison principle for the eikonal equation \eqref{intro:eik}
is used to compare the distance function and 
a half-relaxed limit of $u^{\theta}$,
which is a weak notion of the limit for a sequence of functions.
To do this, we need to know that the limit of $u^{\theta}$
also preserves the zero level set.
This is not clear, despite the fact that $u^{\theta}$ always preserves the zero level set for every $\theta>0$.
For the preservation of the zero level set by the limit, 
continuity of the distance function plays an important role.
As is known, if we fix a time, 
$d(\cdot,t)$ is a Lipschitz continuous function,
but $d$ is not continuous in general as a function of $(x,t)$.
Indeed, when the interface has an extinction point (Definition \ref{defn:Expt}),
the distance function can be discontinuous near this point.
For our problem, by constructing suitable barrier functions
it turns out that, when $d$ is continuous,
the zero level set of the half-relaxed limit of $u^{\theta}$ 
is the same as $\Gamma_t$.
Consequently, we obtain the locally uniform convergence of $u^{\theta}$ to $d$;
see Theorem \ref{Theor1} \eqref{itm:main2}.

Concerning the locally uniform convergence,
we further consider a condition which guarantees the continuity of $d$.
An important property of first order equations 
is the finite speed of propagation 
(Subsection \ref{Section:finite_propagat}), which allows us to show 
that the only way the distance function can be discontinuous is 
if points at the zero level extinct instantaneously. 
More precisely, we show  
that the distance function is continuous at $(x,t)$ if and only if 
at least one of the nearest points of $x$ to $\Gamma_t$
is a non extinction point;
see Theorem \ref{thm:cont_d} \eqref{itm:continuity_3}.
Therefore, if the latter condition is satisfied 
for every $(x,t) \in \mathbf{R}^n \times (0,T)$,
then the solutions $u^{\theta}$ of \eqref{intro:eqtheta} converge 
locally uniformly to $d$ in $\mathbf{R}^n\times (0,T)$
(Remark \ref{rem:conection_main_theorem}).
The converse is also true.

If the signed distance function $d$ is discontinuous, 
we cannot expect that the continuous solutions $u^{\theta}$ of \eqref{intro:eqtheta}
will converge locally uniformly to $d$.
In fact, when $d$ is discontinuous,
the zero level sets of the half-relaxed limit of $u^{\theta}$ 
are not $\Gamma_t$,
and this prevents us to apply the comparison principle for \eqref{intro:eik}.
We can however show (Theorem \ref{Theor1} \eqref{itm:main1}) 
a weaker notion of convergence to $d$; 
namely a convergence to $d$ from below in time as follows:
\begin{equation} 
\lim_{\begin{subarray}{c}(y,s,\theta) \to (x,t,\infty) \\ s\leq t \end{subarray}} u^{\theta}(y,s)=d(x,t) 
\quad \mbox{for all $(x,t) \in \mathbf{R}^n \times (0,T)$}. 
\label{eq:result_below}
\end{equation}
This will be shown by introducing a notion of a half-relaxed limit from below in time
and by using the fact that $d$ is continuous from below in time.
The result \eqref{eq:result_below} also implies 
a locally uniform convergence at any fixed time, that is,
$u^\theta (\cdot ,t)$ converges to $d(\cdot ,t)$ locally uniformly in $\mathbf{R}^n$ as $\theta \rightarrow \infty$;
see Theorem \ref{Theor1} \eqref{itm:main1.5}. 

In a future work we plan to introduce a numerical scheme for \eqref{intro:eqtheta}, 
where no reinitialization will be required. 
We also plan to study numerically and rigorously 
a similar method for second order equations, 
including the mean curvature flow, of the form
\[ u_t=H_1(x,t,\nabla u,\nabla ^2 u)+\theta \beta (u)h(\nabla u). \]

\paragraph{Review of the literature}
In \cite{Chopp.1993}, Chopp used a reinitialization algorithm for the mean curvature flow. 
His technique does not utilize a corrector equation, instead, 
at each stopping time he recalculates the signed distance and 
starts the evolution again with this new initial value. 
In \cite{Sethian.1999} it is mentioned that another way to reinitialize
is to compute the signed distance at each stopping time using the fast marching method.
In \cite{Sussman-Fatemi.1999} the problem of the movement of 
the zero level set which appeared in \cite{Sussman-Smereka-Osher.1994} 
is fixed by solving an extra variational problem during the iteration of the two equations. 
Sethian in \cite{Sethian.1999} suggests to use, 
instead of the reinitialization algorithm, 
the method of extended velocity described in chapter 11.

In \cite{Delfour-Zolesio.2004} a new nonlinear equation is introduced
for the evolution of open sets with thin boundary under a given velocity field. 
The solution is for every time the signed distance function to the boundary of the open set 
(called an oriented distance function in \cite{Delfour-Zolesio.2004}).
Other numerical methods for preserving the signed distance are presented in \cite{Estellers-Zosso-Lai-Osher-Thiran-Bresson.2012} and \cite{Gomes-Faugeras.1999}.

In \cite{Goto_Ishii_Ogawa}, the authors use the approximation of the mean curvature flow by the Allen-Cahn equation and they prove the convergence of an equation to the signed distance function. See also \cite{Chambolle_Novaga} for a related theory developed for anisotropic and crystalline mean curvature flow.

\paragraph{Summary}
To sum up, the contributions of this paper are:

\begin{itemize}
\item 
mathematical justification of the reinitialization procedure,

\item 
introduction of a new approximate scheme 
for the distance function of evolving interfaces, i.e.,
solving \eqref{intro:eqtheta} and taking the limit as $\theta \rightarrow \infty$,

\item 
formulation of a necessary and sufficient condition 
for the solution of the scheme \eqref{intro:eqtheta} to converge locally uniformly
to the signed distance function, 
in terms of topological changes of interfaces,

\item
discovery of a weak notion of a limit 
which gives the signed distance function even if it is discontinuous.

\end{itemize}

We also mention that

\begin{itemize}

\item 
through the rigorous analysis of the reinitialization procedure,
we retrieve the correct rescaling in time 
of the equation \eqref{intro:eqgen} in order to approximate
the signed distance function,
and thus we extend the reinitialization procedure 
to evolutions with time depending velocity fields,

\item 
lastly, the equation in \eqref{intro:eqtheta} 
with $h$ satisfying \eqref{intro:itm2} or 
more generally (as we will see later) the assumption \eqref{eq:hprop3}
admits a natural numerical scheme with a CFL condition, 
see also \cite{MR1115933} or \cite{Souganidis}. 
We plan to study this last part in a future work.
\end{itemize}

\paragraph{Organization of the paper}
In Subsection \ref{Sub:main_theorems} we state the main results, 
and in Subsection \ref{section:prepar} we present known results 
concerning a well-posedness and regularity of viscosity solutions.
Section \ref{section:main_tools} consists of main tools 
which we use in order to prove our main theorems,
namely the preservation of the zero level set (Subsection \ref{subsect:preservation}), 
construction of barrier functions (Subsection \ref{Sect:Barrier}) 
and characterization of the distance function via the eikonal equation (Subsection \ref{section_compar_eikonal}).
In Section \ref{section:converg_results} we prove convergence results 
to the signed distance function $d$.
The proof for continuous $d$ and that for discontinuous $d$ 
will be given separately.
Section \ref{Section:continuity_distance} is concerned with
continuity properties of the distance function,
and finally in Section \ref{section:homogen} 
we prove a homogenization result.

\section{Main results}\label{section:main_results}

\subsection{Main theorems}\label{Sub:main_theorems}
We study the evolution of the zero level set of a function $w$ given by the following problem:
\begin{subequations}\label{eq:init}
\begin{alignat}{2}[left=\empheqlbrace]
 & w_t = H_1(x,t,\nabla w) &\quad & \text{in }\mathbf{R}^n \times (0,T),\label{eq:initeq} \\[\medskipamount]
 & w(x,0)=u_0(x) & & \text{in } \mathbf{R}^n.\label{eq:initin}
\end{alignat}
\end{subequations}
Here $u_0$ is a possibly unbounded Lipschitz continuous function on $\mathbf{R}^n$ and its Lipschitz constant is denoted by $L_0$. The function $H_1=H_1(x,t,p):\mathbf{R}^n \times [0,T] \times \mathbf{R}^n \rightarrow \mathbf{R}$ satisfies
 \begin{enumerate}[label=(H\arabic*) ,ref=H\arabic*]
\setcounter{enumi}{0}
    \item\label{itm4} $H_1\in C(\mathbf{R}^n\times [0,T]\times \mathbf{R}^n)$,
	\item\label{itm5} $H_1(x,t,\lambda p )=\lambda H_1(x,t,p)$ for all $\lambda >0$, $x\in \mathbf{R}^n$, $t\in [0,T]$ and $p\in \mathbf{R}^n$,
	\item\label{itm6} There is a positive constant $L_1$ such that 
	$$|H_1(x,t,p)-H_1(y,t,p)|\leq L_1|x-y|$$
	for all $x,y\in \mathbf{R}^n,\, t\in [0,T]$ and $p\in \mathbf{R}^n$ with $|p|=1$,
	\item \label{itm6.1} There is a positive constant $L_2$ such that 
	\[
	|H_1(x,t,p)-H_1(x,t,q)|\leq L_2|p-q|
	\] 
	for all $x\in \mathbf{R}^n,\, t\in [0,T]$ and $p,q\in \mathbf{R}^n$,
	\item\label{itm7} $\displaystyle\sup_{\begin{subarray}{c}(x,t,p)\in \mathbf{R}^n \times [0,T]\times \mathbf{R}^n \\ |p|=1 \end{subarray}}|H_1(x,t,p)|<\infty$.
	 \end{enumerate} 
 \begin{rem}\label{main:lipest}
 In order to get a more precise estimate for the Lipschitz constant of solutions considered in this paper, we will use in Proposition \ref{prop2} instead of \eqref{itm6} the following:
 \begin{enumerate}[label=(H\arabic*-s) ,ref=H\arabic*-s]
 \setcounter{enumi}{2}
\item\label{itm6'} There is a function $D\in C([0,T])$ such that
 \[
  |H(x,t,p)-H(y,t,p)|\leq D(t)|x-y|
 \]
 for all $x,y\in \mathbf{R}^n,\, t\in [0,T]$ and $p\in \mathbf{R}^n$ with $|p|=1$.
 \end{enumerate}
 Note that the assumption \eqref{itm6'} implies the assumption \eqref{itm6}.
 \end{rem}
 \begin{rem}
 In the literature the assumption \eqref{itm6} is usually given as: There are $L,\bar{L}$ positive, such that 
 \begin{equation}\label{Genlipcond}
|H(x,t,p)-H(y,t,p)|\leq L|x-y||p|+\bar{L}|x-y|
 \end{equation}
 for all $x,y,p\in \mathbf{R}^n$ and $t\in [0,T]$.
 However, since the Hamiltonian $H$ is geometric (the assumption \eqref{itm5}), it turns out that the conditions \eqref{itm6} and \eqref{Genlipcond} are equivalent.
  Indeed, it is clear that \eqref{itm6} implies \eqref{Genlipcond} with $L=L_1$ and $\bar{L}=0$. Also, under \eqref{Genlipcond} we can easily derive \eqref{itm6} with $L_1=L+\bar{L}$. Here let us also show that, in fact, we can take $L_1=L$ in \eqref{itm6} when \eqref{Genlipcond} holds. Let $x,y\in \mathbf{R}^n$, $t\in [0,T]$, $p\in \mathbf{R}^n$ with $|p|=1$ and $r>0$. Then we have
 \[
 |H(x,t,rp)-H(y,t,rp)|\leq Lr|x-y|+\bar{L}|x-y|.
 \]
 Dividing both sides by $r$ and using \eqref{itm5}, we get
 \[
 |H(x,t,p)-H(y,t,p)|\leq L|x-y|+\bar{L}\frac{|x-y|}{r}.
 \]
 If we now take the limit as $r\rightarrow +\infty $, we get \eqref{itm6} with $L_1=L$.
  \end{rem}
 The assumption \eqref{itm5} is natural for a geometric evolution problem, while \eqref{itm6} is used for construction of barriers in Subsection \ref{Sect:Barrier} and for the proof of Lipschitz continuity of solutions in Appendix \ref{appendix}. We call the constant $L_2$ in assumption \eqref{itm6.1} the speed of propagation of the zero level set of the solutions.
 Existence, uniqueness and other properties of the problem \eqref{eq:init} can be found in Subsection \ref{section:prepar}.
 Since the zero level set of the solution $w$ of \eqref{eq:init} is the main focus of this paper we will use the following notations for $t\in [0,T)$:
 \begin{align}
 D_t ^{\pm} &:=\{ x\in \mathbf{R}^n \mid\, \pm w(x,t)\geq 0 \},\label{zeronot1}\\
 \Gamma _t &:=\{ x\in \mathbf{R}^n \mid\, w(x,t)=0\}\label{zeronot2}
 \end{align}
 and
 \[ D^+:=\bigcup_{t\in (0,T)} (D^+_t \times \{ t \}), \quad 
D^-:=\bigcup_{t\in (0,T)} (D^-_t \times \{ t \}). \]
In what follows we will always suppose that the evolution associated with $w$ is not empty, i.e.,
\begin{equation}\label{eq:sec_2_1}
\Gamma _t\neq \emptyset \text{ for all } t\in [0,T).
\end{equation}
Also for $\Omega\subset \mathbf{R}^n$ the distance function $\mathrm{dist}(\cdot ,\Omega):\mathbf{R}^n\rightarrow [0,\infty )$ is defined as
\[
\mathrm{dist}(x,\Omega):=\inf_{y \in \Omega} |x-y|.
\]
\begin{rem}
It is well-known that the distance function 
$\mathrm{dist}(\cdot, \Omega)$ is Lipschitz continuous. 
Indeed, for $x,y \in \mathbf{R}^n$ and $z \in \Omega$, we have 
\[ |y-z| \le |x-z|+|x-y| \quad \mbox{and} \quad 
|x-z| \le |y-z|+|x-y|. \]
If we take the infimum for $z \in \Omega$, 
the above inequalities become
\[ |\mathrm{dist}(x, \Omega)-\mathrm{dist}(y, \Omega)| \le |x-y|. \]
\end{rem}
 For a function $w:\mathbf{R}^n \times [0,T) \to \mathbf{R}$,
$\mathrm{Lip}_x[w]$ stands for the Lipschitz constant of $w$ with respect to $x$, i.e.,
\[ \mathrm{Lip}_x[w]
:=\sup_{\begin{subarray}{c} x,y \in \mathbf{R}^n \\ x \neq y \end{subarray}}
\sup_{t \in [0,T)}\frac{|w(x,t)-w(y,t)|}{|x-y|} 
\in [0,\infty]. \]
      \quad Our first result concerns an equation of the form
 \begin{equation}\label{eq6}
u_t ^\theta = H_1(x,t,\nabla u^\theta )+\theta H_2(u^\theta ,\nabla u^\theta )\quad \text{ in } \mathbf{R}^n\times [0,T),
\end{equation}
where $\theta >0$ is a parameter, $H_1$ is as in \eqref{eq:initeq} and 
\begin{equation}\label{eq1.0*}
H_2(r,p)=\beta (r)h(p).
\end{equation}
The function $\beta$ is assumed to satisfy
\begin{enumerate}[label=(B) ,ref=B]
	\item\label{eq1.1*} $\mathrm{Lip}[\beta ]=:L_\beta <\infty$ and $\beta$ is non-decreasing and bounded in $\mathbf{R}$
with $\beta(0)=0$, $\beta(r)>0$ if $r>0$, $\beta(r)<0$ if $r<0$,
 \end{enumerate}
where by $\mathrm{Lip}[f]$ we denote the Lipschitz constant of a function $f:\mathbf{R}^n\rightarrow \mathbf{R}$. Moreover, $h:\mathbf{R}^n\rightarrow \mathbf{R}$ is such that
\begin{equation}\label{hprop1}
\text{there is a modulus } \omega _h \text{ such that } |h(p)-h(q)|\leq \omega _h (|p-q|)\text{ for all } p,q\in \mathbf{R}^n .
\end{equation}
Here a function $\omega :[0,\infty )\rightarrow [0,\infty )$ is called a modulus if $\omega$ is non-decreasing and $0=\omega (0)=\displaystyle\lim_{r\rightarrow 0}\omega (r)$.
We will also use one of the following assumptions for the function $h$:
\begin{subequations}\label{eq:hprop2}
\begin{alignat}{2}[left=\empheqlbrace]
 &h(p)>0 &\text{ if } & |p|<1\label{eq:hprop2.1}, \\[\medskipamount]
 & h(p)<0 &\text{ if } & |p|>1 \label{eq:hprop2.2}
\end{alignat}
\end{subequations}
or 
\begin{subequations}\label{eq:hprop3}
\begin{alignat}{2}[left=\empheqlbrace]
 &h(p)>0 &\text{ if } & |p|<1\label{eq:hprop3.1}, \\[\medskipamount]
 & h(p)= 0 &\text{ if } & |p|\geq 1 \label{eq:hprop3.2}.
\end{alignat}
\end{subequations}

Examples of these functions are
\begin{align}
& H_1(x,t,p)=c(x,t)|p|,\label{eq:1}\\
& H_2(u,p)=\frac{u^2}{\sqrt{\varepsilon _0 ^2+u^2}}h(p) \label{eq:2}
\end{align}
for 
\begin{equation}\label{eq:3}
h(p)=1-|p|
\end{equation}
or
\begin{equation}\label{eq:4}
h(p)=(1-|p|)_+,
\end{equation}
where $\varepsilon _0>0$, $c$ is Lipschitz continuous with respect to $x\in\mathbf{R}^n$ uniformly in time, and for $a\in \mathbf{R}$ we denote by 
\[
a_{\pm}=\max\{ \pm a,0\}
\]
the positive and negative part of $a$.
 We see that the function $h$ defined in \eqref{eq:3} satisfies \eqref{eq:hprop2} while \eqref{eq:4} satisfies \eqref{eq:hprop3}.\par
For a function $w(x,t)$ defined in $\mathbf{R}^n\times [0,T)$, we define the signed distance function $d(x,t)$, from the zero level set of $w$, as follows:
\begin{equation}\label{eq1*}
d(x,t)=
\begin{cases} 
\mathrm{dist}(x,\Gamma _t)   & \text{ if } x\in D_t ^+\cup \Gamma _t, \\
 -\mathrm{dist}(x,\Gamma _t) & \text{ if } x\in D_t ^-. 
\end{cases}
\end{equation}
Here $D^{\pm} _t$ and $\Gamma _t$ are defined in \eqref{zeronot1} and \eqref{zeronot2}.\par 
For later use we collect our main assumptions in the following list:
\begin{equation} 
\begin{cases} 
\mbox{$u_0$ is Lipschitz continuous in $\mathbf{R}^n$, \ 
$H_1$ satisfies (H1)--(H5),} \\
\mbox{$H_2$ is of the form (2.7), \ 
$\beta$ satisfies (B), \ 
$h$ satisfies (2.8).}
\end{cases} 
\label{eq:theor1_assumpt}
\end{equation}
For the solution $u^\theta$ of \eqref{eq6} and \eqref{eq:initin} we have the following main theorem.
\begin{theo}[Convergence of $u^\theta $ to the signed distance function]\label{Theor1}
Assume \eqref{eq:theor1_assumpt} and \eqref{eq:hprop2}. Let $u^\theta$ be the solution of \eqref{eq6} and \eqref{eq:initin}. Let $d$ be the signed distance function as in \eqref{eq1*}.
  Then
\begin{enumerate}[label=(\roman*) ,ref=\roman*]
\setcounter{enumi}{0}
    \item\label{itm:main1} \[ \lim_{\begin{subarray}{c}(y,s,\theta) \to (x,t,\infty) \\ s\leq t \end{subarray}} u^{\theta}(y,s)=d(x,t) 
\quad \mbox{for all $(x,t) \in \mathbf{R}^n \times (0,T)$} , \]

\item\label{itm:main1.5} $u^\theta (\cdot ,t)$ converges to $d(\cdot ,t)$ locally uniformly in $\mathbf{R}^n$ as $\theta \rightarrow +\infty$ for all $t\in (0,T)$,

	\item\label{itm:main2}  if in addition $d(x,t)$ is continuous in $\mathbf{R}^n\times (0,T)$, then
\[
u^\theta \text{ converges to } d \text{ locally uniformly in } \mathbf{R}^n\times (0,T)\text{ as } \theta \rightarrow +\infty.
\]
 \end{enumerate} 
\end{theo}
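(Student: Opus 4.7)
The plan is to carry out the standard half-relaxed limit method of Barles--Perthame, but in a form that tracks the ``from below in time'' asymmetry forced by possible instantaneous extinctions of $\Gamma_t$. First I would establish, using Proposition~2.2 (or the Lipschitz estimates summarized in Subsection~\ref{section:prepar}/Appendix~\ref{appendix}), that the family $\{u^\theta\}_{\theta>0}$ is locally bounded and equi-Lipschitz in $x$ on $\mathbf{R}^n\times[0,T)$ with a bound independent of $\theta$; this follows because the extra term $\theta\beta(u^\theta)h(\nabla u^\theta)$ is geometric in $\nabla u^\theta$ in the sense needed to apply the abstract Lipschitz-in-$x$ estimate. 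I would then define
\[
\overline{u}^b(x,t):=\limsup_{\substack{(y,s,\theta)\to(x,t,\infty)\\ s\le t}} u^\theta(y,s),\qquad
\underline{u}^b(x,t):=\liminf_{\substack{(y,s,\theta)\to(x,t,\infty)\\ s\le t}} u^\theta(y,s),
\]
together with the unrestricted versions $\overline{u},\underline{u}$.

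The core of the proof is to identify $\overline{u}^b$ and $\underline{u}^b$ as viscosity subsolution and supersolution of the eikonal equation $|\nabla u|=1$ on appropriate regions. If $\phi$ touches $\overline{u}^b$ from above at $(x_0,t_0)$ with $\overline{u}^b(x_0,t_0)>0$, I take standard local maximizers $(x_\theta,s_\theta)\to(x_0,t_0)$ of $u^\theta-\phi$; by preservation of the zero level set for $u^\theta$ (Subsection~\ref{subsect:preservation}), $u^\theta(x_\theta,s_\theta)\ge \delta>0$ for all large $\theta$, hence $\beta(u^\theta(x_\theta,s_\theta))\ge c>0$. The viscosity inequality
\[
\phi_t(x_\theta,s_\theta) \le H_1(x_\theta,s_\theta,\nabla\phi(x_\theta,s_\theta)) + \theta\,\beta\bigl(u^\theta(x_\theta,s_\theta)\bigr) h(\nabla\phi(x_\theta,s_\theta))
\]
then forces $h(\nabla\phi(x_0,t_0))\le 0$, which by \eqref{eq:hprop2.1} yields $|\nabla\phi(x_0,t_0)|\ge 1$. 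An analogous argument on $\{\underline{u}^b<0\}$, and the symmetric supersolution tests combined with \eqref{eq:hprop2.2}, give the other eikonal inequalities.

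Next I would pin down the zero level sets of the limits. The identity $\{u^\theta(\cdot,t)=0\}=\Gamma_t$ is preserved along $\theta$, but in the full limit it need not survive past an extinction time; however, in the one-sided-in-time limits it does, because of the barrier construction of Subsection~\ref{Sect:Barrier}: the barriers control $u^\theta(\cdot,s)$ for $s\le t$ by Lipschitz functions vanishing on $\Gamma_s$ with speed of propagation bounded by $L_2$, so $\overline{u}^b(x,t)=\underline{u}^b(x,t)=0$ on $\Gamma_t$ and the signs of $\overline{u}^b,\underline{u}^b$ are controlled on $D^\pm_t$. Invoking the comparison characterization of the signed distance function via $|\nabla d|=1$ with Dirichlet data $d=0$ on $\Gamma$ (Subsection~\ref{section_compar_eikonal}), together with continuity of $d$ from below in time (Section~\ref{Section:continuity_distance}), I conclude $\overline{u}^b=\underline{u}^b=d$ pointwise, which is exactly \eqref{itm:main1}. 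Part \eqref{itm:main1.5} then follows from \eqref{itm:main1} applied at $s=t$ together with the equi-Lipschitz-in-$x$ bound via an Arzel\`a--Ascoli argument at each fixed time. For part \eqref{itm:main2}, when $d$ is continuous, the restriction $s\le t$ plays no role: the same viscosity and barrier arguments give $\overline{u}=\underline{u}=d$ in the full sense, upgrading pointwise to locally uniform convergence in $\mathbf{R}^n\times(0,T)$.

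The main obstacle is the interplay in Step~3 between the preservation of the zero level set and the asymmetry in time: the bound $\beta(u^\theta(x_\theta,s_\theta))\ge c>0$ used in the subsolution test must persist along sequences with $s_\theta\le t_0$, which is precisely where one needs the barriers from Subsection~\ref{Sect:Barrier} to exclude collapse of the positive region onto $(x_0,t_0)$ from the past. The whole point of introducing $\overline{u}^b,\underline{u}^b$ (rather than $\overline{u},\underline{u}$) is to accommodate extinction points, at which the standard half-relaxed limit can jump above $d$; so the technical delicacy lies in redoing the Barles--Perthame comparison with one-sided half-relaxed limits, crucially using the one-sided continuity of $d$ in time.
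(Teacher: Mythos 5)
Your proposal follows essentially the same route as the paper: equi-Lipschitz bound independent of $\theta$, one-sided-in-time half-relaxed limits (the paper calls them $\overline{u}'$ and $\underline{u}'$), barrier functions to control the zero level set of the limits, the eikonal comparison of Lemma~\ref{lem:CP_eik}, and continuity of $d$ from below in time. So the architecture matches Proposition~\ref{prop:sub_fixed_t} and Theorem~\ref{thm:mainconv_discont}. However, there are two problems, one small and one substantial.

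The small one is a sign error in the subsolution step. Testing $\overline{u}^b$ from above gives the viscosity inequality $\phi_t\le H_1+\theta\beta(u^\theta)h(\nabla\phi)$; dividing by $\theta$ and sending $\theta\to\infty$ yields $0\le\beta(\overline{u}^b)h(\nabla\phi)$, and since $\beta(\overline{u}^b)>0$ this forces $h(\nabla\phi)\ge 0$, not $\le 0$. Then \eqref{eq:hprop2.2} (not \eqref{eq:hprop2.1}) gives $|\nabla\phi|\le 1$, the subsolution inequality, not $|\nabla\phi|\ge 1$ as you wrote. As stated, your conclusion is the supersolution inequality, which is the wrong direction to feed into Lemma~\ref{lem:CP_eik}\eqref{item:1side_dist}.

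The substantial gap is that the step ``take standard local maximizers $(x_\theta,s_\theta)\to(x_0,t_0)$ of $u^\theta-\phi$'' hides exactly the difficulty the one-sided limits create. Since $\overline{u}^b$ only sees $s\le t_0$, you must maximize $u^\theta-\phi$ over $\mathbf{R}^n\times(0,t_0]$; the maximizer can then sit on the terminal slice $s=t_0$, where the interior viscosity inequality is not available. You also need the maximizing times to actually converge to $t_0$ and, simultaneously, $u^{\theta_j}(x_j,s_j)\to\overline{u}^b(x_0,t_0)$ so that $\beta$ stays bounded away from zero in the limit. None of these follow from a generic $C^1$ test function. This is precisely what the paper's Proposition~\ref{prop:sub_fixed_t} handles by (a) using the perturbed test function $\phi^\theta(x,t)=\psi(x)-\sqrt{\theta}(t-\hat{t})$, whose $\sqrt{\theta}$-drift pins the maximizer to the terminal time while vanishing after dividing by $\theta$, and by (b) invoking the extension of the viscosity property up to the terminal time from \cite{Chen_Giga_Goto}. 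Your sketch does not mention either ingredient, and you instead flag the wrong ``main obstacle'' (the lower bound $\beta\ge c$, which is the easy part via \eqref{eq:duubar_gen} and the barriers). Also, a minor point: the equi-Lipschitz-in-$x$ bound on $u^\theta$ independent of $\theta$ does not follow from any geometricity of the term $\theta\beta(r)h(p)$ (it is not positively homogeneous in $p$); it follows from the sign structure $h\le 0$ on $\{|p|\ge 1\}$ together with the monotonicity of $\beta$, as in the appendix proof of Proposition~\ref{prop2}.
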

In general, if the signed distance function $d$ is discontinuous, we cannot expect that the continuous functions $u^\theta$ will converge to $d$ locally uniformly. The following example shows that the signed distance function can be discontinuous when points of the zero level set disappear instantaneously.
We will denote by $B_r(x)$ the open ball of radius $r>0$ centered at $x$. Its closure is $\overline{B_r(x)}$. Also, $\langle \cdot ,\cdot \rangle$ stands for the standard Euclidean inner product.
\begin{ex}[A single discontinuity]\label{exmp:sdisc}
We study \eqref{eq:init} with
\begin{equation}\label{eq:exmpH_1}
H_1(x,p)=c(x)|p|,
\end{equation}
where $c\in \mathrm{Lip}(\mathbf{R}^n)$ is bounded and non-negative.
Since $H_1$ is written as $H_1(x,p)=\displaystyle\max_{a \in \overline{B_1(0)}} \langle c(x)a,p \rangle$,
the viscosity solution $w$ of \eqref{eq:init}
has a representation formula as a value function 
of the associated optimal control problem (\cite[Section 10]{Evans_Book}),
which is of the form
\begin{equation} 
w(x,t)=\sup_{\alpha \in \mathcal{A}} 
u_0(X^{\alpha}(t)).
\label{eq:rep_opt}
\end{equation}
Here $\mathcal{A}:=\{ \alpha:[0,T) \to \overline{B_1(0)}, \ \mbox{measurable} \}$
and $X^{\alpha}:[0,T) \to \mathbf{R}^n$ is the solution of the state equation
\[ (X^{\alpha})'(s)=c(X^{\alpha}(s)) \alpha (s) \quad \mbox{in} \ (0,T),
\quad X^{\alpha}(0)=x. \]
Each element $\alpha \in \mathcal{A}$ is called a control.

We now consider the case where $c(x)=1$.
This describes a phenomenon where the interface expands at a uniform speed 1.
In this case the optimal control forces the state $X^{\alpha}(\cdot)$
to move towards the maximum point of $u_0$ in $\overline{B_t(x)}$,
and hence
\begin{equation} 
w(x,t)=\max_{|x-y|\leq t} u_0(y).
\label{eq:rep_opt_max}
\end{equation}
Take the initial datum as $u_0(x)=\max\{ (1-|x-2|)_+, \ (1-|x+2|)_+ \}$.
The formula \eqref{eq:rep_opt_max} now implies
\[ w(x,t)=\min\{ \max\{ (t+1-|x-2|)_+, \ (t+1-|x+2|)_+ \}, 1 \}. \]
 To see this we notice that
\[
\max_{|x-y|\leq t}(1-|y\pm 2|)_+=\min\{(t+1-|x\pm 2|)_+,1\},
\]
then using the formula \eqref{eq:rep_opt_max} and after changing the order of the maxima, we calculate
\begin{align*}
w(x,t) &= \max\{ \min\{(t+1-|x- 2|)_+,1\},\min\{(t+1-|x+ 2|)_+,1\}\\
&= \min\{ \max\{ (t+1-|x-2|)_+, \ (t+1-|x+2|)_+ \}, 1 \} .
\end{align*}
Here we have used the relation
$\max\{ \min\{ a_1, \ b \} \ \min\{ a_2, \ b \} \}
=\min\{ \max\{ a_1, \ a_2 \}, \ b \}$
for $a_1,a_2,b \in \mathbf{R}$.
We therefore have
\[ \{ w=0 \}=\begin{cases} 
\{ |x|\geq t+3 \} \cup \{ |x|\leq 1-t \} & \mbox{if} \ t\leq 1, \\ \{ |x|\geq t+3 \} & \mbox{if} \ t>1 \end{cases} \]
and 
\[ d(x,t)=\begin{cases} 
\max\{ (t+1-|x-2|)_+, \ (t+1-|x+2|)_+ \} & \mbox{if} \ t\leq 1, \\ (t+3-|x|)_+ & \mbox{if} \ t>1. \end{cases} \]
See Figure \ref{fig:d4_discd}. Thus $d$ is discontinuous on $\ell :=\{ (x,1) \ | \ -2<|x|<2 \}$;
more precisely, 
$d$ is not upper semicontinuous but lower semicontinuous on $\ell$.


\begin{figure}[htbp]
\begin{center}
\includegraphics{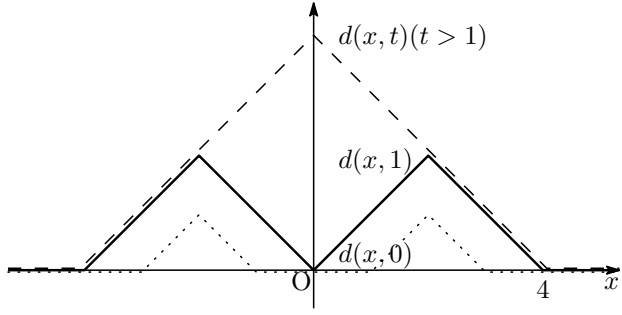}
\end{center}
\caption{The graph of $d$.}
\label{fig:d4_discd}
\end{figure}

\end{ex}

If $h$ satisfies \eqref{eq:hprop3} we can still estimate from one side the limit with the distance function. More precisely we have the following theorem.
\begin{theo}\label{Theo1.1}
Assume \eqref{eq:theor1_assumpt} and \eqref{eq:hprop3}. Then
\begin{align*}
&& d(x,t)&\leq \sup_{\theta >0} u^{\theta}(x,t)<+\infty
&\mbox{for all} \ x \in D^+_t, \\
&& d(x,t)&\geq \inf_{\theta >0} u^{\theta}(x,t)>-\infty
&\mbox{for all} \ x \in D^-_t. 
\end{align*}
\end{theo}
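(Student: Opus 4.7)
The key structural fact is that \eqref{eq:hprop3.2} gives $h(p)=0$ for $|p|\ge1$, so any smooth barrier $\Phi$ with $|\nabla\Phi|\ge1$ annihilates the $\theta$-dependent term $\theta\beta(\Phi)h(\nabla\Phi)$ in \eqref{eq6}. Such $\Phi$ is a super- (respectively sub-) solution of \eqref{eq6} for \emph{every} $\theta>0$ as soon as the inequality $\Phi_t\ge H_1(x,t,\nabla\Phi)$ (respectively $\le$) holds. Comparison against such barriers will then give bounds uniform in $\theta$, which is what the theorem requires.

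For the finiteness halves ($\sup_\theta u^\theta<+\infty$ on $D^+_t$ and $\inf_\theta u^\theta>-\infty$ on $D^-_t$), I would fix $(x_0,t_0)$, set $K:=\max\{1,L_0\}$, and let $M_1$ be the bound in \eqref{itm7}. The conical functions
\[
\Psi^\pm(x,t):=u_0(x_0)\pm K\bigl(|x-x_0|+M_1 t\bigr)
\]
satisfy $|\nabla\Psi^\pm|=K\ge 1$ and hence $h(\nabla\Psi^\pm)=0$. Homogeneity \eqref{itm5} combined with \eqref{itm7} yields $\Psi^+_t=KM_1\ge H_1(x,t,\nabla\Psi^+)$ and, symmetrically, $\Psi^-_t\le H_1(x,t,\nabla\Psi^-)$. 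The $L_0$-Lipschitz bound on $u_0$ together with $K\ge L_0$ gives $\Psi^-(\cdot,0)\le u_0\le\Psi^+(\cdot,0)$. Applying the comparison principle recalled in Subsection \ref{section:prepar} to the pairs $(\Psi^\pm,u^\theta)$ yields
\[
u_0(x_0)-KM_1 t_0\;\le\;u^\theta(x_0,t_0)\;\le\;u_0(x_0)+KM_1 t_0 \qquad\text{for every }\theta>0,
\]
so both families $\sup_\theta u^\theta(x_0,t_0)$ and $\inf_\theta u^\theta(x_0,t_0)$ are finite.

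For the one-sided comparison with $d$, by sign symmetry it suffices to establish $d\le\sup_\theta u^\theta$ on $D^+$. Fix $(x_0,t_0)\in D^+$ with $r:=d(x_0,t_0)>0$ and $\eta\in(0,r)$. The plan is to construct, for every $\theta$ sufficiently large, a subsolution $\psi_\theta$ of \eqref{eq6} such that $\psi_\theta(\cdot,0)\le u_0$ and $\psi_\theta(x_0,t_0)\ge r-\eta$; comparison then gives $u^\theta(x_0,t_0)\ge r-\eta$, whence $\sup_\theta u^\theta(x_0,t_0)\ge r-\eta$, and sending $\eta\to0$ finishes the proof. The appropriate $\psi_\theta$ is obtained from the barrier machinery of Subsection \ref{Sect:Barrier}: a cone centered at a nearest point $y_0\in\Gamma_{t_0}$ of $x_0$, traced back in time at the finite propagation speed $L_2$ from \eqref{itm6.1} so that its support at $t=0$ lies in a region where $\psi_\theta(\cdot,0)\le u_0$ is automatic. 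On the steep part $|\nabla\psi_\theta|\ge1$ the $\theta$-term is silent by \eqref{eq:hprop3.2}, and the subsolution test reduces to $\psi_{\theta,t}\le H_1$; on $\{\psi_\theta>0\}\cap\{|\nabla\psi_\theta|<1\}$ the \emph{positive} term $\theta\beta(\psi_\theta)h(\nabla\psi_\theta)$ (by \eqref{eq:hprop3.1} and \eqref{eq1.1*}) grows with $\theta$ and absorbs the error, allowing the barrier to rise to height $r-\eta$ at $(x_0,t_0)$.

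The difficult step is the subsolution construction in the last paragraph: it must simultaneously (i) stay below the possibly negative, merely Lipschitz initial datum $u_0$, (ii) account for the temporal motion of the interface $\Gamma_t$ through the propagation-speed bound $L_2$, and (iii) attain the sharp height $d(x_0,t_0)-\eta$ at $(x_0,t_0)$ in the limit $\theta\to\infty$. The finiteness step, by contrast, is immediate once the role of the gradient threshold $|\nabla\Phi|\ge1$ is recognised and a single cone $\Psi^\pm$ is exhibited.
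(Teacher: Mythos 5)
Your finiteness argument breaks down at the vertex of the cone. For $\Psi^+(x,t)=u_0(x_0)+K(|x-x_0|+M_1t)$ to be a viscosity supersolution of \eqref{eq6} you must verify the inequality for every $(p,\tau)\in\mathcal{J}^-\Psi^+(x_0,s)$, and at the vertex $x=x_0$ the cone has a strict spatial minimum, so its subdifferential contains \emph{all} $p$ with $|p|\le K$ — in particular $p=0$, where $h(0)>0$ by \eqref{eq:hprop3.1}. The required inequality there reads $KM_1\ge H_1(x_0,s,0)+\theta\,\beta(\Psi^+(x_0,s))\,h(0)=\theta\,\beta(\Psi^+(x_0,s))\,h(0)$, which fails for large $\theta$ whenever $\Psi^+(x_0,s)>0$. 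The symmetric problem occurs for $\Psi^-$ where it is negative. The observation ``$|\nabla\Phi|\ge1$ silences the $\theta$-term'' is only valid where the gradient genuinely exists or where the relevant semidifferential contains no short vectors; a cone vertex is exactly where it does. The paper avoids this by using $l e^{L_1t}d_+$ as the upper barrier (Proposition \ref{prop:Bar_dist}): since $d$ is a supersolution of the eikonal equation, its subdifferential in $\{d>0\}$ contains only $p$ with $|p|\ge1$. Alternatively, the $\theta$-independent Lipschitz bound of Proposition \ref{prop2} gives finiteness directly (Remark \ref{alt_proof}).

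The more serious gap is that the inequality $d\le\sup_{\theta}u^{\theta}$ on $D^+$ — the actual content of the theorem — is never proved: you describe what a subsolution $\psi_\theta$ ought to do and yourself label its construction ``the difficult step,'' but you do not carry it out, and it is far from routine (the barrier must start below a merely Lipschitz, possibly sign-changing $u_0$, track the moving interface, and still rise to height $d(x_0,t_0)-\eta$ by exploiting the $\theta\beta h$ term only where $|\nabla\psi_\theta|<1$). The paper takes an entirely different and much shorter route that bypasses any such construction: since $h\ge0$ under \eqref{eq:hprop3}, the solutions are monotone in $\theta$ on $D^+$ (Proposition \ref{prop:mono}), so $\sup_{\theta>0}u^{\theta}$ coincides with the lower half-relaxed limit $\underline{u}$ there; stability of viscosity solutions under half-relaxed limits shows $\underline{u}(\cdot,t)$ is a supersolution of $|\nabla u|=1$ in $D^+_t$, it is positive there by the lower barrier $\varepsilon w$ (Propositions \ref{prop:Bar_coer} and \ref{prop:ZLofRLofu}), and the one-sided comparison for the eikonal equation (Lemma \ref{lem:CP_eik}~\eqref{item:2side_dist}) then yields $d\le\underline{u}$ in $D^+$ (Lemma \ref{Lemma:comparison_discont}~\eqref{item2}). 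If you want to keep a direct barrier proof you must actually exhibit $\psi_\theta$ and verify the subsolution property across the transition region $\{|\nabla\psi_\theta|<1\}$; as written, the proposal establishes neither half of the theorem.
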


For the next result we define
\begin{equation}\label{eq:combined_hamilt}
H_{12}(x,t,\tau ,r,p):=\begin{cases}
H_1(x,\frac{t}{1+\frac{k_2}{k_1}},p)
& \mbox{if} \ (i-1) < \tau \leq  (i-1) + \frac{k_1 \Delta t}{\varepsilon}, \\
H_2(r,p)
& \mbox{if} \ (i-1)+ \frac{k_1 \Delta t}{\varepsilon} < \tau \leq  i
\end{cases}
\end{equation}
for $k_1,k_2>0$, $\Delta t>0$, $\varepsilon =(k_1+k_2)\Delta t$ and $i=1,...,\lceil \frac{T}{\varepsilon}\rceil$.
By definition $H_{12}$ is 1-periodic in $\tau $, and
in general it is discontinuous in $\tau $.
In summary, we are led to the following equation:
\begin{equation}\label{eq:HJe}
 u^\varepsilon _t  = H_{12}\left(  x,t,\frac{t}{\varepsilon},u^\varepsilon ,\nabla u^\varepsilon \right)\quad \text{ in } \mathbf{R}^n\times (0,T).
\end{equation}
\begin{rem}\label{rem:iterative_sol}\label{Rem:iterative_sol}
 A solution of the problem \eqref{eq:HJe}, \eqref{eq:initin} can be constructed by solving \eqref{eq:HJe} in the intervals $[\varepsilon (i-1),\varepsilon (i-1)+k_1 \Delta t),\, [\varepsilon (i-1)+k_1 \Delta t,\varepsilon i)$, $i=1,...,\lceil \frac{T}{\varepsilon}\rceil$, iteratively, using as initial condition at each interval, the final value of the solution defined in the previous interval. We call this solution an {\em iterative} solution.
\end{rem}
Let $\theta=k_2/k_1$. We define 
\begin{equation*}
\bar{H}(x,t,r,p)=\frac{1}{1+\theta}\left( H_1\left( x,\frac{t}{1+\theta},p \right)+\theta H_2(r,p)\right) 
\end{equation*}
and consider the equation
\begin{equation}\label{eq:mean_hmilt_eq}
\bar{u}_t ^\theta =\bar{H}(x,t,\bar{u}^\theta ,\nabla \bar{u}^\theta )\quad \text{ in } \mathbf{R}^n \times (0,T).
\end{equation}
\begin{theo}[Homogenization]\label{Theor2}
Assume \eqref{eq:theor1_assumpt}. Let $\bar{u}^\theta$ and $u^\varepsilon$ be, respectively, the solution of \eqref{eq:mean_hmilt_eq}, \eqref{eq:initin} and the iterative solution of \eqref{eq:HJe}, \eqref{eq:initin}. Then $u^\varepsilon$ converges to $\bar{u}^\theta$ locally uniformly in $\mathbf{R}^n \times [0,T)$.
\end{theo}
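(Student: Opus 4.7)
The approach is the perturbed test function method in the half-relaxed limit framework, exploiting the fact that the only oscillation in \eqref{eq:HJe} is in the fast time variable $\tau=t/\varepsilon$ and that $\bar H$ is by construction the $\tau$-mean of $H_{12}$. By the well-posedness and Lipschitz estimates recalled in Subsection \ref{section:prepar} applied to the iterative construction of Remark \ref{rem:iterative_sol}, the family $\{u^\varepsilon\}_{\varepsilon>0}$ is locally uniformly bounded and equi-Lipschitz in $x$ on $\mathbf{R}^n\times[0,T)$. Therefore the upper and lower half-relaxed limits $\bar u$ and $\underline u$ are well-defined bounded functions with $\underline u\le \bar u$ everywhere, and both coincide with $u_0$ at $t=0$ by a barrier argument in the spirit of Subsection \ref{Sect:Barrier}.

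The main step is to prove that $\bar u$ is a viscosity subsolution and $\underline u$ a supersolution of \eqref{eq:mean_hmilt_eq}. Since there is no spatial oscillation, the cell problem at a frozen state $(x_0,t_0,r_0,p_0)$ is the one-dimensional ODE
\[ v'(\tau)=H_{12}(x_0,t_0,\tau,r_0,p_0)-\lambda,\qquad \tau\in\mathbf{R}, \]
which admits a $1$-periodic solution if and only if $\lambda=\int_0^1 H_{12}(x_0,t_0,\tau,r_0,p_0)\,d\tau$; by the definition \eqref{eq:combined_hamilt} of $H_{12}$ this integral equals $\bar H(x_0,t_0,r_0,p_0)$. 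The corrector
\[ v(\tau):=\int_0^\tau \bigl(H_{12}(x_0,t_0,s,r_0,p_0)-\bar H(x_0,t_0,r_0,p_0)\bigr)\,ds \]
is bounded on $\mathbf{R}$ and piecewise $C^1$ in $\tau$. Given $\phi\in C^1$ such that $\bar u-\phi$ attains a strict local maximum at some $(x_0,t_0)\in\mathbf{R}^n\times(0,T)$, set $p_0=\nabla\phi(x_0,t_0)$ and $r_0=\bar u(x_0,t_0)$, and define the perturbed test function $\phi^\varepsilon(x,t):=\phi(x,t)+\varepsilon v(t/\varepsilon)$. Since $\varepsilon v(t/\varepsilon)\to 0$ uniformly, standard arguments produce maximizers $(x_\varepsilon,t_\varepsilon)$ of $u^\varepsilon-\phi^\varepsilon$ converging to $(x_0,t_0)$ with $u^\varepsilon(x_\varepsilon,t_\varepsilon)\to r_0$. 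On any open subinterval where $H_{12}$ is continuous in $t$, the iterative solution $u^\varepsilon$ solves a standard viscosity problem, and the subsolution inequality becomes
\[ \phi_t(x_\varepsilon,t_\varepsilon)+v'(t_\varepsilon/\varepsilon)\le H_{12}\!\left(x_\varepsilon,t_\varepsilon,\tfrac{t_\varepsilon}{\varepsilon},u^\varepsilon(x_\varepsilon,t_\varepsilon),\nabla\phi(x_\varepsilon,t_\varepsilon)\right). \]
Substituting the definition of $v'$ and rearranging yields
\[ \phi_t(x_\varepsilon,t_\varepsilon)\le \bar H(x_0,t_0,r_0,p_0)+\Delta_\varepsilon, \]
where $\Delta_\varepsilon$ is a difference of $H_{12}$-values at the frozen fast variable $\tau=t_\varepsilon/\varepsilon$. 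In either regime ($H_1$-interval or $H_2$-interval) $H_{12}$ is continuous in $(x,t,r,p)$ uniformly in $\tau$, so $\Delta_\varepsilon\to 0$, which produces the subsolution inequality for $\bar u$ at $(x_0,t_0)$. The supersolution case is completely symmetric.

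Finally, $\bar H$ inherits from $H_1$ and $H_2$ the structural hypotheses (continuity, geometricity \eqref{itm5}, Lipschitz control in $x$ and $p$) needed for the comparison principle recalled in Subsection \ref{section:prepar} to apply to \eqref{eq:mean_hmilt_eq}. Since $\bar u$ and $\underline u$ share the initial value $u_0$, comparison gives $\bar u\le \bar u^\theta\le\underline u$; together with $\underline u\le\bar u$ this forces equality and hence the locally uniform convergence $u^\varepsilon\to \bar u^\theta$ on $\mathbf{R}^n\times[0,T)$. The main technical obstacle is the time discontinuity of $H_{12}$ at the interface instants $\varepsilon(i-1)+k_1\Delta t$ and $\varepsilon i$: \emph{a priori} a maximizer $(x_\varepsilon,t_\varepsilon)$ could land on a jump time where the usual viscosity inequality is unavailable. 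I would handle this either by an arbitrarily small smooth perturbation of $\phi$ that displaces $t_\varepsilon$ into the interior of a continuity subinterval, or by using upper/lower semicontinuous envelopes of $H_{12}$ in $\tau$ together with the iterative (one-sided) formulation of $u^\varepsilon$ at the jumps; in either case the Lipschitz structure makes the correction negligible as $\varepsilon\to 0$.
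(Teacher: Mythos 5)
Your overall strategy is the same as the paper's: half-relaxed limits, the one-dimensional cell problem in the fast time variable with eigenvalue $\lambda=\int_0^1 H_{12}\,d\tau=\bar H$, the perturbed test function $\phi^\varepsilon=\phi+\varepsilon v(t/\varepsilon)$, and comparison for \eqref{eq:mean_hmilt_eq} to conclude $\overline{u}=\underline{u}=\bar u^\theta$. The substance of the theorem, however, lies exactly in the point you defer to the last sentence: the discontinuity of $H_{12}$ in $\tau$, and neither of your two proposed fixes is adequate as stated. First, the corrector $v(\tau)=\int_0^\tau(H_{12}-\bar H)\,ds$ is only Lipschitz, not $C^1$, at the switching times, so $\phi^\varepsilon$ is not an admissible test function there; your plan of writing the viscosity inequality directly at a maximizer $(x_\varepsilon,t_\varepsilon)$ of $u^\varepsilon-\phi^\varepsilon$ breaks down whenever $t_\varepsilon/\varepsilon$ hits a kink, and a ``small smooth perturbation of $\phi$'' does not obviously displace the maximizer off the (finite but $\varepsilon$-dependent) jump set, since maximizers do not vary continuously with the perturbation. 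Second, the route via semicontinuous envelopes of $H_{12}$ in $\tau$ is precisely the one the paper explicitly rules out at the start of Subsection \ref{subsect:Discontin_Hamilt}: with the enveloped equation one cannot control the gap between $(H_{12})^*$ and $(H_{12})_*$, so the limit inequality degenerates.

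The paper's resolution is worth comparing with. It (a) proves that the iterative solution is a viscosity solution \emph{without star} and that a local comparison principle holds for \eqref{eq:HJe} across the jump times, using left accessibility (Theorem \ref{thm:withoutstar}, Remark \ref{rem:comparison_iterartively}); (b) formulates the cell problem for $L^1$ Hamiltonians with the one-sided averages $H^{\#},H_{\#}$ (Lemma \ref{lem:Cellpb}, Remark \ref{rem:piewh}), so that $v$ is a supersolution in a sense robust to its kinks; and (c) runs Evans' argument in the contradiction form: assuming $\hat\phi_t\ge\bar H+\mu$, it shows $\phi^\varepsilon-\eta$ is a \emph{supersolution} of \eqref{eq:HJe} on a small cylinder --- a property tested only from below by smooth $\psi$, so the non-differentiability of $v$ is harmless --- and then invokes the local comparison principle to contradict the strict maximum. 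If you replace your direct-evaluation step by this contradiction-plus-comparison step, and supply the ``without star'' well-posedness of the iterative solution, your proof closes; without those ingredients the argument has a genuine gap at the switching times.
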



\begin{rem}\label{rem2}
If we set $u^\theta (x,t)=\bar{u}^\theta (x,(1+\theta )t)$ in Theorem \ref{Theor2}, then $u^\theta$ solves the equation \eqref{eq6} and satisfies the initial data \eqref{eq:initin}.
\end{rem}
\begin{rem}
All of our main theorems have the same assumptions 
on $u_0$ and $H_1$. 
For this reason, we will assume \eqref{eq:theor1_assumpt}
in the rest of the paper except Subsection \ref{subsect:preservation}, 
where $u_0$ will be generalized.
For the function $h$ in \eqref{eq1.0*} we will differentiate the assumptions 
\eqref{eq:hprop2} and \eqref{eq:hprop3}. 
Finally we will state clearly whether or not 
the distance function $d$ is continuous.
\end{rem}

\subsection{Theorems from the literature}\label{section:prepar}

In this subsection we will present a comparison principle for general equations of the form
 \begin{equation}\label{eq5}
 u_t  =F(x,t,u,\nabla u) \quad \text{ in } \mathbf{R}^n \times (0,T).
\end{equation}

Let us introduce a notion of viscosity solutions. For this purpose, we first define semicontinuous envelopes of functions. Let $K\subset \mathbf{R}^n$. For a function $f:K\rightarrow \mathbf{R}$ we denote the upper and lower semicontinuous envelopes by $f^*$ and $f_*:\overline{K}\rightarrow \mathbf{R}\cup \{\pm \infty \}$ respectively, which are as follows:
\[
f^* (z):= \limsup_{y\rightarrow z} f(y) =\lim_{\delta\rightarrow 0} \sup \{ f(y)\mid y\in B_\delta (z)\cap K\} ,
\]
\[
f_* (z):= \liminf_{y\rightarrow z} f(y) =\lim_{\delta\rightarrow 0} \inf \{ f(y)\mid y\in B_\delta (z)\cap K\}.
\]
\begin{defi}[Viscosity Solution]\label{defi1}
We say that $u:\mathbf{R}^n\times [0,T)\rightarrow \mathbf{R}$ is a {\em viscosity subsolution} (resp. a {\em supersolution}) of \eqref{eq5} if $u^* <+\infty $(resp. $u_* >-\infty$) and if
 \[
 \phi _t\leq F^* (x_0,t_0,\phi ,\nabla\phi )\quad (\text{resp. } \phi _t\geq F_* (x_0,t_0,\phi ,\nabla\phi )) \quad \text{ at } P_0=(x_0,t_0)
 \]
whenever
\begin{equation}\label{eq5.5}
\left\{
\begin{aligned}
& u^* \leq \phi &\text{ on }& B_{r_0}(P_0)\\
& u^*=\phi  & \text{ at } & P_0
\end{aligned}
\right.
\quad \left( \text{resp. } \left\{
\begin{aligned}
& u_* \geq \phi &\text{ on }& B_{r_0}(P_0)\\
& u_*=\phi  & \text{ at } & P_0
\end{aligned}
\right. \right)
\end{equation}
for $\phi \in C^1(\mathbf{R}^n \times (0,T))$, $P_0\in \mathbf{R}^n \times (0,T)$ and $r>0$ such that $B_r(P_0)\subset \mathbf{R}^n \times (0,T)$.
\end{defi}
Since we already use the notation $D^{\pm}$, we are going to use the symbol $\mathcal{J}^{\pm}$ for the {\em subdifferential} respectively for the {\em superdifferential} of a function.
More precisely for a function $u : \mathbf{R}^n \times (0,T) \to \mathbf{R}$
we define a {\em superdifferential} $\mathcal{J}^+ u(z,s)$ 
of $u$ at $(z,s) \in \mathbf{R}^n \times (0,T)$ by
\begin{equation} 
\mathcal{J}^+ u(z,s):=
\left\{ ( p,\tau )\in \mathbf{R}^n \times \mathbf{R} \ \left| 
\begin{array}{c} 
\exists \phi \in C^1 (\mathbf{R}^n \times (0,T)) \ \mbox{such that} \\ 
(p,\tau)=(\nabla \phi,\partial_t \phi)(z,s) \ \mbox{and} \\
\displaystyle\max_{\mathbf{R}^n \times (0,T)}(u-\phi)=(u-\phi)(z,s) 
\end{array} \right. \right\}. 
\label{eq:DefofSupDiff}
\end{equation}
A {\em subdifferential} $\mathcal{J}^- u(z,s)$ is defined 
by replacing ``$\max$" by ``$\min$" in \eqref{eq:DefofSupDiff}. Equivalently, we say that a function
$u : \mathbf{R}^n \times (0,T) \to \mathbf{R}$
is a {\em viscosity subsolution} (resp. {\em viscosity supersolution})
of \eqref{eq5} if 
\[ \tau \leq F^*(z,s,u^*(z,s),p) 
 \ \mbox{(resp. $\tau \geq F_*(z,s,u_*(z,s),p) $ )} \]
for all $(z,s) \in \mathbf{R}^n \times (0,T)$
and $(p,\tau) \in \mathcal{J}^+ u^*(z,s)$
(resp. $(p,\tau) \in \mathcal{J}^- u_*(z,s)$).\par
In order to guarantee the well-posedness of the problem \eqref{eq5} and \eqref{eq:initin}, the following assumptions are usually imposed on the function $F:\mathbf{R}^n\times [0,T]\times \mathbf{R}\times \mathbf{R}^n\rightarrow \mathbf{R}$.

\begin{enumerate}[label=(F\arabic*) ,ref=F\arabic*]
	\item\label{itmF1} $F\in C(\mathbf{R}^n \times [0,T]\times \mathbf{R}\times \mathbf{R}^n)$,
	\item\label{itmF2} There is an $a_0 \in \mathbf{R}$ such that $r\mapsto F(x,t,r,p)-a_0 r$ is non-increasing on $\mathbf{R}$ for all $(x,t,p)\in \mathbf{R}^n \times [0,T]\times\mathbf{R}^n$,
	\item\label{itmF3} For $R\geq 0$ there is a modulus $\omega _R$ such that
	\[
	|F(x,t,r,p)-F(x,t,r,q)|\leq \omega _R(|p-q|)
	\]
	for all $(x,t,r,p,q)\in \mathbf{R}^n \times [0,T]\times \mathbf{R}\times \mathbf{R}^n\times \mathbf{R}^n$, with $|p|,|q|\leq R$,
	\item\label{itmF4} There is a modulus $\omega$ such that
	\[
	|F(x,t,r,p)-F(y,t,r,p)|\leq \omega (|x-y|(1+|p|))
	\]
	for all $(x,y,t,r,p)\in \mathbf{R}^n\times\mathbf{R}^n \times [0,T]\times \mathbf{R}\times \mathbf{R}^n$.
	 \end{enumerate}

For the convenience of the reader we will state the comparison principle and sketch its proof for the problem \eqref{eq5} and \eqref{eq:initin}. For a detailed proof, see \cite[Theorem 4.1]{MR1119185}.

\begin{pro}[Comparison pinciple]\label{prop3}
Assume that $F$ satisfies \eqref{itmF1}-\eqref{itmF4}. Let $u,v$ be a viscosity subsolution and supersolution respectively of \eqref{eq5} and assume that they satisfy
\begin{enumerate}[label=(A\arabic*) ,ref=A\arabic*]
	\item\label{itmA1} $u^*(x,0)\leq v_*(x,0)$ for all $x\in \mathbf{R}^n$,
	\item\label{eq8} there is a constant $K>0$ such that we have on $\mathbf{R}^n\times (0,T)$
	\[
	u(x,t)\leq  K(1+|x|),\quad v(x,t)\geq  -K(1+|x|),
	\]
	\item\label{itmA3} there is a constant $\tilde{K}>0$ such that for $x,y\in\mathbf{R}^n$ we have
	\[
	u^* (x,0)-v_*(y,0)\leq \tilde{K}|x-y|.
	\]
	 \end{enumerate}
Then
\[
u^*\leq v_*\, \text{ in } \mathbf{R}^n\times [0,T).
\]
\end{pro}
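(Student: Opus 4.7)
The plan is the standard doubling-variables argument of Crandall--Ishii--Lions, adapted to the unbounded spatial domain and to the possibly non-dissipative monotonicity in (F2). First I would reduce to the case $a_0=0$ in (F2) via the exponential change of unknowns $\tilde u(x,t):=e^{-a_0 t}u(x,t)$, $\tilde v(x,t):=e^{-a_0 t}v(x,t)$. The transformed functions solve an equation with modified Hamiltonian $\tilde F$ for which $r\mapsto\tilde F(x,t,r,p)$ is genuinely nonincreasing; the hypotheses (F1), (F3), (F4), the linear growth (A2) and the initial Lipschitz bound (A3) are preserved up to constants depending on $e^{|a_0|T}$, so it is enough to prove $\tilde u^*\le \tilde v_*$.

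I would argue by contradiction: suppose $M:=\sup_{\mathbf{R}^n\times[0,T)}(\tilde u^*-\tilde v_*)>0$. For small parameters $\varepsilon,\delta,\alpha,\sigma>0$ introduce
\[
\Phi(x,y,t,s) := \tilde u^*(x,t) - \tilde v_*(y,s) - \frac{|x-y|^2}{2\varepsilon} - \frac{(t-s)^2}{2\delta} - \alpha\bigl(\sqrt{1+|x|^2}+\sqrt{1+|y|^2}\bigr) - \frac{\sigma}{T-t}.
\]
The linear growth from (A2) is dominated by the $\alpha$-penalty at infinity and the singular $-\sigma/(T-t)$ term pushes the supremum away from $t=T$, so $\Phi$ attains a maximum at some $(x_0,y_0,t_0,s_0)$ with $\Phi>M/2$ for suitably small parameters. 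Standard penalization lemmas then yield $|x_0-y_0|^2/\varepsilon\to 0$, $(t_0-s_0)^2/\delta\to 0$ and $\alpha\sqrt{1+|x_0|^2}\to 0$ in the iterated limits $\alpha\to 0$ then $\varepsilon,\delta\to 0$.

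Next I would rule out the initial-time boundary: if $t_0=0$ or $s_0=0$, hypotheses (A1) and (A3), combined with the quadratic penalty $|x-y|^2/(2\varepsilon)$, force $\Phi(x_0,y_0,t_0,s_0)\to 0$, contradicting $\Phi>M/2$. Hence the maximum is interior, and I apply the viscosity subsolution condition for $\tilde u^*$ and the supersolution condition for $\tilde v_*$ with test functions built from the smooth summands of $\Phi$. Subtracting the two inequalities and using the monotonicity in $r$ (available after the exponential reduction, since $\tilde u^*(x_0,t_0)>\tilde v_*(y_0,s_0)$ at the max) to discard the zeroth-order term, (F3) to absorb the $\alpha$-correction in the gradients $p^{\pm}=(x_0-y_0)/\varepsilon\pm\alpha\,\nabla\sqrt{1+|\cdot|^2}$, and (F4) to absorb the $x$-dependence through the modulus $\omega(|x-y|(1+|p|))$, yields
\[
\frac{\sigma}{(T-t_0)^2} \le \omega\Bigl(|x_0-y_0|+\frac{|x_0-y_0|^2}{\varepsilon}+O(\alpha)\Bigr) + \omega_R(O(\alpha)),
\]
with $R$ controlled by $\varepsilon$ and $\alpha$.

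The crucial and most delicate point, which I expect to be the main obstacle, is the scaling $|x_0-y_0|\,|p^{\pm}|\sim |x_0-y_0|^2/\varepsilon\to 0$: this is precisely why the \emph{quadratic} form of the doubling term is matched with the \emph{linear-in-}$|p|$ modulus in (F4). Sending $\alpha\to 0$ first, then $\varepsilon,\delta\to 0$, collapses the right-hand side to $0$ while the left-hand side stays at least $\sigma/T^2>0$, producing the desired contradiction. The technical burden is the careful bookkeeping of these iterated limits and verifying that the maximum point $(x_0,y_0,t_0,s_0)$ stays in a compact set bounded away from $t=T$; the preliminary exponential reduction is essential to avoid a sign conflict in (F2) at the subtraction step.
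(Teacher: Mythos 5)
The overall blueprint is the paper's --- exponential reduction, doubling of variables with a penalty at infinity and a $1/(T-t)$ term, viscosity inequalities at the maximum, contradiction via (F3)/(F4) --- but there is a genuine gap at the very first step of the contradiction argument, and a key auxiliary estimate is elided.

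Your penalty $\alpha\bigl(\sqrt{1+|x|^2}+\sqrt{1+|y|^2}\bigr)$ grows only linearly in $|x|,|y|$, while hypothesis (A2) permits $u^*(x,t)-v_*(y,s)$ to grow like $K(|x|+|y|)$. For $\alpha<K$ this linear penalty does not dominate, the quadratic coupling $|x-y|^2/(2\varepsilon)$ only ties $x$ to $y$ and is useless against radial growth, and consequently $\sup\Phi=+\infty$ for $\alpha$ small --- precisely the regime in which you want to pass to the limit. Your claim that ``the linear growth from (A2) is dominated by the $\alpha$-penalty at infinity'' is therefore false. The paper avoids this by using the quadratic penalty $\alpha(|x|^2+|y|^2)$, but even that choice forces a further estimate: the gradient correction $2\alpha x_\sigma$ appearing in the test functions is not obviously small, and controlling it requires the a priori bound $\alpha|x_\sigma|,\alpha|y_\sigma|\le\sqrt{\alpha C}$. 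That bound in turn rests on the non-trivial preliminary estimate
\[
u(x,t)-v(y,t)\le C_T(1+|x-y|),
\]
which bounds the difference by $|x-y|$ rather than by $|x|+|y|$ and must itself be proved by an auxiliary doubling argument starting from (A3); the paper explicitly flags this as a necessary ingredient (citing \cite[Proposition $2.3'$]{MR1119185}). Your proposal neither establishes nor invokes this estimate, relying instead on unspecified ``standard penalization lemmas'' --- but in the unbounded, linear-growth setting of this proposition those lemmas are exactly what needs to be reproved. The ``refined'' estimate $\lim_{\varepsilon\to0}\limsup_{\eta,\alpha\to0}|x_\sigma-y_\sigma|^2/\varepsilon=0$, which the paper singles out and references to \cite[Proposition 4.4]{MR1119185}, depends on the same preliminary control and is likewise asserted rather than justified. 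Once you replace the penalty by the quadratic one and insert the $C_T(1+|x-y|)$ estimate, the rest of your outline (including discarding the zeroth-order term via monotonicity after the exponential reduction, and the handling of the $t=0$ boundary via (A1) and (A3)) matches the paper's argument.
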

\begin{proof}
Since the comparison principal is more or less classical we will only give a sketch of the proof.\par 
1.
We may suppose without loss of generality that $u,v$ are upper, respectively, lower semicontinuous. As usual we set
\[
\tilde{u}(\cdot ,t)=e^{-a t}u(\cdot ,t),\, \tilde{v}(\cdot ,t)=e^{-a t}v(\cdot ,t),
\]
where $a>a_0$ and $a_0$ is given by \eqref{itmF2}. Using the notation $u,v$ instead of $\tilde{u},\tilde{v}$, we have that $u,v$ are sub- and supersolutions of the equation
\begin{equation*}
u_t+(a-a_0)u=\tilde{F}(x,t,u,\nabla u),
\end{equation*}
where $\tilde{F}(x,t,r,p)$ is non-increasing in $r$ and satisfies the conditions \eqref{itmF1}-\eqref{itmF4}. As before we denote by $F$ the new $\tilde{F}$. \par 
2.
Suppose that
\[
M=\sup_{(x,t)\in \mathbf{R}^n \times [0,T)}(u(x,t)-v(x,t))>0.
\]
We now make the usual doubling of variables trick and define for $\varepsilon ,\eta ,\alpha >0$ 
\[
\Phi _\sigma (x,t,y)=u(x,t)-v(y,t)-\frac{|x-y|^2}{\varepsilon}-\frac{\eta}{T-t}-\alpha (|x|^2+|y|^2)
\]
and
\[
M_\sigma =\sup_{x,y\in \mathbf{R}^n,\, t\in [0,T)}\Phi _\sigma (x,y,t),
\]
where $\sigma =(\varepsilon ,\eta ,\alpha )$. As usual we have $0<M_\sigma <+\infty$. In order to proceed we need a priori bounds on the maximum $M_\sigma$ and to do that we need to be able to control the difference $u(x,t)-v(y,t)$ by the modulus $|x-y|$. One can show (using a doubling of variables trick, see for example \cite[Proposition $2.3'$]{MR1119185}) that there is a constant $C_T>0$ such that
\[
u(x,t)-v(y,t)\leq C_T(1+|x-y|).
\]
Using this estimate we can show that there is $C>0$ such that
\[
\alpha |x|,\alpha |y|\leq \sqrt{\alpha C}.
\]
The above estimate together with \eqref{itmA3} enables us to find $x_\sigma ,y_\sigma \in \mathbf{R}^n$ and $t_\sigma \in (0,T)$ such that $\Phi _\sigma (x_\sigma ,y_\sigma ,t_\sigma )=M_\sigma$.\par 
For the term $|x_\sigma -y_\sigma |^2 /\varepsilon$, we will need a more refined estimate than the classical one, namely we need
\begin{equation}\label{eq:refined_estimate}
\lim_{\varepsilon\rightarrow 0}\left( \limsup_{\eta ,\alpha\rightarrow 0 }\frac{|x_\sigma -y_\sigma |^2}{\varepsilon }\right)=0.
\end{equation}
A proof of a similar estimate can be found in \cite[Proposition 4.4]{MR1119185}.\par 
3.
Doubling the variables again in time or using a similar argument as in \cite[Lemma 2]{Crandall_Lions}, we have for $p_\varepsilon =2(x_\sigma -y_\sigma )/\varepsilon$ 
\begin{align*}
\frac{\eta}{T^2}+(a-a_0)(u(x_\sigma ,t_\sigma )-& v(y_\sigma ,t_\sigma ))\\
&\leq F(x_\sigma ,t_\sigma ,u(x_\sigma ,t_\sigma ) ,p_\varepsilon +2\alpha x_\sigma )-F(y_\sigma ,t_\sigma ,v(y_\sigma ,t_\sigma ),p_\varepsilon -2\alpha y_\sigma  ).
\end{align*}
As in \cite[Proposition 2.4]{MR1119185} there is a $\delta >0$ independent of $\sigma$, such that $u(x_\sigma ,t_\sigma )-v(y_\sigma ,t_\sigma )>\delta$.
Using properties of $F$, \eqref{itmF3} and \eqref{itmF4}, one gets
\[
\frac{\eta}{T^2}+(a-a_0)\delta \leq \omega (|x_\sigma -y_\sigma |(1+|p_\varepsilon +2\alpha x_\sigma |))+\omega _R(2\alpha (|x_\sigma |+|y_\sigma |)),
\]
where $R=o(1/\sqrt{\varepsilon})+o(\sqrt{\alpha})$ as $\varepsilon ,\alpha \rightarrow 0$. Using \eqref{eq:refined_estimate}, we can take the limit first as $\alpha ,\eta \rightarrow 0$ and then as $\varepsilon\rightarrow 0$ to get a contradiction.
\end{proof}
Combining Proposition \ref{prop3} with Perron's method we get the following theorem.
\begin{theo}[Existence/Uniqueness]\label{the2}
Assume \eqref{eq:theor1_assumpt}. Then for all $\theta >0$, there exists a unique solution $u=u^\theta\in C(\mathbf{R}^n\times [0,T))$ of the problem \eqref{eq6} and \eqref{eq:initin} with
\[
u^{\mathit{low}}\leq u\leq u^\mathit{up} \quad \text{ in } \mathbf{R}^n \times [0,T),
\]
where $u^\mathit{up}(x,t)=u_0(x)+Kt,\, u^\mathit{low}(x,t)=u_0(x)-Kt$, for some $K>0$, are a viscosity supersolution and subsolution respectively of the same problem.
\end{theo}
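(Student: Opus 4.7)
The plan is to invoke Perron's method in combination with the comparison principle of Proposition \ref{prop3} applied to $F(x,t,r,p) := H_1(x,t,p) + \theta \beta(r) h(p)$.

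First, I would construct the explicit barriers. To check that $u^\mathit{up}(x,t) := u_0(x) + Kt$ is a viscosity supersolution for $K$ large, let $\phi \in C^1$ touch $u^\mathit{up}$ from below at some $(x_0,t_0)$. Then $\phi_t(x_0,t_0) = K$ and $\nabla \phi(x_0,t_0)$ belongs to the subdifferential of $u_0$ at $x_0$, so $|\nabla \phi(x_0,t_0)| \leq L_0$. By \eqref{itm5} and \eqref{itm7}, $|H_1(x_0,t_0,\nabla\phi)| \leq L_0 M_1$ with $M_1 := \sup_{|p|=1}|H_1|$; by \eqref{eq1.1*} and continuity of $h$ from \eqref{hprop1}, $|\theta\beta(u^\mathit{up})h(\nabla\phi)| \leq \theta \|\beta\|_\infty M_h$ where $M_h := \sup_{|p| \le L_0}|h(p)|$. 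Choosing $K \geq L_0 M_1 + \theta \|\beta\|_\infty M_h$ gives the supersolution inequality. The subsolution property of $u^\mathit{low}$ is symmetric.

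Second, I would verify that $F$ satisfies the structural hypotheses of Proposition \ref{prop3}. Continuity (F1) is immediate. Condition (F3) follows from (H4) for $H_1$ and the modulus \eqref{hprop1} for $h$, together with boundedness of $\beta$. Condition (F4) follows directly from (H3) for $H_1$, since the corrector term $\theta\beta(r)h(p)$ is independent of $x$. The delicate point is (F2): because $h$ need not be bounded on $\mathbf{R}^n$, the map $r \mapsto \theta\beta(r)h(p) - a_0 r$ cannot be made non-increasing by a single constant $a_0$. The workaround is to restrict to a bounded set of gradients: since solutions (and in particular test functions that exhibit sub/superdifferentials of the envelopes) inherit a Lipschitz estimate $|p| \leq L$ from the barriers and from the Lipschitz estimate of Proposition \ref{prop2} (proved in the appendix), one may replace $h$ outside $\overline{B_L(0)}$ by a bounded extension without altering the set of solutions; for this modified $F$, condition (F2) holds with $a_0 = \theta L_\beta \sup_{|p|\le L}|h(p)|$.

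Third, I would run the Perron procedure. Set
\[ u(x,t) := \sup\{ v(x,t) \mid v \text{ is a subsolution of \eqref{eq6}}, \ u^\mathit{low} \leq v \leq u^\mathit{up} \}. \]
The family is nonempty (it contains $u^\mathit{low}$), and Ishii's envelope argument shows $u^*$ is a subsolution and $u_*$ a supersolution of \eqref{eq6}, sandwiched by the barriers. At $t=0$, $u^*(x,0) = u_*(x,0) = u_0(x)$ because $u^\mathit{low}(x,0) = u^\mathit{up}(x,0) = u_0(x)$; assumption (A2) follows from the linear growth of $u_0$ and the barriers, while (A3) follows with $\tilde K = L_0$ from the Lipschitz continuity of $u_0$. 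Proposition \ref{prop3} applied to $(u^*, u_*)$ then gives $u^* \leq u_*$; since the reverse inequality is automatic, $u = u^* = u_*$ is continuous and solves \eqref{eq6} with the required bounds. Uniqueness is a second application of Proposition \ref{prop3} to any two solutions.

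The main obstacle is checking (F2) in the presence of a possibly unbounded $h$; everything else is routine. I expect this is precisely where the a priori Lipschitz estimate of Proposition \ref{prop2} (developed in the appendix) must be invoked to localize the problem to bounded gradients.
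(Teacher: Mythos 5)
Your proposal follows the same route as the paper: the paper's proof consists precisely of the barrier computation you give (with the same choice $K\geq CL_0+\theta\max_{|p|\leq L_0}|h(p)|M$, and the same remark that for non-smooth $u_0$ one works with elements of the sub/superdifferentials), after which it defers to Perron's method and the comparison principle of Proposition \ref{prop3}. So the core of your argument matches the paper's.

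The one place where your write-up goes astray is the verification of \eqref{itmF2}. Your proposed workaround is circular as stated: Proposition \ref{prop2} is an a priori estimate on the \emph{solution}, whose existence is exactly what Perron's method is meant to produce, and the generic subsolutions appearing in the Perron family need not be Lipschitz, so you cannot modify $h$ outside a ball ``without altering the set of solutions.'' The correct and much simpler observation is that, since $\beta$ is non-decreasing with $\mathrm{Lip}[\beta]=L_\beta$, the map $r\mapsto \theta\beta(r)h(p)-a_0r$ is non-increasing as soon as $a_0\geq \theta L_\beta\, \sup_p h_+(p)$; only the \emph{positive part} of $h$ enters. Under either structural assumption \eqref{eq:hprop2} or \eqref{eq:hprop3} (and in all the examples), $h_+$ is supported in $\overline{B_1(0)}$ and is bounded there by \eqref{hprop1}, so \eqref{itmF2} holds globally with $a_0=\theta L_\beta \max_{|p|\leq 1} h_+(p)$ — this is exactly how the paper handles the analogous point in Subsection \ref{subsect:preservation}, where the corrector term $G$ is required to be bounded from above. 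With that correction the rest of your argument (nonemptiness of the Perron family, the initial-time and growth conditions (A1)--(A3), and uniqueness by a second application of Proposition \ref{prop3}) is fine.
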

\begin{proof}
We will only show that $u_0\pm Kt$ are a subsolution and a supersolution of \eqref{eq6} for some $K>0$, large enough depending on $\theta$, since for the rest of the proof we can use a Perron's argument, see for example \cite{Ishii.Perron}. We first suppose that $u_0$ is smooth. Then by the Lipschitz continuity of $u_0$ we have $|\nabla u_0|\leq L_0$. By assumptions \eqref{itm5} and \eqref{itm7}, there is a constant $C>0$ such that $|H_1(x,t,\nabla u_0)|\leq CL_0$. Also, since $h$ is continuous and $\beta$ is bounded by \eqref{eq1.1*}, there is a constant $M>0$ such that
\[
|H_2(u_0,\nabla u_0)|=\theta |\beta (u_0)h(\nabla u_0)|\leq \theta \max_{|p|\leq L_0}|h(p)|M.
\]
 Finally, if we choose $K>0$ such that $K \geq CL_0+\theta \displaystyle \max_{|p|\leq L_0}|h(p)|M$, we get the desired result. For the case where $u_0$ is not smooth, we use the same argument for elements of the super- and subdifferential of $u_0$.
\end{proof}
The following proposition is proved in Appendix \ref{appendix}. Define 
\[ L(t):=\max \{ L_0,1\} e^{\int_0^t D(s)\, ds }. \]
\begin{pro}[Lipschitz continuity of solutions]\label{prop2}
Under the assumptions of Proposition \ref{prop3}, with $H_1$ satisfying assumption \eqref{itm6'} instead of \eqref{itm6}, the solution $u$ of the problem \eqref{eq6} and \eqref{eq:initin} satisfies
\[
|u(x,t)-u(y,t)|\leq L(t)|x-y|\quad \text{  for all } x,y\in \mathbf{R}^n\text{ and } t\in [0,T).
\]
\end{pro}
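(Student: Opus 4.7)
The strategy is to adapt the doubling-of-variables technique from the proof of Proposition \ref{prop3} so as to produce the time-dependent Lipschitz estimate. For parameters $\varepsilon,\eta,\alpha>0$ I would consider the auxiliary function
\[ \Phi(x,y,t):=u(x,t)-u(y,t)-L(t)\sqrt{|x-y|^{2}+\varepsilon^{2}}-\frac{\eta}{T-t}-\alpha(|x|^{2}+|y|^{2}) \]
and argue by contradiction, supposing $\sup\Phi>0$. The quadratic penalty, together with the at-most linear growth of $u$ provided by Theorem \ref{the2}, ensures that the supremum is attained at some $(x_\sigma,y_\sigma,t_\sigma)$. The $L_0$-Lipschitz continuity of $u_0$ combined with $L(0)=\max\{L_0,1\}\geq L_0$ gives $\Phi(x,y,0)\leq L_0\varepsilon-\eta/T$, which is negative for $\eta>L_0\varepsilon T$, so $t_\sigma>0$.

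Next, I would invoke the parabolic Crandall-Ishii lemma to extract $(\tau_1,p_1)\in\mathcal{J}^+u(x_\sigma,t_\sigma)$ and $(\tau_2,p_2)\in\mathcal{J}^-u(y_\sigma,t_\sigma)$ with
\[ \tau_1-\tau_2=L'(t_\sigma)\sqrt{|x_\sigma-y_\sigma|^{2}+\varepsilon^{2}}+\frac{\eta}{(T-t_\sigma)^{2}}, \]
\[ p_1=L(t_\sigma)\frac{x_\sigma-y_\sigma}{\sqrt{|x_\sigma-y_\sigma|^{2}+\varepsilon^{2}}}+2\alpha x_\sigma,\qquad p_2=L(t_\sigma)\frac{x_\sigma-y_\sigma}{\sqrt{|x_\sigma-y_\sigma|^{2}+\varepsilon^{2}}}-2\alpha y_\sigma, \]
so $|p_1|,|p_2|\leq L(t_\sigma)+2\alpha(|x_\sigma|+|y_\sigma|)$. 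Subtracting the viscosity inequalities for \eqref{eq6} and estimating the $H_1$-difference via the refined hypothesis \eqref{itm6'}, the homogeneity \eqref{itm5}, and the Lipschitz property \eqref{itm6.1}, one obtains
\[ H_1(x_\sigma,t_\sigma,p_1)-H_1(y_\sigma,t_\sigma,p_2)\leq D(t_\sigma)L(t_\sigma)\sqrt{|x_\sigma-y_\sigma|^{2}+\varepsilon^{2}}+o(1) \]
as $\alpha\to 0$. Since $L'(t_\sigma)=D(t_\sigma)L(t_\sigma)$ by definition of $L$, the leading $H_1$-term cancels exactly with $L'(t_\sigma)\sqrt{\cdots}$ on the left, reducing the inequality to
\[ \frac{\eta}{(T-t_\sigma)^{2}}\leq\theta\bigl[\beta(u(x_\sigma,t_\sigma))h(p_1)-\beta(u(y_\sigma,t_\sigma))h(p_2)\bigr]+o(1). \]

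The principal obstacle is taming the remaining $\theta H_2$-contribution, since the asserted Lipschitz constant $L(t)$ does not depend on $\theta$. Splitting
\[ \beta(u(x_\sigma))h(p_1)-\beta(u(y_\sigma))h(p_2)=\beta(u(x_\sigma))[h(p_1)-h(p_2)]+h(p_2)[\beta(u(x_\sigma))-\beta(u(y_\sigma))], \]
the first summand is dominated by $\|\beta\|_\infty\omega_h(|p_1-p_2|)$, which vanishes as $\alpha\to 0$ by \eqref{hprop1}. The second summand is the hard piece, since naive bounds only give $O(\theta)$. To close the argument one must exploit (a) the monotonicity of $\beta$ from \eqref{eq1.1*}, which forces $\beta(u(x_\sigma))\geq\beta(u(y_\sigma))$ at a positive maximum, (b) the fact that $H_2$ is independent of $x$, so no $D(t)$-driven blow-up enters through the penalty, and (c) the barrier bound $|u(x,t)-u_0(x)|\leq Kt$ from Theorem \ref{the2}, giving $|u(x_\sigma)-u(y_\sigma)|\leq L_0|x_\sigma-y_\sigma|+2Kt_\sigma$. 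Orchestrating the limits $\alpha\to 0$, $\eta\to 0$, and $\varepsilon\to 0$, possibly through a refined penalty or a sign analysis on $h(p_2)$ tailored to the structure of $H_2$, should drive this residual term to $o(1)$, yielding the desired contradiction $0<\eta/(T-t_\sigma)^{2}\leq o(1)$; this delicate balance is the main technical content of the appendix proof.
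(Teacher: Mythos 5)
Your overall strategy coincides with the paper's (doubling of variables, an exponentially growing Lipschitz weight chosen to solve $L'=DL$ so that the $H_1$-contribution cancels, quadratic localization with the a priori bound $\alpha|x_\sigma|,\alpha|y_\sigma|\le\sqrt{\alpha C_T}$), but you stop exactly where the real work begins: you defer the estimate of the $\theta$-term to an unspecified ``sign analysis'', and that term does not tend to $0$ --- it has to be shown to be \emph{nonpositive}. The paper's mechanism is to decompose the $H_2$-difference as $\theta\{\beta(u(x_\sigma,t_\sigma))-\beta(u(y_\sigma,t_\sigma))\}h(p_1)+\theta\beta(u(y_\sigma,t_\sigma))\{h(p_1)-h(p_2)\}$: the second piece is $O\bigl(\theta\,\omega_h(2\alpha|x_\sigma+y_\sigma|)\bigr)\to0$ as $\alpha\to0$ for fixed $\theta$ (by \eqref{hprop1} and boundedness of $\beta$), while the first piece is a product of a nonnegative factor (monotonicity of $\beta$ together with $u(x_\sigma,t_\sigma)\ge u(y_\sigma,t_\sigma)$ at the positive maximum --- the half you did identify) and a nonpositive factor, because $|p_1|\ge1$ and $h\le0$ on $\{|p|\ge1\}$ by \eqref{eq:hprop2} or \eqref{eq:hprop3}. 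This is precisely why the weight is $\max\{L_0,1\}e^{\int_0^t D}$ rather than $L_0e^{\int_0^t D}$: it must be at least $1$ so that the gradient of the penalty lands in the region where $h\le0$. Your proposal never isolates this point.

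Worse, two choices in your setup actively obstruct that step. First, the smoothed penalty $L(t)\sqrt{|x-y|^2+\varepsilon^2}$ gives gradients of size roughly $L(t)\,|x_\sigma-y_\sigma|/\sqrt{|x_\sigma-y_\sigma|^2+\varepsilon^2}$, which is arbitrarily close to $0$ when $|x_\sigma-y_\sigma|\ll\varepsilon$; then $h(p_2)$ can be near $h(0)>0$ while $\beta(u(x_\sigma,t_\sigma))-\beta(u(y_\sigma,t_\sigma))$ is only bounded by $L_\beta C_T$, so your ``hard piece'' really is $O(\theta)$ with no way to remove it. The paper keeps the kink $|x-y|$ and checks $x_\sigma\ne y_\sigma$, so the penalty is smooth at the maximum and $|p_\sigma|=1$ exactly. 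Second, even with the kink, the perturbations $+2\alpha x_\sigma$ and $-2\alpha y_\sigma$ can push $|p_i|$ slightly below $1$; the paper inflates the weight to $L_\alpha(t)\ge1+2\sqrt{\alpha C_T}$ and adds $\mu(t)=2\sqrt{\alpha C_T}\,D(t)$ to the exponent to absorb the corresponding surplus in the $H_1$-estimate --- which also replaces your unproved claim that $D(t_\sigma)|x_\sigma-y_\sigma|\cdot2\alpha|x_\sigma|$ is $o(1)$, since boundedness of $|x_\sigma-y_\sigma|$ is not free here. As written, the proposal therefore has a genuine gap at its central step.
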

\begin{rem}
The Lipschitz continuity of the solution of \eqref{eq:init} will be used in the next section
to show that the solution $u^{\theta}$ of \eqref{eq6} gives the same zero level set as \eqref{eq:init}
and that there exist barrier functions of $u^{\theta}$  independent of $\theta$.
There, the Lipschitz constant is allowed to depend on the terminal time $T$.
It is well-known that, if $H_1$ is {\em coercive}, i.e.,
\[ H_1(x,t,p) \to \infty \quad 
\mbox{as $|p| \to \infty$ uniformly in $(x,t)$}, \]
then the solution is Lipschitz continuous and
its Lipschitz constant does not depend on $T$.
See, e.g., \cite{Barles.1990}.
Since such independence of $T$ is not needed for our study,
we do not require $H_1$ to be coercive in this paper.
\end{rem}
One important property of geometric equations \eqref{eq:initeq} is the
invariance under the change of dependent variables.
This invariance property as well as the comparison principle
play a crucial role for the proof of uniqueness of evolutions.

\begin{theo}[Invariance]\label{thm:InvLSM}
Let $\theta : \mathbf{R} \to \mathbf{R}$ 
be a nondecreasing and upper semicontinuous (resp. lower semicontinuous) function.
If $w$ is a viscosity subsolution (resp. supersolution) of \eqref{eq:initeq},
then $\theta \circ w$ is a viscosity subsolution (resp. supersolution) of \eqref{eq:initeq}.
\end{theo}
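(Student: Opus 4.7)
The two cases are formally symmetric, so I plan to focus on the subsolution statement; the supersolution case follows by dualizing (swap USC for LSC and upper half-relaxed limits for lower ones, and approximate $\theta$ from below instead of above). My plan is a three-step reduction: (a) establish the statement when $\theta \in C^1(\mathbf{R})$ is strictly increasing; (b) extend to any $\theta \in C^1(\mathbf{R})$ that is merely nondecreasing by an $\varepsilon$-perturbation; (c) extend to a general nondecreasing USC $\theta$ by approximation from above, concluding with the Barles--Perthame stability theorem for half-relaxed limits.

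In step (a), set $v := \theta \circ w$ and let $\varphi \in C^1(\mathbf{R}^n \times (0,T))$ be such that $v^* - \varphi$ attains a local maximum at $P_0 = (x_0,t_0)$. Using the continuity and strict monotonicity of $\theta$, one first checks that $v^* = \theta \circ w^*$, so that $\psi := \theta^{-1} \circ \varphi$ is well-defined and $C^1$ near $P_0$, and $w^* - \psi$ attains a local maximum at $P_0$. The subsolution property of $w$ then gives $\psi_t(P_0) \leq H_1(x_0, t_0, \nabla \psi(P_0))$. Differentiating $\varphi = \theta \circ \psi$ yields $\varphi_t = \theta'(\psi)\psi_t$ and $\nabla \varphi = \theta'(\psi) \nabla \psi$; since $\theta'(\psi(P_0)) > 0$, the geometricity \eqref{itm5} provides
\[ \varphi_t(P_0) \leq \theta'(\psi(P_0)) \, H_1(x_0, t_0, \nabla \psi(P_0)) = H_1(x_0, t_0, \theta'(\psi(P_0)) \nabla \psi(P_0)) = H_1(x_0, t_0, \nabla \varphi(P_0)), \]
which is exactly the required subsolution inequality at $P_0$.

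For step (b), I would replace $\theta$ by $\theta_\varepsilon(r) := \theta(r) + \varepsilon r$, which is $C^1$ and strictly increasing, apply step (a) to $\theta_\varepsilon \circ w$, and pass to the limit $\varepsilon \to 0^+$ using the local uniform convergence $\theta_\varepsilon \circ w \to \theta \circ w$ together with the stability of subsolutions under uniform convergence. For step (c), I construct a sequence of smooth nondecreasing $\theta_n \downarrow \theta$ pointwise via an inf-convolution followed by a monotonicity-preserving mollification; step (b) gives that each $v_n := \theta_n \circ w$ is a subsolution of \eqref{eq:initeq}, and the Barles--Perthame stability theorem applied to the half-relaxed upper limit $\overline{v} := \limsup{}^{*} v_n$ yields that $\overline{v}$ is again a subsolution. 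The main technical obstacle will be in step (c): verifying that $\overline{v}$ agrees with $v^* = (\theta \circ w)^*$, which is where the upper semicontinuity of $\theta$ enters essentially, through the fact that $\theta_n \circ w^* \downarrow \theta \circ w^*$ and that $\theta \circ w^*$ is USC as the composition of a nondecreasing USC function with an USC function. Once this identification is established, $v^*$ is a subsolution, hence so is $v = \theta \circ w$, completing the proof.
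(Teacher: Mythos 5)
The paper offers no proof of this theorem; it simply cites \cite[Theorem 4.2.1]{Giga.See}. Your three-step reduction is a standard strategy for this invariance result, and step~(a) --- the chain-rule computation $\varphi_t=\theta'(\psi)\psi_t$, $\nabla\varphi=\theta'(\psi)\nabla\psi$ combined with the positive $1$-homogeneity \eqref{itm5} of $H_1$ --- is exactly the mechanism that makes the result work. So the overall approach is sound.

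That said, a few points need tightening before the argument is complete. First, in step~(b) the perturbation $\theta_\varepsilon(r)=\theta(r)+\varepsilon r$ does not converge to $\theta$ locally uniformly after composition with $w$ unless $w$ is locally bounded, which a subsolution need not be from below; a bounded perturbation such as $\theta_\varepsilon(r)=\theta(r)+\varepsilon\arctan r$ (still $C^1$ and strictly increasing) fixes this. Second, the Lipschitz regularization $\theta_n(r)=\sup_s\{\theta(s)-n|r-s|\}$ you have in mind in step~(c) is identically $+\infty$ whenever $\theta$ grows superlinearly; one should first truncate $\theta$, which is harmless because the test-function argument is local in the value $w^*(P_0)$. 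Third, and most importantly, the identification $\overline v=(\theta\circ w)^*$ at the end of step~(c) implicitly conflates $(\theta\circ w)^*$ with $\theta\circ w^*$, and these genuinely differ when $w$ is not upper semicontinuous: if $w(P_0)<w^*(P_0)$ and the only sequences realizing $w^*(P_0)$ approach it from below while $\theta$ jumps there, then $(\theta\circ w)^*(P_0)<\theta(w^*(P_0))$. The remedy is to replace $w$ by $w^*$ at the very start (legitimate, since the definition of subsolution only involves $w^*$); then $\theta\circ w$ is itself USC (composition of a nondecreasing USC function with an USC function), so $(\theta\circ w)^*=\theta\circ w=\theta\circ w^*$, and your Dini-type half-relaxed-limit argument --- which also uses the right-continuity of $\theta$, automatic from USC plus nondecreasing --- yields $\overline v=\theta\circ w$ as needed. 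With these adjustments, the proof goes through.
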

See \cite[Theorem 4.2.1]{Giga.See} for the proof.


\begin{rem}\label{rem:inv_char}
As a simple consequence of the invariance property,
we see that, when $w$ is a solution of \eqref{eq:initeq},
the characteristic function on $D^+_t$(see \eqref{zeronot1}) defined as 
\[ \chi_{D^+_t}(x)=\begin{cases} 1 & \text{ if } x\in D_t ^+, \\
0 & \text{ if } x\notin D_t ^+ \end{cases} \]
is a supersolution of \eqref{eq:initeq}
since it is written as $\chi_{D^+_t}(x)=\chi_{(0,\infty)}(x) \circ w$.
Similarly, $\chi_{D^+_t \cup \Gamma_t}(x)$ is a subsolution of \eqref{eq:initeq}.
\end{rem}

It is known that the evolution of the interface 
$\{ \Gamma_t \}_{t \in (0,T)}$ associated with \eqref{eq:initeq} 
is independent of a choice of the initial data $u_0$.
In other words, if the zero levels of initial data are the same,
then those of the solutions are also the same.
See \cite[Section 4.2.3 and 4.2.4]{Giga.See}
for the detailed statement and its proof.
\section{Main tools}\label{section:main_tools}
\subsection{Preservation of the zero level set}\label{subsect:preservation}

We believe that the preservation of the zero level set is by itself a useful result. For this reason we present it in a more general framework than the one we are going to apply it for the proof of our main results.

We study a general equation of the form
\begin{equation}
u_t=H_1(x,t,\nabla u)+\beta(u)G(x,\nabla u)
\quad \mbox{in} \ \mathbf{R}^n \times (0,T).
\label{eq:GenHGeq}
\end{equation}
The function $H_1$ satisfies \eqref{itm4}-\eqref{itm7}. For the function $\beta$ we assume that \eqref{eq1.1*} is true, and for 
$G:\mathbf{R}^n \times \mathbf{R}^n \to \mathbf{R}$ we assume

\begin{enumerate}[label=(G) ,ref=G]
	\item\label{itm1*} $G$ satisfies \eqref{itmF3}, \eqref{itmF4} and is bounded from above in $\mathbf{R}^n \times \mathbf{R}^n$.
 \end{enumerate}

Under these assumptions the comparison principle holds for solutions of \eqref{eq:GenHGeq}. Indeed, the continuity assumptions on $H_1$, $G$ and $\beta$ imply 
that the function $F(x,r,p):=H_1(x,t,p)-\beta(r) G(x,p)$ satisfies \eqref{itmF1}, \eqref{itmF3}, \eqref{itmF4}
while \eqref{itmF2} is fulfilled with 
$\gamma =\Lb (\sup_{\mathbf{R}^n \times \mathbf{R}^n} G)$.

To guarantee that solutions of \eqref{eq:GenHGeq} preserve the original zero level set,
two kinds of sufficient conditions on $G$ are made in our theorem.
One is boundedness of $G$ from below,
which, unfortunately, excludes the typical case $G(x,p)=1-|p|$.
The other condition needs only local boundedness of $G$ from below near $p=0$
but requires solutions of \eqref{eq:init} to be Lipschitz continuous, which is not true in general if the initial data $u_0$ is just uniformly continuous.\par 
In the first author's dissertation \cite{Nao-thesis},
Theorem \ref{thm:PresZLS} \eqref{itm1**} is established,
but we give its proof here not only for the reader's convenience 
but also in order to show connection with the proof of \eqref{itm2**}.

\begin{theo}[Preservation of the zero level set]\label{thm:PresZLS}
Let $w$ and $u$ be, respectively,
the viscosity solution of \eqref{eq:init} and \eqref{eq:GenHGeq}, \eqref{eq:initin}
with a uniformly continuous $u_0$.
Assume either \eqref{itm1**} or \eqref{itm2**} below:
\begin{enumerate}[label=(\roman*) ,ref=\roman*]
	\item\label{itm1**}$G$ is bounded from below in $\mathbf{R}^n \times \mathbf{R}^n$.
	\item\label{itm2**}$G$ is bounded from below in $\mathbf{R}^n \times \overline{B_{\rho}(0)}$ for some $\rho\in (0,1]$,
and $\mathrm{Lip}_x[w]< \infty$.
 \end{enumerate}
Then we have $\Gamma_t=\{ u(\cdot ,t)=0\}$ 
and $D^{\pm}_t=\{ \pm u(\cdot ,t)>0\}$ for all $t \in (0,T)$, where $D^{\pm}$ and $\Gamma _t$ are defined in \eqref{zeronot1} and \eqref{zeronot2} respectively.
\end{theo}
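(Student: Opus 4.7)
The plan is to sandwich $u$ between a viscosity supersolution $v_+$ and subsolution $v_-$ of \eqref{eq:GenHGeq} constructed from $w$ by a sign-preserving monotone transformation, and then read off the zero level set from this sandwich. The guiding observation is that $\beta(0)=0$ forces the reaction term $\beta(u)G(x,\nabla u)$ to vanish on $\{u=0\}$, so the interface should evolve as if driven only by $H_1$, exactly as $\Gamma_t$ does under \eqref{eq:init}.

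Set $M:=\sup_{\mathbf{R}^n\times\mathbf{R}^n}|G|$ in case \eqref{itm1**} and $M:=\sup_{\mathbf{R}^n\times\overline{B_\rho(0)}}|G|$ in case \eqref{itm2**}; let $\Lambda:=L_\beta M$. Define
\[
\theta_\pm(r,t):=\max\{r,0\}\,e^{\pm\Lambda t}+\min\{r,0\}\,e^{\mp\Lambda t},\qquad v_\pm(x,t):=\theta_\pm(w(x,t),t).
\]
Each $\theta_\pm(\cdot,t)$ is continuous and strictly increasing with $\theta_\pm(0,t)=0$, $\mathrm{sign}\,\theta_\pm(r,t)=\mathrm{sign}\,r$, and $\theta_\pm(r,0)=r$; hence $v_\pm(\cdot,0)=u_0$ and $\{v_\pm(\cdot,t)=0\}=\Gamma_t$. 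Wherever $w\ne 0$, the geometricity \eqref{itm5} of $H_1$ yields $(v_+)_t-H_1(x,t,\nabla v_+)=\Lambda|v_+|\ge|\beta(v_+)G(x,\nabla v_+)|$ in the viscosity sense, so $v_+$ is a classical supersolution of \eqref{eq:GenHGeq} off $\{w=0\}$. At a touching point $(x_0,t_0)\in\{w=0\}$, $v_+(x_0,t_0)=0$, so a test function $\phi\le v_+$ with $\phi(x_0,t_0)=0$ has $\beta(\phi)=0$ and the supersolution inequality reduces to $\phi_t\ge H_1(x_0,t_0,\nabla\phi)$. This is obtained by applying Theorem \ref{thm:InvLSM} to the nondecreasing continuous map $r\mapsto\theta_+(r,t_0)$, together with the fact that the locally Lipschitz inverse $\theta_+^{-1}(\cdot,t)$ transports $\phi\le v_+$ to $\tilde\phi:=\theta_+^{-1}(\phi,\cdot)\le w$ (touching at $(x_0,t_0)$), whence $w$'s supersolution property pulls back to $\phi$ through the $e^{\pm\Lambda t}$ factors. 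A symmetric argument shows $v_-$ is a subsolution.

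The comparison principle Proposition \ref{prop3}, applicable because the right-hand side of \eqref{eq:GenHGeq} satisfies \eqref{itmF1}--\eqref{itmF4} under assumption \eqref{itm1*}, then yields $v_-\le u\le v_+$ on $\mathbf{R}^n\times[0,T)$. Since $\theta_\pm(r,t)$ has the same sign as $r$ and vanishes only at $r=0$, this gives $w(x,t)>0\Rightarrow u(x,t)\ge v_-(x,t)=w(x,t)e^{-\Lambda t}>0$, $w(x,t)<0\Rightarrow u(x,t)\le v_+(x,t)=w(x,t)e^{-\Lambda t}<0$, and $w(x,t)=0\Rightarrow v_\pm(x,t)=0\Rightarrow u(x,t)=0$, which is precisely $\Gamma_t=\{u(\cdot,t)=0\}$ and $D_t^\pm=\{\pm u(\cdot,t)>0\}$. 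In case \eqref{itm2**}, the bound on $|G|$ only holds for $|p|\le\rho$, so one must also enforce $|\nabla v_\pm|\le\rho$; this is achieved by inserting a factor $\rho/\mathrm{Lip}_x[w]$ in front of the $r$-variable in $\theta_\pm$, and the resulting mismatch with $\theta_\pm(r,0)=r$ is resolved by an approximation/rescaling argument exploiting the Lipschitz continuity of $w$.

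The main obstacle is the viscosity verification at touching points on $\{w=0\}$, where $\theta_\pm$ has a kink at $r=0$ and Theorem \ref{thm:InvLSM} (which only handles $r$-monotone transformations) must be carefully paired with the time-dependent exponential rescaling via the inverse-transformation argument; in case \eqref{itm2**}, the additional difficulty is reconciling the slope constraint $|\partial_r\theta_\pm|\le\rho/\mathrm{Lip}_x[w]$ with the initial normalization $\theta_\pm(r,0)=r$ needed for the comparison principle to yield $v_-\le u\le v_+$ directly.
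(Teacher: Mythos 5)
Your overall strategy coincides with the paper's: build a supersolution and a subsolution of \eqref{eq:GenHGeq} by rescaling $w$ exponentially in time with different factors on $\{w>0\}$ and $\{w<0\}$ (your $v_\pm$ are exactly the paper's $v^\star$, $v_\star$ with $G^\star=G_\star=\sup|G|$), verify the viscosity inequalities off and on $\{w=0\}$, and conclude by the comparison principle. The computation off $\{w=0\}$ and the final sandwich argument are fine. There are, however, two concrete soft spots.

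First, the verification at a touching point on $\{w=0\}$ — the heart of the proof — is not justified as written. You propose to transport a test function $\phi\le v_+$ to $\tilde\phi=\theta_+^{-1}(\phi,\cdot)\le w$ and invoke $w$'s supersolution property, but $\theta_+^{-1}(\cdot,t)$ has a kink precisely at the value $0$, which is exactly the value at the touching point; hence $\tilde\phi$ is not $C^1$ there and cannot be used as a test function for $w$, and one cannot replace it by either smooth branch $\phi e^{\pm\Lambda t}$ since $\tilde\phi=\min\{\phi e^{-\Lambda t},\phi e^{\Lambda t}\}$ and neither branch alone lies below $w$. The paper avoids this entirely: since $\beta(v^\star(z,s))=0$, the required inequality is $\tau\ge H_1(z,s,p)$ for $(p,\tau)\in\mathcal{J}^-v^\star(z,s)$, and this follows by observing $v^\star\le\chi_{\{w(\cdot,t)>0\}}$ near $(z,s)$ with equality at $(z,s)$, so $(p,\tau)\in\mathcal{J}^-\chi_{\{w(\cdot,t)>0\}}(z,s)$, and the characteristic function is a supersolution of \eqref{eq:initeq} by the invariance Theorem \ref{thm:InvLSM} (Remark \ref{rem:inv_char}). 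Your appeal to Theorem \ref{thm:InvLSM} is the right instinct, but it must be applied to a fixed ($t$-independent) monotone relabeling of $w$ that dominates $v_+$ near the touching point, not to the time-dependent inverse map.

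Second, in case \eqref{itm2**} your fix does not close. Inserting the factor $\rho/\mathrm{Lip}_x[w]$ in front of $r$ on \emph{both} branches of $\theta_+$ destroys the initial ordering $u_0\le v_+(\cdot,0)$ on $\{u_0>0\}$, and the ``approximation/rescaling argument'' you defer to is not specified. No such argument is needed: the lower bound on $G$ (hence the gradient constraint $|\nabla v_+|\le\rho$) is only used on the branch where $\beta(v_+)<0$, so one shrinks \emph{only} that branch by $\rho/\max\{\mathrm{Lip}_x[w],1\}$ (as the paper does with $\tilde v^\star$, $\tilde v_\star$); the initial ordering $v_-(\cdot,0)\le u_0\le v_+(\cdot,0)$ then holds exactly, and the positive multiplicative factor does not affect the sign of the level sets in the final step.
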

\begin{rem}
Assume that $G$ is independent of $x$ and continuous. Then \eqref{itm1**} is true if $G$ satisfies \eqref{eq:hprop3}, while \eqref{itm2**} is true if $G$ satisfies \eqref{eq:hprop2}.
\end{rem}
\begin{proof}
Assume that \eqref{itm1**} is true.\par 
1.
Set $G^\star =\max \{  \sup_{\mathbf{R}^n \times \mathbf{R}^n} G, \ 0 \}$
and $G_\star:=\max \{ -\inf_{\mathbf{R}^n \times \mathbf{R}^n} G, \ 0 \}$.
We define
\[ v^\star(x,t):=
\begin{cases}
e^{\Lb G^\star t} w(x,t)
& \mbox{if} \ w(x,t) \geq 0, \\
e^{-\Lb G_\star t} w(x,t)
& \mbox{if} \ w(x,t) < 0
\end{cases} \]
and 
\[ v_\star(x,t):=
\begin{cases}
e^{-\Lb G_\star t} w(x,t)
& \mbox{if} \ w(x,t) \geq 0, \\
e^{\Lb G^\star t} w(x,t)
& \mbox{if} \ w(x,t) < 0
\end{cases} \]
for $(x,t) \in \mathbf{R}^n \times [0,T)$,
where $\Lb$ is the Lipschitz constant of $\beta$ appearing in \eqref{eq1.1*}.
We claim that $v^\star$ and $v_\star$ are, respectively,
a viscosity supersolution and subsolution of \eqref{eq:GenHGeq}.\par
2.
We shall show that $v^\star$ is a supersolution.
If $w$ is smooth and $w(x,t)>0$, we compute
\begin{align*}
 v_t ^\star -H_1(x,t,\nabla v^\star)
&=\Lb G^\star v^\star + e^{\Lb G^\star t} w_t -H_1(x,t,e^{\Lb G^\star t} \nabla w) \\
&=\Lb G^\star v^\star + e^{\Lb G^\star t} \{ w_t -H_1(x,t, \nabla w) \} \\
&\geq \Lb G^\star v^\star +0 \\
&\geq \beta (v^\star ) G(x,\nabla v^\star ),
\end{align*}
which implies that $v^\star$ is a supersolution of \eqref{eq:GenHGeq}.
In the general case where $w$ is not necessarily smooth,
taking an element of the subdifferential of $w$,
we see that $v^\star$ is a viscosity supersolution of \eqref{eq:GenHGeq}.
Similar arguments apply to the case when $w(x,t)<0$,
so that $v^\star $ is a supersolution in $\{ w>0 \} \cup \{ w<0 \}$.
It remains to prove that $v^\star $ is a supersolution of \eqref{eq:GenHGeq} on $\{ w=0 \}$.\par 
Let $(z,s) \in \mathbf{R}^n \times (0,T)$ be a point such that $w(z,s)=0$,
and take $(p,\tau) \in \mathcal{J}^- v^\star (z,s)$.
Our goal is to derive 
$$\tau \geq H_1(z,s,p)$$
 since $\beta(v^\star (z,s))=0$.
To do this, we consider a characteristic function $g(x,t)=\chi_{D_t}(x)$.
We have $v^\star(z,s)=g(z,s)=0$ and $v^\star \leq g$ near $(z,s)$,
and thus $(p,\tau) \in \mathcal{J}^- g (z,s)$.
Since $g$ is a supersolution of \eqref{eq:initeq} by Remark \ref{rem:inv_char},
we have $\tau \geq H_1(z,s,p)$, which is the desired inequality.
Summarizing the above arguments, we conclude that $v^\star$ is a supersolution of \eqref{eq:GenHGeq}.
In the same manner we are able to prove that 
$v_\star$ is a subsolution of \eqref{eq:GenHGeq}.\par 
3.
Since $v^{\star}(x,0)=v_\star(x,0)=u_0(x)$ for all $x \in \mathbf{R}^n$,
the comparison principle (Proposition \ref{prop3}) yields 
\[
v_\star (x,t) \leq u(x,t) \leq v^\star(x,t) \text{ for all } (x,t) \in \mathbf{R}^n \times (0,T).
\]
In particular, we have
\[
 \{ v_\star (\cdot ,t)>0 \}\subset \{u(\cdot ,t)>0 \} \subset \{v^\star (\cdot ,t)>0\}.
 \]
Since $\{v_\star (\cdot ,t)>0\}=\{v^\star (\cdot ,t)>0\}=D^+_t$ by the definition of $v_\star$ and $ v^\star$,
we conclude that $D^+_t=\{u(\cdot ,t)>0\}$.
Similarly, we obtain $D^-_t=\{u(\cdot ,t)<0\}$,
and hence $\Gamma_t=\{u(\cdot ,t)=0\}$.\par 
Assume that \eqref{itm2**} is true.\par 
1.
Set $G_{\star\rho}:=\max \{ -\inf_{\mathbf{R}^n \times \overline{B_{\rho}(0)}} G, \ 0 \}$
and $m:=\max\{ \mathrm{Lip}_x [w], \ 1 \}$.
Instead of $v_\star$ and $ v^\star$ defined in Step 1 of \eqref{itm1**}, we consider the functions
\[ \tilde{v}^\star(x,t):=
\begin{cases}
e^{\Lb G^\star t} w(x,t)
& \mbox{if} \ w(x,t) \geq 0, \\
(\rho / m ) e^{-\Lb G_{\star\rho} t} w(x,t)
& \mbox{if} \ w(x,t) < 0
\end{cases} \]
and 
\[ \tilde{v}_\star(x,t):=
\begin{cases}
(\rho / m ) e^{-\Lb G_{\star\rho} t} w(x,t)
& \mbox{if} \ w(x,t) \geq 0, \\
e^{\Lb G^\star t} w(x,t)
& \mbox{if} \ w(x,t) < 0.
\end{cases} \]
Then $\tilde{v}^\star$ and $\tilde{v}_\star$ are
a viscosity supersolution and subsolution of \eqref{eq:GenHGeq} respectively.\par 
2.
We shall prove that $\tilde{v}_\star$ is a subsolution in $\{ w>0 \}$.
If $w$ is smooth, we have
$$|\nabla \tilde{v}_\star|=(\rho / m ) e^{-\Lb G_{\star\rho} t} |\nabla w|\leq \rho,$$
which implies that
$G(x,\nabla \tilde{v}_\star )\geq -G_{\star\rho}$.
Similarly to Step 2 of \eqref{itm1**},
we observe
\begin{align*}
 (\tilde{v}_\star )_t -H_1(x,t,\nabla \tilde{v}_\star)
&=-\Lb G_{\star\rho} \tilde{v}_\star
  +(\rho / m ) e^{-\Lb G_{\star\rho} t}\{ w_t -H_1(x,t, \nabla w) \} \\
&\leq -\Lb G_{\star\rho} \tilde{v}_\star +0 \\
&\leq \beta (\tilde{v}_\star ) G(x,\nabla \tilde{v}_\star),
\end{align*}
i.e., $\tilde{v}_\star$ is a subsolution.
The rest of the proof runs as before.
\end{proof}
As an immediate consequence of Theorem \ref{thm:PresZLS},
it follows that the evolution which is given as the zero level set of 
the solution of the non-geometric equation \eqref{eq:GenHGeq}
does not depend on the choice of its initial data.

\subsection{Barrier functions}\label{Sect:Barrier}
Throughout this subsection we will assume \eqref{eq:theor1_assumpt}.
Thanks to Theorem \ref{thm:PresZLS}, for a general $h$ satisfying \eqref{hprop1} and either one of the assumptions \eqref{eq:hprop2} or \eqref{eq:hprop3},
the solution $u^{\theta}$ of \eqref{eq6} and \eqref{eq:initin}
gives the same zero level set as $w$,  i.e., we have
$\Gamma_t=\{u^{\theta}=0\}$ and $D^{\pm}_t=\{\pm u^{\theta}>0\}$
for all $t\in (0,T)$.
In order to study the behaviour of $u^\theta$ as $\theta\rightarrow \infty$ and a relation between $\Gamma _t$ and the zero level set of the limit of $u^\theta$, we will construct barrier functions independent of $\theta$.
More precisely we construct an upper barrier $f^\star$ and a lower barrier $f_\star$ such that
\begin{align*}
& f_\star \leq u^{\theta} \leq f^\star ,\,\Gamma_t=\{ f^\star =0\}=\{f_\star =0\},\\
& \quad\quad D^+=\{f^\star >0\}=\{f_\star >0\},\\
&\quad\quad D^-=\{f^\star <0\}=\{f_\star <0\}.
\end{align*}
In this subsection we often use the fact that,
if $u$ is a supersolution (resp. subsolution) of \eqref{eq6} in $D^+$,
and if $\Gamma =\{u=0\}$ and $D^{\pm}=\{\pm u>0\}$,
then $u_+$ is a supersolution (resp. subsolution) of \eqref{eq6} in $\mathbf{R}^n \times (0,T)$.
This follows from Remark \ref{rem:inv_char}.
Indeed, if $(p,\tau) \in \mathcal{J}^- u_+ (z,s)$ and $u_+(z,s)=0$,
then we have $(p,\tau) \in \mathcal{J}^- \chi_{D^+} (z,s)$ and 
this yields the desired viscosity inequality
since the characteristic function is a supersolution of \eqref{eq:initeq} by Remark \ref{rem:inv_char}.
The proof for a subsolution is similar.

We first show that the solutions $u^{\theta}$ are monotone 
with respect to $\theta$ when $h$ is nonnegative.
This gives a lower barrier in $D^+$ and an upper barrier in $D^-$ 
in the case of \eqref{eq:hprop3}.

\begin{pro}[Monotonicity]\label{prop:mono}
Assume that $h \geq 0$.
Let $0<\theta_1< \theta_2$ and 
$u^{\theta_1}$ and $u^{\theta_2}$ be, respectively, 
the viscosity solution of \eqref{eq6}
with $\theta=\theta_1$ and $\theta_2$.
Then 
\begin{align}
&& u^{\theta_1}(x,t)&\leq u^{\theta_2}(x,t)
&\mbox{for all} \ x \in D^+_t, 
\label{eq:Mono_D+} \\
&& u^{\theta_1}(x,t)&\geq u^{\theta_2}(x,t)
&\mbox{for all} \ x \in D^-_t. 
\label{eq:Mono_D-}
\end{align}
\end{pro}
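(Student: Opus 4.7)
The plan is to apply the comparison principle of Proposition \ref{prop3} to the positive and negative parts of $u^{\theta_1}$ and $u^{\theta_2}$, exploiting that the nonlinear term $\theta\beta(u)h(\nabla u)$ is monotone in $\theta$ but with opposite signs on $D^+$ and $D^-$. Preservation of the zero level set (Theorem \ref{thm:PresZLS}, applicable since $h\geq 0$ is bounded from below) is the key input: both $u^{\theta_j}$ are positive on the interior of $D^+$, negative on the interior of $D^-$, and vanish on $\Gamma$, so $\beta(u^{\theta_j})$ has a definite sign on each side of $\Gamma$.

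For \eqref{eq:Mono_D+}, on the interior of $D^+$ assumption \eqref{eq1.1*} gives $\beta(u^{\theta_1})>0$; combined with $h\geq 0$ and $\theta_1<\theta_2$, one has in the viscosity sense
\[ (u^{\theta_1})_t - H_1(x,t,\nabla u^{\theta_1}) = \theta_1 \beta(u^{\theta_1}) h(\nabla u^{\theta_1}) \leq \theta_2 \beta(u^{\theta_1}) h(\nabla u^{\theta_1}), \]
so $u^{\theta_1}$ is a subsolution of \eqref{eq6} with parameter $\theta_2$ on that open set. By the observation recalled at the start of this subsection, $(u^{\theta_1})_+$ is then a subsolution of \eqref{eq6} with $\theta=\theta_2$ on all of $\mathbf{R}^n\times(0,T)$, and since $u^{\theta_2}$ itself solves the equation with $\theta=\theta_2$, the same device shows that $(u^{\theta_2})_+$ is a supersolution on $\mathbf{R}^n\times(0,T)$. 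Both truncations coincide at $t=0$ with $(u_0)_+$, so Proposition \ref{prop3} (applicable thanks to the Lipschitz continuity from Proposition \ref{prop2}) yields $(u^{\theta_1})_+\leq (u^{\theta_2})_+$ throughout $\mathbf{R}^n\times(0,T)$. Restricting to $D^+_t$, where each $u^{\theta_j}$ coincides with its positive part, proves \eqref{eq:Mono_D+}.

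The argument for \eqref{eq:Mono_D-} is entirely symmetric: on the interior of $D^-$ the sign of $\beta(u^{\theta_1})$ is reversed while $h\geq 0$ remains, so the inequality in $\theta$ flips and $u^{\theta_1}$ becomes a supersolution of \eqref{eq6} with $\theta=\theta_2$ there. The analogue of the truncation argument, now using that $\chi_{D^+_t\cup\Gamma_t}$ is a subsolution of \eqref{eq:initeq} (see Remark \ref{rem:inv_char}), shows that $(u^{\theta_1})_-:=\min\{u^{\theta_1},0\}$ is a supersolution and $(u^{\theta_2})_-$ a subsolution of \eqref{eq6} with $\theta=\theta_2$ on all of $\mathbf{R}^n\times(0,T)$; comparison then delivers $(u^{\theta_2})_-\leq (u^{\theta_1})_-$, which restricts on $D^-_t$ to \eqref{eq:Mono_D-}. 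The delicate point throughout is the passage to the truncations $(u^{\theta_j})_\pm$ across the interface $\Gamma_t$; this is precisely what the observation at the start of this subsection guarantees, and since $\beta(0)=0$ the extra term contributes nothing at points of $\Gamma$, so the viscosity inequality for the truncations on $\Gamma$ reduces to one that is automatic from the characteristic-function sub-/supersolutions of \eqref{eq:initeq}.
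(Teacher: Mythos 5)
Your proof is correct and follows essentially the same route as the paper's: establish that $u^{\theta_1}$ is a subsolution (resp.\ supersolution) of \eqref{eq6} with $\theta=\theta_2$ in $D^+$ (resp.\ $D^-$) using the sign of $\beta(u^{\theta_1})$ and $h\geq 0$, then pass to the truncations and apply the comparison principle. You spell out more explicitly the role of Theorem \ref{thm:PresZLS} and the characteristic-function justification for the truncation across $\Gamma$, but these are exactly the ingredients the paper implicitly relies on via the remark at the start of Subsection \ref{Sect:Barrier}.
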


\begin{proof}
In $D^+$ we observe 
\begin{align*} 
u^{\theta_1}_t
&=H_1(x,\nabla u^{\theta_1})
 +\theta_1 \beta (u^{\theta_1}) h(\nabla u^{\theta_1}) \\
&\leq H_1(x,\nabla u^{\theta_1})
 +\theta_2 \beta (u^{\theta_1}) h(\nabla u^{\theta_1})
\end{align*}
since $h$ is nonnegative.
This implies that $u^{\theta_1}$ is a subsolution of 
\eqref{eq6} with $\theta=\theta_2$ in $D^+$.
Applying the comparison principle to 
a subsolution $(u^{\theta_1})_+$ and a supersolution $(u^{\theta_2})_+$ of \eqref{eq6} with $\theta=\theta_2$, 
we conclude $u^{\theta_1} \leq u^{\theta_2}$ in $D^+$.
By the same argument we see that 
$u^{\theta_2} \leq u^{\theta_1}$ in $D^-$.
\end{proof}

We show that solutions of \eqref{eq:initeq} with small Lipschitz constants
give rise to lower barrier functions in $D^+$ and 
upper barrier functions in $D^-$.
\begin{pro}\label{prop:Bar_coer}
Assume that $h(p)\geq 0$ if $|p|\leq 1$. Let
$w$ be the solution of \eqref{eq:init}.
Then the viscosity solution $u^{\theta}$ of 
\eqref{eq6} and \eqref{eq:initin} satisfies 
\begin{align}
&& u^{\theta}(x,t) &\geq \varepsilon w(x,t)
&\mbox{for all} \ x \in D^+_t, 
\label{eq:Est_w_D+} \\
&& u^{\theta}(x,t) &\leq \varepsilon w(x,t)
&\mbox{for all} \ x \in D^-_t,
\label{eq:Est_w_D-}
\end{align}
where $\varepsilon:=\min\{ 1/\mathrm{Lip}_x[w], \ 1 \}$.
\end{pro}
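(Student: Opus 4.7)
The plan is to use the rescaled solution $v := \varepsilon w$ as a two-sided barrier, exploiting the geometric structure of $H_1$. First I would observe that by property \eqref{itm5}, $v$ is itself a viscosity solution of \eqref{eq:initeq}, and its sign structure matches $w$: $\{v>0\}=D^+_t$, $\{v<0\}=D^-_t$, $\{v=0\}=\Gamma_t$. The crucial point is the gradient bound: since $\mathrm{Lip}_x[v]=\varepsilon\,\mathrm{Lip}_x[w]\le 1$ by the definition of $\varepsilon$, every element $(p,\tau)\in\mathcal{J}^\pm v(z,s)$ satisfies $|p|\le 1$, so the hypothesis on $h$ yields $h(p)\ge 0$ at any such $p$. (The degenerate case $\mathrm{Lip}_x[w]=\infty$ forces $\varepsilon=0$ and the statement becomes trivial.)

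Next I would verify that $v$ is a viscosity subsolution of \eqref{eq6} on $D^+$ and a viscosity supersolution of \eqref{eq6} on $D^-$. On $D^+$, for $(p,\tau)\in\mathcal{J}^+ v(z,s)$, the subsolution inequality for \eqref{eq:initeq} gives $\tau\le H_1(z,s,p)$; combined with $\beta(v(z,s))\ge 0$ (from \eqref{eq1.1*}, since $v(z,s)>0$) and $h(p)\ge 0$, the nonnegative quantity $\theta\beta(v(z,s))h(p)$ can be added without breaking the inequality. The $D^-$ case is symmetric: $\beta(v(z,s))\le 0$ while $h(p)\ge 0$, so the perturbation term has the right sign to transform the supersolution inequality for \eqref{eq:initeq} into one for \eqref{eq6}.

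Finally, I would globalize these one-sided properties using the cutoff device recalled at the start of Subsection \ref{Sect:Barrier}, together with Theorem \ref{thm:PresZLS} applied to $u^\theta$ (which guarantees $\{u^\theta=0\}=\Gamma_t$ and $\{\pm u^\theta>0\}=D^{\pm}_t$). On the positive side, $v_+$ becomes a subsolution of \eqref{eq6} on all of $\mathbf{R}^n\times(0,T)$ while $(u^\theta)_+$ is a supersolution there; at $t=0$ one has $v_+(\cdot,0)=\varepsilon(u_0)_+\le(u_0)_+=(u^\theta)_+(\cdot,0)$ since $\varepsilon\le 1$, and both functions grow at most linearly, so Proposition \ref{prop3} yields $v_+\le(u^\theta)_+$, which on $D^+$ is exactly \eqref{eq:Est_w_D+}. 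The bound \eqref{eq:Est_w_D-} follows by the same strategy with $\min\{\cdot,0\}$ in place of $(\cdot)_+$; in fact that extension is easier, since $\min\{v,0\}\le 0$ forces any lower test function at a contact point where the value vanishes to attain a local maximum there, hence to have vanishing space and time derivatives, and the required supersolution inequality reduces to $0\ge H_1(z,s,0)=0$ via \eqref{itm5}. The only slightly delicate step I expect is the viscosity promotion of the Lipschitz estimate $|p|\le 1$ on elements of $\mathcal{J}^\pm v$; everything else is straightforward bookkeeping and two invocations of the comparison principle.
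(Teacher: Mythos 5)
Your proposal is correct and follows essentially the same route as the paper: rescale $w$ to $\tilde w=\varepsilon w$ so that $|\nabla\tilde w|\le 1$ and hence $h(\nabla\tilde w)\ge 0$, conclude that $\tilde w$ is a subsolution of \eqref{eq6} in $\{\tilde w>0\}$ (and symmetrically in $\{\tilde w<0\}$), and then compare $(\tilde w)_+$ with $(u^{\theta})_+$ (resp.\ the negative parts) via the cutoff device of Subsection \ref{Sect:Barrier} and Proposition \ref{prop3}. The extra details you supply (the viscosity-level gradient bound on $\mathcal{J}^{\pm}\tilde w$, the vanishing of test-function derivatives at zero-level contact points, and the ordering of initial data) are exactly the bookkeeping the paper leaves implicit.
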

\begin{proof}
Set $\tilde{w}:=\varepsilon w$.
Since $|\nabla \tilde{w}|=\varepsilon |\nabla w|\leq 1$,
by the assumption of $h$ we observe 
\[ \tilde{w}_t
=H_1(x,t,\nabla \tilde{w}) 
\leq H_1(x,t,\nabla \tilde{w})+\beta (\tilde{w})h(\nabla \tilde{w}) \]
if $\tilde{w}>0$.
In other words, $\tilde{w}$ is a subsolution of \eqref{eq6} in $\{ \tilde{w}>0 \}$.
Applying the comparison principle to a subsolution $(\tilde{w})_+$
and a supersolution $(u^{\theta})_+$ of \eqref{eq6},
we obtain \eqref{eq:Est_w_D+}.
The estimate \eqref{eq:Est_w_D-} is shown in a similar way.
\end{proof}
It remains to construct an upper barrier in $D^+$ and a lower barrier in $D^-$.
In both the cases \eqref{eq:hprop2} and \eqref{eq:hprop3},
the solutions $u^{\theta}$ are dominated 
by the signed distance function $d$ with large coefficient.
In the proof of Proposition \ref{prop:Bar_dist} below,
we use the fact that $d$ is a viscosity supersolution of
\begin{equation} 
d_t=H_1(x-d \nabla d,t, \nabla d) \quad \mbox{in} \ \{ d>0 \}.
\label{eq:eqford}
\end{equation}
This assertion is more or less known 
(see, e.g., \cite[Proof of Theorem 2.2, Step 1--3]{Evans-Soner-Souganidis.1992}), 
but we give its proof in Remark \ref{pf_dsup}
for the reader's convenience.
\begin{pro}\label{prop:Bar_dist}
Assume that $h(p)\leq 0$ if $|p|\geq 1$.
Then the viscosity solution $u^{\theta}$ of 
\eqref{eq6} and \eqref{eq:initin} satisfies 
\begin{align}
&& u^{\theta}(x,t) &\leq  l e^{\LH t} d(x,t)
&\mbox{for all} \ x \in D^+_t, 
\label{eq:Est_plus_D+} \\
&& u^{\theta}(x,t) &\geq  l e^{\LH t} d(x,t)
&\mbox{for all} \ x \in D^-_t,
\label{eq:Est_plus_D-}
\end{align}
where $l:=\max \{ L_0 , \ 1 \}$ and
$\LH$ is the constant in \eqref{itm6}.
\end{pro}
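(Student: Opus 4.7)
My plan is to take $v(x,t) := l e^{L_1 t} d(x,t)$ as the candidate barrier and verify that it is a viscosity supersolution of \eqref{eq6} in $D^+$ for the first inequality, then conclude by the comparison principle. The lower inequality in $D^-$ will then follow by a reflection argument.

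For the supersolution property, the formal computation at points where $d$ is smooth with $|\nabla d|=1$ reads
\begin{align*}
v_t - H_1(x,t,\nabla v)
&= L_1 v + l e^{L_1 t}\bigl(d_t - H_1(x,t,\nabla d)\bigr) \\
&\geq L_1 v + l e^{L_1 t}\bigl(H_1(x - d\nabla d,t,\nabla d) - H_1(x,t,\nabla d)\bigr) \\
&\geq L_1 v - L_1 l e^{L_1 t} d = 0,
\end{align*}
where the first inequality uses the supersolution identity for $d$ recalled in Remark \ref{pf_dsup} and the second uses the scaled form $|H_1(x,t,p)-H_1(y,t,p)|\leq L_1|x-y||p|$ of \eqref{itm6} (combined with the geometricity \eqref{itm5}) together with $|\nabla d|=1$. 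Since $|\nabla v| = l e^{L_1 t} \geq 1$, the hypothesis on $h$ gives $h(\nabla v) \leq 0$, and in $D^+$ we have $\beta(v) \geq 0$, so $\beta(v) h(\nabla v) \leq 0$ and the supersolution inequality for the full equation \eqref{eq6} follows. To render this rigorous, given $(p,\tau) \in \mathcal{J}^- v(z,s)$ with $(z,s)\in D^+$, the rescaling $\psi(x,t) := \phi(x,t) e^{-L_1 t}/l$ produces a test function for $d$ at $(z,s)$; since $d$ exhibits ``upward tents'' at its spatial cut locus, $\mathcal{J}^- d(z,s)$ is non-empty only where $d$ is spatially differentiable, in which case $|\nabla_x d|=1$ and hence $|p|=l e^{L_1 s}$, whereupon the chain of inequalities closes in the viscosity sense.

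Next, by the preservation of the zero level set (Theorem \ref{thm:PresZLS}), both $u^\theta$ and $v$ share the sign pattern of $w$, so the $(\cdot)_+$ trick promotes $v_+$ to a global supersolution and $(u^\theta)_+$ to a global subsolution of \eqref{eq6} on $\mathbf{R}^n\times(0,T)$. At $t=0$, the $L_0$-Lipschitz continuity of $u_0$ and its vanishing on $\Gamma_0$ yield $(u_0)_+(x) \leq L_0 d(x,0) \leq l\, d(x,0) = v_+(x,0)$ on $D^+_0\cup\Gamma_0$, while both vanish on $D^-_0$. Proposition \ref{prop3} then gives $(u^\theta)_+ \leq v_+$ globally, which is \eqref{eq:Est_plus_D+}. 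For the lower estimate, I would set $\tilde H_1(x,t,p):=-H_1(x,t,-p)$, $\tilde\beta(r):=-\beta(-r)$, $\tilde h(p):=h(-p)$, and $\tilde u_0:=-u_0$. These transformed data still obey \eqref{eq:theor1_assumpt} and the sign condition $\tilde h(p)\leq 0$ for $|p|\geq 1$, and $-u^\theta$ solves the corresponding reflected version of \eqref{eq6} with $\tilde D^+_t=D^-_t$ and reflected signed distance $-d$. Applying the just-proved upper estimate to the reflected problem yields $-u^\theta\leq l e^{L_1 t}(-d)$ in $D^-$, i.e., \eqref{eq:Est_plus_D-}.

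The main obstacle is the viscosity-theoretic justification of the gradient manipulation, namely the control of the subdifferential of $v$ at non-smooth points of $d$. This is handled via the bijection between $\mathcal{J}^- v$ and $\mathcal{J}^- d$ provided by the rescaling $\psi$, together with the observation that $\mathcal{J}^- d$ is empty at the spatial cut locus of $d$, so the supersolution inequality only needs to be checked where $d$ has a classical unit spatial gradient.
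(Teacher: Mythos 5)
Your proposal is correct and follows essentially the same route as the paper: the barrier $le^{L_1t}d$ is shown to be a supersolution of \eqref{eq6} in $D^+$ by combining the supersolution property of $d$ for \eqref{eq:eqford} (Remark \ref{pf_dsup}) with the scaled Lipschitz estimate from \eqref{itm6}, the sign condition on $h$, and the comparison principle via the $(\cdot)_+$ truncation; the $D^-$ case is obtained by the symmetric argument. Your explicit handling of the subdifferential at non-smooth points of $d$ and the reflection for the lower bound simply spell out steps the paper leaves as "the same arguments work in the viscosity sense" and "the proof is similar."
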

\begin{proof}
Define $\tilde{d}(x,t):= l e^{\LH t} d(x,t)$.
If $d$ is smooth, then
\begin{align*}
\tilde{d}_t - H_1(x,t,\nabla \tilde{d})
&= l L_1 e^{\LH t} d+ l e^{\LH t} d_t - H_1(x,t, l e^{\LH t} \nabla d) \\
&= l e^{\LH t} \{ \LH d + d_t - H_1(x,t,\nabla d) \}.
\end{align*}
We next apply the fact that $d$ is a supersolution of \eqref{eq:eqford} 
to estimate
\begin{align*}
\tilde{d}_t - H_1(x,t,\nabla \tilde{d}) 
&\geq  l e^{\LH t} \{ \LH d + H_1 (x-d \nabla d,t, \nabla d) - H_1(x,t,\nabla d) \} \\
&\geq l e^{\LH t} \{ \LH d -\LH |d\nabla d||\nabla d| \}
\end{align*}
if $d>0$.
Noting that $|\nabla d|=1$, we have
\[ \tilde{d}_t - H_1(x,t,\nabla \tilde{d}) 
\geq  l e^{\LH t} \{ \LH d -\LH d  \}=0. \]
Since $|\nabla \tilde{d}|= l e^{\LH t} |\nabla d|\geq 1$,
we now have $h(\nabla \tilde{d}) \leq 0$ by assumption.
This implies that $\tilde{d}$ is a supersolution 
of \eqref{eq6} in $\{ d>0 \}$.
Even if $d$ is not smooth, 
the same arguments above work in the viscosity sense.\par 
Finally, since $u_0 \le l d_+(\cdot,0)$ in $\mathbf{R}^n$,
applying the comparison principle to 
a subsolution $u^{\theta}$ and a supersolution $(\tilde{d})_+$ of \eqref{eq6},
we conclude \eqref{eq:Est_plus_D+}.
The proof of \eqref{eq:Est_plus_D-} is similar.
\end{proof}
\begin{rem}\label{pf_dsup}
We shall explain why $d$ is a supersolution of \eqref{eq:eqford}. We first note that $d$ is lower semicontinuous in $D^+$ (Theorem \ref{thm:cont_d} \eqref{itm:continuity_1}).
Let $(x_0,t_0) \in \mathbf{R}^n \times (0,T)$ be a point satisfying $d(x_0,t_0)>0$
and take any $(p,\tau) \in \mathcal{J}^- d(x_0,t_0)$.
We choose a smooth function $\phi \in C^1$ such that
$(p,\tau)=(\nabla \phi, \phi_t)(x_0,t_0)$ and
\[
\displaystyle\min_{\mathbf{R}^n \times (0,T)}(d-\phi)=(d-\phi)(x_0,t_0)=0.
\]
Set $d_0:=d(x_0,t_0)$. Since $p\in \mathcal{J}^-(d|_{t=t_0})(x_0)$,
it follows that the closest point of $\Gamma_{t_0}$ to $x_0$ is unique
and that this point is given by $y_0:=x_0-d_0 p \in \Gamma_{t_0}$;
for the proof, refer to 
\cite[Proposition II.2.14]{Bardi-Dolcetta} or 
\cite[Corollary 3.4.5 (i), (ii)]{Cannarsa-Sinestrari}.
We also remark that $|p|=1$.

Define $\psi(x,t):=\phi(x+d_0 p,t)-d_0$.
We now assert
\begin{equation}
\min_{\mathbf{R}^n \times (0,T)}(d_+ -\psi)
=(d_+ -\psi)(y_0,t_0).
\label{eq:min_psi}
\end{equation}
Since $(d_+ -\psi)(y_0,t_0)=0$ and $d_+ \geq 0$,
we only need to show $\{ \psi >0 \} \subset \{ d>0 \}$.
Take a point $(x,t) \in \mathbf{R}^n \times (0,T)$ such that $\psi(x,t)>0$.
We then have $d(x+d_0 p, t) \geq \phi(x+d_0 p,t)>d_0$.
Using the Lipschitz continuity of $d$, we compute
\[ d(x,t)\geq d(x+d_0 p, t)-d_0 |p|>d_0-d_0=0. \]
Thus \eqref{eq:min_psi} is proved.
Let $g(x,t)=\chi_{D^+_t}(x)$.
Then the relation \eqref{eq:min_psi} implies that 
$(p,\tau) \in \mathcal{J}^- g(y_0,t_0)$,
where we applied
$(\nabla \psi, \psi_t)(y_0,t_0)=(\nabla \phi, \phi_t)(x_0,t_0)=(p,\tau)$.
Since the characteristic function $g$ is a supersolution of
\eqref{eq:initeq} (see Remark \ref{rem:inv_char}),
we have
\[ \tau \geq H_1(y_0,t_0,p)=H_1(x_0-d_0 p,t_0,p), \]
which is the inequality we need in order
to conclude that $d$ is a supersolution of \eqref{eq:eqford}.
\end{rem}
\begin{rem}\label{alt_proof}
Another way of proving Proposition \ref{prop:Bar_dist} is using the Lipschitz continuity of solutions of \eqref{eq6} and \eqref{eq:initin} from Proposition \ref{prop2}. Using assumption \eqref{itm6} instead of \eqref{itm6'} in Proposition \ref{prop2}, the Lipschitz estimate for $u^\theta$ reads as follows:
\begin{equation}\label{eq:Lip_est}
|u^\theta (x,t)-u^\theta (y,t)|\leq le^{L_1t}|x-y|\quad \text{ for all } x,y\in \mathbf{R}^n,\, t\in [0,T),
\end{equation}
where $l=\max \{ L_0,1\} $ ($L_0=\mathrm{Lip}[u_0]$). If we take the infimum for all $y\in \Gamma _t$ in \eqref{eq:Lip_est} we get
\[
-le^{L_1t}\mathrm{dist}(x,\Gamma _t)\leq u^\theta (x,t)\leq le^{L_1t}\mathrm{dist}(x,\Gamma _t)\quad \text{ for all } (x,t)\in \mathbf{R}^n\times [0,T),
\]
which implies the relations \eqref{eq:Est_plus_D+} and \eqref{eq:Est_plus_D-}.
\end{rem}
\subsection{Comparison principle for eikonal equations}\label{section_compar_eikonal}

We investigate uniqueness of solutions of the eikonal equation
$|\nabla u|=1$ in a possibly unbounded set.
To establish a convergence to the signed distance function,
we show in the next section
that the limit of the solutions $u^{\theta}$ solves the eikonal equation.
Since the distance function is a solution of the eikonal equation,
the uniqueness result presented below guarantees that 
the limit is the distance function.

We consider the eikonal equation
\begin{equation}
|\nabla u|=1 \quad \mbox{in} \ \Omega
\label{eq:eik_Du1}
\end{equation}
with the boundary condition
\begin{equation}
u=0 \quad \mbox{on} \ \partial \Omega.
\label{eq:eik_zD}
\end{equation}
Here $\Omega \subset \mathbf{R}^n$ is a possibly unbounded open set.
We denote by $d_{\Omega}$ the distance function to $\partial \Omega$,
i.e., $d_{\Omega}(x):=\mathrm{dist}(x,\partial \Omega)$.
It is well known that $d_{\Omega}$ is a viscosity solution 
of \eqref{eq:eik_Du1}; see, e.g., 
\cite[Corollary II.2.16]{Bardi-Dolcetta} or 
\cite[Corollary 3.4.5 (i), (ii) or Remark 5.6.1]{Cannarsa-Sinestrari}.
In other words, the problem \eqref{eq:eik_Du1} with \eqref{eq:eik_zD}
admits at least one viscosity solution.
Comparison principle (and hence uniqueness) of viscosity solutions 
of \eqref{eq:eik_Du1} and \eqref{eq:eik_zD}
is established in \cite{Ishii.1987} when $\Omega$ is bounded.
If $\Omega$ is not bounded,
the uniqueness of solutions does not hold in general;
for instance, when $\Omega=(0,\infty) \subset \mathbf{R}$, 
all of the following functions are solutions:
\[ d_{\Omega}(x)=x, \quad -d_{\Omega}(x)=-x, \quad 
u_a(x)=\min\{ x, \ a-x \} \ (a>0). \]
However, even if $\Omega$ is not bounded,
it turns out that nonnegative solutions 
of \eqref{eq:eik_Du1} and \eqref{eq:eik_zD}
are unique and equal to $d_{\Omega}$.

\begin{lem}\label{lem:CP_eik}
Let $u:\Omega \to \mathbf{R}$.
\begin{enumerate}[label=(\arabic*) ,ref=\arabic*]
	\item\label{item:1side_dist} If $u$ is a viscosity subsolution of \eqref{eq:eik_Du1} and $u^*\leq 0$ on $\partial \Omega$,
then $u^* \leq d_{\Omega}$ in $\Omega$.
	\item\label{item:2side_dist}
If $u$ is a viscosity supersolution of \eqref{eq:eik_Du1} and $u \geq 0$ in $\Omega$,
then $d_{\Omega} \leq u_*$ in $\Omega$.
 \end{enumerate}
\end{lem}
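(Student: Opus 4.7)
For (\ref{item:1side_dist}) I would rely on the classical fact that every upper semicontinuous viscosity subsolution $v$ of $|\nabla v|\le 1$ on an open set is locally $1$-Lipschitz there; a standard proof uses sup-convolution to regularize $v$, observes that the regularization $v^\varepsilon$ is semiconvex and inherits $|\nabla v^\varepsilon|\le 1$ almost everywhere and is therefore $1$-Lipschitz, and then passes to the limit as $\varepsilon\to 0$. Granting this, fix $\bar x\in\Omega$ and pick a nearest point $\bar y\in\partial\Omega$, so that $|\bar x-\bar y|=d_\Omega(\bar x)$. The half-open segment $L:=\{z_t:=(1-t)\bar x+t\bar y:t\in[0,1)\}$ lies entirely in $\Omega$: if some $z_{t_0}$ with $t_0<1$ belonged to $\partial\Omega$, then $|\bar x-z_{t_0}|=t_0|\bar x-\bar y|<d_\Omega(\bar x)$, contradicting the minimality of $|\bar x-\bar y|$. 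Applying the local Lipschitz bound to the compact sub-segments of $L$ gives $u^*(\bar x)-u^*(z_t)\le t|\bar x-\bar y|$ for every $t\in[0,1)$. Letting $t\to 1^-$ and invoking the upper semicontinuity of $u^*$ at $\bar y$ together with the hypothesis $u^*\le 0$ on $\partial\Omega$ yields $u^*(\bar x)\le d_\Omega(\bar x)$.

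For (\ref{item:2side_dist}) I would compare $u$ with a cone on the inscribed ball. Fix $\bar x\in\Omega$, set $r:=d_\Omega(\bar x)$ so that $B_r(\bar x)\subset\Omega$, and define $\phi(x):=r-|x-\bar x|$. Away from $\bar x$ the function $\phi$ is smooth with $|\nabla\phi|=1$, while at $\bar x$ it attains its maximum so that $\mathcal{J}^+\phi(\bar x)\subset\overline{B_1(0)}$; in either situation the viscosity subsolution inequality $|p|\le 1$ is satisfied, and $\phi$ is a viscosity subsolution of \eqref{eq:eik_Du1} on the whole ball $B_r(\bar x)$. On $\partial B_r(\bar x)\subset\overline{\Omega}$ one has $\phi=0\le u_*$ thanks to the hypothesis $u\ge 0$ in $\Omega$. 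Ishii's comparison principle for bounded domains \cite{Ishii.1987} applied on $B_r(\bar x)$ then yields $\phi\le u_*$ on $\overline{B_r(\bar x)}$, and evaluating at $\bar x$ gives $d_\Omega(\bar x)=r=\phi(\bar x)\le u_*(\bar x)$.

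The principal obstacle lies in (\ref{item:1side_dist}): because $\Omega$ may be unbounded, no global comparison principle for \eqref{eq:eik_Du1} is available, as the counterexamples displayed just above the lemma make plain. The remedy is to reduce the question to a bounded, one-dimensional problem along a segment to a nearest boundary point; the geometric observation that such a segment stays inside $\Omega$ is the decisive ingredient. Part (\ref{item:2side_dist}) is easier because the one-sided sign assumption $u\ge 0$ automatically supplies the missing boundary data on the artificial boundary $\partial B_r(\bar x)$, allowing one to invoke a bounded-domain comparison result directly.
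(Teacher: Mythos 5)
Your proof is correct. For part (\ref{item:1side_dist}) you take essentially the same route as the paper: the paper simply quotes the fact that subsolutions of \eqref{eq:eik_Du1} are $1$-Lipschitz (citing \cite{Giga-Liu-Mitake.2014} and \cite{Monneau-Roquejoffre-RoussierMicho.2013}) and concludes immediately, whereas you sketch the sup-convolution proof of that fact and then spell out the segment argument --- noting that the half-open segment to a nearest boundary point stays in $\Omega$ --- before passing to the limit using upper semicontinuity and $u^*\le 0$ on $\partial\Omega$. This extra care is welcome, since the pointwise Lipschitz bound is really a bound along segments contained in $\Omega$, and the nearest-point segment is exactly the one needed.

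For part (\ref{item:2side_dist}) your argument is a genuine (if mild) variant. The paper truncates: it sets $\Omega_R=\Omega\cap B_R(0)$, uses that $d_R=\mathrm{dist}(\cdot,\partial\Omega_R)$ is a subsolution with $d_R=0\le u_*$ on $\partial\Omega_R$, applies the bounded-domain comparison principle to get $d_R\le u_*$ in $\Omega_R$, and sends $R\to\infty$. You instead fix $\bar x$, inscribe the ball $B_r(\bar x)$ with $r=d_\Omega(\bar x)$, and compare $u_*$ with the cone $\phi(x)=r-|x-\bar x|$, which is a subsolution on the whole ball (including at the vertex, where every test function touching from above has gradient in $\overline{B_1(0)}$) and vanishes on $\partial B_r(\bar x)$ where $u_*\ge 0$. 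Both proofs rest on the same two ingredients --- Ishii's comparison principle on bounded domains and the observation that the sign hypothesis supplies the boundary data on the artificial boundary --- but yours is pointwise and avoids the limit $R\to\infty$ together with the small verification that $d_R\to d_\Omega$, at the cost of checking the subsolution property of the cone at its vertex. Either version is acceptable.
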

\begin{proof}
(1)
It is known that every subsolution of \eqref{eq:eik_Du1} is 
Lipschitz continuous with Lipschitz constant less than or equal to one,
that is, $|u^*(x)-u^*(y)|\leq |x-y|$ for all $x,y \in \Omega$.
(For the proof see, e.g., \cite[Lemma 5.6]{Giga-Liu-Mitake.2014} or 
\cite[Proof of Proposition 2.1, Step 1]{Monneau-Roquejoffre-RoussierMicho.2013}.)
This yields the inequality $u^* \leq d_{\Omega}$.

(2)
We consider a bounded set $\Omega_R:=\Omega \cap B_R(0)$ with $R>0$.
Define $d_R(x):=\mathrm{dist}(x,\partial \Omega_R)$.
We first note that $u_*\geq 0$ on $\overline{\Omega}$ 
since $u\geq 0$ in $\Omega$, and that
$u_* \geq 0=d_R$ on $\partial \Omega_R$.
Thus, by the comparison principle in bounded sets, we see
$d_R \leq u_*$ in $\Omega_R$.
Finally, sending $R \to \infty$, we conclude $d_\Omega \leq u_*$ in $\Omega$.
\end{proof}
\section{Convergence results}\label{section:converg_results}
Throughout this section we assume \eqref{eq:theor1_assumpt}.
We will first prove Theorem \ref{Theor1} \eqref{itm:main2}, it will then be easier for the reader to understand the proof of Theorem \ref{Theor1} \eqref{itm:main1}, \eqref{itm:main1.5}.\par 
 We introduce a notion of the half-relaxed limits (\cite[Section 6]{Crandall-Ishii-Lions.Ug4}),
which are weak limits of a sequence of functions and
will be used in the proof of the convergence to the distance function.
We define an {\em upper half-relaxed limit} 
$\overline{u}=\limsup^*_{\theta \to \infty} u^{\theta}$ 
and a {\em lower half-relaxed limit}
$\underline{u}=\liminf_{* \theta \to \infty} u^{\theta}$ as
\begin{align*}
\overline{u}(x,t)
:=&\limsup_{(y,s,\theta)\to (x,t,\infty)} u^{\theta}(y,s) \\
 =&\lim_{\delta \to 0} \sup 
\{ u^{\theta}(y,s) \ | \ |x-y|<\delta, \ |t-s|<\delta, \ \theta >1/\delta \}, \\
\underline{u}(x,t)
:=&\liminf_{(y,s,\theta)\to (x,t,\infty)} u^{\theta}(y,s) \\
 =&\lim_{\delta \to 0} \inf 
\{ u^{\theta}(y,s) \ | \ |x-y|<\delta, \ |t-s|<\delta, \ \theta >1/\delta \}.
\end{align*}
Thanks to the existence of barrier functions shown in Section \ref{Sect:Barrier},
we see that, in both the cases \eqref{eq:hprop2} and \eqref{eq:hprop3},
$-\infty< \overline{u}<\infty$ and $-\infty< \underline{u} <\infty$.

The following proposition is true in the general case where the distance function is not necessarily continuous.
\begin{pro}[The zero level set of the relaxed limits]\label{prop:ZLofRLofu}
Assume either \eqref{eq:hprop2} or \eqref{eq:hprop3}. Then
	 \begin{equation}
\{ \underline{u}>0 \}=D^+, \quad 
\{ \underline{u}=0 \} \subset \Gamma, \quad 
\{ \underline{u}<0 \} \supset D^- \label{eq:ZLofLRL}
\end{equation}
and 
\begin{equation} 
\{ \overline{u}>0 \} \supset D^+, \quad 
\{ \overline{u}=0 \} \subset \Gamma, \quad 
\{ \overline{u}<0 \}=D^-. \label{eq:ZLofURL}
\end{equation}
\end{pro}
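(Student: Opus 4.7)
First I would prove $\{\underline u > 0\} = D^+$ using the preservation of the zero level set (Theorem \ref{thm:PresZLS}) together with the uniform lower barrier from Proposition \ref{prop:Bar_coer}. For the inclusion $D^+ \subset \{\underline u > 0\}$, fix $(x,t) \in D^+$; since $w$ is continuous and $D^+$ is open, there is $\delta > 0$ such that $B_\delta(x,t) \subset D^+$ and $w(y,s) \geq w(x,t)/2 > 0$ on $B_\delta(x,t)$. Proposition \ref{prop:Bar_coer} applied pointwise then gives $u^\theta(y,s) \geq \varepsilon w(y,s) \geq \varepsilon w(x,t)/2$ for all such $(y,s)$ and every $\theta > 0$, where $\varepsilon = \min\{1/\mathrm{Lip}_x[w], 1\}$. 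Passing to the liminf yields $\underline u(x,t) \geq \varepsilon w(x,t)/2 > 0$. Conversely, if $\underline u(x,t) > 0$ then by the very definition of the lower half-relaxed limit one can pick some large $\theta$ for which $u^\theta(x,t) > 0$, and Theorem \ref{thm:PresZLS} forces $(x,t) \in D^+$.

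The second step would be entirely symmetric: applying the upper barrier $u^\theta \leq \varepsilon w$ of Proposition \ref{prop:Bar_coer} on the open set $D^-$ gives $\{\overline u < 0\} = D^-$ by the same two-sided argument. The remaining one-sided inclusions $\{\overline u > 0\} \supset D^+$ and $\{\underline u < 0\} \supset D^-$ in the statement are then immediate from the trivial inequality $\underline u \leq \overline u$ combined with the two equalities already proved.

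To finish, I would deduce $\{\underline u = 0\} \subset \Gamma$ and $\{\overline u = 0\} \subset \Gamma$ from the disjoint partition $\mathbf{R}^n \times (0,T) = D^+ \cup \Gamma \cup D^-$: a point at which $\underline u = 0$ cannot lie in $D^+$ (where $\underline u > 0$) nor in $D^-$ (where $\underline u \leq \overline u < 0$), and therefore must lie in $\Gamma$; the argument for $\overline u$ is identical.

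I do not expect serious obstacles here, since essentially all of the work has already been done in Theorem \ref{thm:PresZLS} (to read off the sign of $u^\theta$ pointwise, independent of $\theta$) and in Proposition \ref{prop:Bar_coer} (to turn that sign into a local, uniform-in-$\theta$ bound away from zero). The only conceptual care is in understanding why the equalities hold on the $D^\pm$-side while only the inclusions hold on the opposite side: a sequence $(y_k,s_k,\theta_k) \to (x,t,\infty)$ with $u^{\theta_k}(y_k,s_k) < 0$ may have all its points $(y_k,s_k)$ in $D^-$ even though the limit point $(x,t)$ sits on $\Gamma$ — this is precisely the instantaneous-extinction phenomenon responsible for the discontinuity of $d$ illustrated by Example \ref{exmp:sdisc}, and it prevents the reverse inclusions from holding in general.
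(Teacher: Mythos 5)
Your proof is correct, and it follows the same overall strategy as the paper (uniform-in-$\theta$ barriers plus the preservation of the zero level set), but with one genuine variation worth recording. The paper sandwiches $(u^{\theta})_+$ between the two barriers, $\varepsilon w_+ \leq (u^{\theta})_+ \leq L d_+$ (Propositions \ref{prop:Bar_coer} and \ref{prop:Bar_dist}), and passes to the lower half-relaxed limit; the inclusion $\{\underline{u}>0\}\subset D^+$ then comes from $\underline{u}\leq L(d_+)_*\leq Ld_+$, and the asymmetry of the statement (equality on one side, inclusion on the other) is traced to the semicontinuity of $d_+$ versus $-d_-$. You instead avoid the distance-function barrier entirely: for the reverse inclusion you observe that $\underline{u}(x,t)>0$ forces $u^{\theta}(x,t)>0$ for all large $\theta$ (the point $(x,t)$ itself lies in every neighbourhood over which the infimum is taken), and then Theorem \ref{thm:PresZLS} pins $(x,t)$ to $D^+$; similarly for $\{\overline{u}<0\}=D^-$, after which the remaining one-sided inclusions fall out of $\underline{u}\leq\overline{u}$. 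This is slightly cleaner in that it sidesteps the semicontinuity discussion of $d_{\pm}$ altogether and makes the source of the asymmetry transparent (as you note, a sequence realizing the limit at a point of $\Gamma$ may live entirely in $D^{\pm}$); the paper's route has the advantage of reusing the exact same two displayed inequalities that are needed again in Lemma \ref{Lemma:comparison_discont}. Both arguments rest on the same two ingredients from Section \ref{section:main_tools}, and yours is complete as written.
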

\begin{proof}
We only show \eqref{eq:ZLofLRL}
since a proof of \eqref{eq:ZLofURL} is similar.
Let $v:=\liminf_{* \theta \to \infty} (u^{\theta})_+$.
Then it is easily seen that $v=(\underline{u})_+$.
From the estimates \eqref{eq:Est_w_D+} and \eqref{eq:Est_plus_D+} of $u^{\theta}$ by barrier functions we derive
\[ 
\varepsilon w_+ \leq (u^{\theta})_+ \leq L d_+
 \]
for some $\varepsilon ,L >0$. Taking the lower half-relaxed limit, we obtain
\[
 \varepsilon w_+ \leq v \leq L (d_+)_* \leq L d_+.
\]
Since $\{ w_+>0 \}=\{ d_+ >0 \}=D^+$,
the above inequalities imply $\{ v>0 \}=D^+$, 
and hence $\{ \underline{u}>0 \}=D^+$.
We similarly have
\[ -L d_- \leq -(u^{\theta})_- \leq -\varepsilon w_-. \]
In this case, however, taking the lower half-relaxed limit yields only 
$\{ \underline{u}<0 \} \supset \{ w<0 \} =D^-$
because $-d_-$ is upper semicontinuous.
The inclusion $\{ \underline{u}=0 \} \subset \Gamma$ is now clear.
\end{proof}

\subsection{Convergence results for continuous distance function}
The following general properties of the relaxed limits 
will be used to prove the convergence of $u^{\theta}$:
\begin{itemize}
\item
Assume that each $u^{\theta}$ is a subsolution (resp. supersolution) 
of the equation $F_{\theta}=0$.
If $F_{\theta}$ converges to some $F$ locally uniformly 
and $\overline{u}< \infty$ (resp. $\underline{u}>-\infty$),
then $\overline{u}$ is a subsolution 
(resp. $\underline{u}$ is a supersolution) of $F=0$.

\item
If $\overline{u}=\underline{u} =:u$ and $-\infty <u<\infty$, 
then $u^{\theta}$ converges to $u$ locally uniformly as $\theta \to \infty$.

\end{itemize}
See \cite[Lemma 6.1, Remark 6.4]{Crandall-Ishii-Lions.Ug4} for the proofs.\par
Assume that $d$ is continuous in $\mathbf{R}^n \times (0,T)$, in particular, we can now use the additional upper-semicontinuity property of $d_+$ and $d_-$. Proceeding in a similar way as in Proposition \ref{prop:ZLofRLofu} we can show
  \begin{equation}\label{eq:similar_level_sets}
  \Gamma=\{\overline{u}=0\}=\{\underline{u}=0\}, \quad
D^{\pm}=\{\pm \overline{u}>0\}=\{\pm\underline{u}>0\} . 
  \end{equation}

%
%
%
%

\begin{proof}[Proof of Theorem \ref{Theor1} \eqref{itm:main2}]
1.
For $\theta >0$ we define 
\[ F_{\theta}(x,t,r,p,\tau)
:=\frac{1}{\theta}\{ \tau-H_1(x,t,p) \}-\beta (r)h(p). \]
Then $u^{\theta}$ is a viscosity solution of the equation
$F_{\theta}(x,t,u,\nabla u,u_t)=0$
in $\mathbf{R}^n \times (0,T)$.
Since $F_{\theta}$ converges to $-\beta (r)h(p)$
locally uniformly as $\theta \to \infty$,
it follows that $\overline{u}$ and $\underline{u}$ are,
respectively, a viscosity subsolution and a viscosity supersolution of 
$-\beta (u)h(\nabla u)=0$ in $\mathbf{R}^n \times (0,T)$.

Recall that $h$ satisfies \eqref{eq:hprop2}.
Since $\beta (\overline{u})>0$ in $D^+$
and $\beta (\overline{u})<0$ in $D^-$ by \eqref{eq:similar_level_sets},
we see that $\overline{u}$ is a subsolution of 
\begin{equation} 
|\nabla u(x,t)|=1 \quad \mbox{in} \ D^+
\label{eq:Eik_UD+}
\end{equation}
and 
\begin{equation} 
-|\nabla u(x,t)|=-1 \quad \mbox{in} \ D^-
\label{eq:Eik_UD-}
\end{equation}
as a function of $(x,t)$.
(Note that these two equations are different in the viscosity sense.)
Similarly, $\underline{u}$ is a supersolution of
both \eqref{eq:Eik_UD+} and \eqref{eq:Eik_UD-}.
Thus, for each fixed $t_0\in (0,T)$, 
$\overline{u}|_{t=t_0}$ and $\underline{u}|_{t=t_0}$ are,
respectively, a subsolution and a supersolution of 
\begin{equation} 
|\nabla u(x)|=1 \quad \mbox{in} \ D^+_{t_0}
\label{eq:Eik_D+}
\end{equation}
as a function of $x$.
(See Remark \ref{rem:xt_x} for the details.)
By Lemma \ref{lem:CP_eik} we obtain
\[
\overline{u}|_{t=t_0} \leq d(\cdot ,t_0) \leq \underline{u}|_{t=t_0}\quad \text{ in } D^+_{t_0}
\]
and hence 
\[
d=\overline{u}=\underline{u}\quad \text{ in } D^+.
\]
This implies that $u^{\theta} \to d$ locally uniformly in $D^+$.
For $D^-$ we notice that if $\overline{u}(\cdot ,t_0)$ is a subsolution of $-|\nabla u|= -1$ then $-\overline{u}(\cdot ,t_0)$ is a supersolution of $|\nabla u|= 1$, hence a comparison with $-d$ this time gives the desired result.
\end{proof}
\begin{rem}\label{rem:xt_x}
We claim that, 
if $u=u(x,t)$ is a subsolution of \eqref{eq:Eik_UD+},
then $u|_{t=t_0}$ is a subsolution of \eqref{eq:Eik_D+}
for a fixed $t_0\in (0,T)$.
To show this, we take a test function $\phi \in C^1(\mathbf{R}^n)$
such that $\max_{\mathbf{R}^n}(u|_{t=t_0}-\phi)=u(x_0,t_0)-\phi(x_0)$ for $x_0 \in D^+_{t_0}$.
We may assume that this is a strict maximum.
Next define $\psi_M(x,t):=\phi(x)+M(t-t_0)^2$.
We then have
\[ \left( \liminf_{M \to \infty} \hspace{-0.7mm}_* 
\psi_M \right)(x,t)=
\begin{cases} \phi(x) & \mbox{if} \ t=t_0, \\ 
\infty & \mbox{if} \ t \neq t_0, \end{cases} \]
so that $u-(\liminf_* \psi_M)$ has a strict maximum 
over $\mathbf{R}^n \times (0,T)$ at $(x_0,t_0)$.
By \cite[Lemma 2.2.5]{Giga.See}
there exist sequences $\{ M_n \}_{n=1}^{\infty} \subset (0,\infty)$
and $\{ (x_n,t_n) \}_{n=1}^{\infty} \subset \mathbf{R}^n \times (0,T)$
such that 
$M_n \to \infty$, $(x_n,t_n) \to (x_0,t_0)$ as $n \to \infty$ and
$u-\psi_{M_n}$ has a local maximum at $(x_n,t_n)$.
Since $u$ is a subsolution of \eqref{eq:Eik_UD+}, we have
\[ 1 \geq |\nabla \psi_{M_n}(x_n,t_n) |=|\nabla \phi(x_n)|. \]
Sending $n \to \infty$ implies $|\nabla \phi(x_0)| \leq 1$;
namely, $u|_{t=t_0}$ is a subsolution of \eqref{eq:Eik_D+}.
\end{rem}

\subsection{Convergence results for general distance functions}
 For the case where the distance $d$ is not necessarily continuous we can only compare the half-relaxed limits with the distance function in certain domains due to the fact that only the inclusions in Proposition \ref{prop:ZLofRLofu} are true.
\begin{lem}[Comparison with the distance]\label{Lemma:comparison_discont}
Assume that either \eqref{eq:hprop2} or \eqref{eq:hprop3} hold. Then
\begin{enumerate}[label=(\arabic*) ,ref=\arabic*]
	\item\label{item2} \begin{align}
d \leq \underline{u} \quad \mbox{in $D^+$}, \label{eq:duubar_gen} \\
\overline{u} \leq d \quad \mbox{in $D^-$}. \label{eq:uobard_gen}
\end{align}
	\item\label{item3} For every $t \in (0,T)$,
\begin{align}
\underline{u}(\cdot ,t)=0 \quad \mbox{on $\partial D^+_t$}, \label{eq:uubar_z} \\
\overline{u}(\cdot ,t)=0  \quad \mbox{on $\partial D^-_t$}. \label{eq:uobar_z}
\end{align}
 \end{enumerate}
\end{lem}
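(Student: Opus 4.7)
The plan is to establish (1) by combining the stability of half-relaxed limits with the eikonal comparison of Lemma \ref{lem:CP_eik}, and to establish (2) from the barrier sandwich provided by Proposition \ref{prop:Bar_dist} together with a semicontinuity property of $d$.

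For (1), I would set
\[ F_{\theta}(x,t,r,p,\tau) := \frac{1}{\theta}\bigl(\tau - H_1(x,t,p)\bigr) - \beta(r) h(p), \]
so that $u^{\theta}$ solves $F_{\theta} = 0$ and $F_{\theta}$ converges locally uniformly to $-\beta(r)h(p)$ as $\theta \to \infty$. By the standard stability of half-relaxed limits, $\underline{u}$ (resp.\ $\overline{u}$) is a viscosity supersolution (resp.\ subsolution) of $-\beta(u) h(\nabla u) = 0$ in $\mathbf{R}^n \times (0,T)$. In the open set $\{w>0\}$, Proposition \ref{prop:ZLofRLofu} gives $\underline{u} > 0$ and thus $\beta(\underline{u}) > 0$; the supersolution inequality then forces $h(p) \leq 0$ for every $(p,\tau) \in \mathcal{J}^- \underline{u}$. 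Under either \eqref{eq:hprop2} or \eqref{eq:hprop3}, this is equivalent to $|p| \geq 1$. A time-freezing argument as in Remark \ref{rem:xt_x} then shows that, for each $t_0 \in (0,T)$, the slice $\underline{u}(\cdot,t_0)$ is a nonnegative viscosity supersolution of $|\nabla u|=1$ in $\Omega := \{w(\cdot,t_0) > 0\}$, whose boundary distance equals $d(\cdot,t_0)$ on $\Omega$. Applying Lemma \ref{lem:CP_eik} \eqref{item:2side_dist} then yields \eqref{eq:duubar_gen}. The inequality \eqref{eq:uobard_gen} is obtained symmetrically by applying the same reasoning to $-\overline{u}$ in $\{w(\cdot,t_0) < 0\}$.

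For (2), I would combine the upper barrier of Proposition \ref{prop:Bar_dist} in $D^+$ with the preservation of the zero level set (Theorem \ref{thm:PresZLS}, yielding $u^{\theta} \leq 0$ on $\Gamma \cup D^-$), and symmetrically on the other side, to obtain the global sandwich
\[ -l e^{L_1 t} d_-(x,t) \leq u^{\theta}(x,t) \leq l e^{L_1 t} d_+(x,t) \quad \text{on} \ \mathbf{R}^n \times (0,T). \]
Passing to the lower half-relaxed limit,
\[ -l e^{L_1 t} (d_-)^*(x,t) \leq \underline{u}(x,t) \leq l e^{L_1 t} (d_+)_*(x,t). \]
For $x_0 \in \partial D^+_{t_0} \subset \Gamma_{t_0}$, the upper bound $\underline{u}(x_0,t_0) \leq 0$ is immediate since $d_+ \geq 0$ and $d_+(x_0,t_0) = 0$ force $(d_+)_*(x_0,t_0) = 0$. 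For the matching lower bound $\underline{u}(x_0,t_0) \geq 0$, I would invoke the lower semicontinuity of $d$ proved in Section \ref{Section:continuity_distance} (Theorem \ref{thm:cont_d} \eqref{itm:continuity_1}): $-d$ is then upper semicontinuous, so $d_- = (-d)_+$ is upper semicontinuous, and therefore $(d_-)^*(x_0,t_0) = d_-(x_0,t_0) = 0$. This establishes \eqref{eq:uubar_z}; the statement \eqref{eq:uobar_z} then follows from the symmetric argument applied to the upper half-relaxed limit at points of $\partial D^-_t$.

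The hard part will be the matching lower bound in (2), namely $\underline{u} \geq 0$ on $\partial D^+_t$ together with its symmetric counterpart $\overline{u} \leq 0$ on $\partial D^-_t$. The upper bounds come essentially for free from the preservation of the zero level set, but the lower bounds require the asymmetric regularity of $d$ at interface points where an extinction may occur; this is precisely the lower semicontinuity of $d$ isolated in Section \ref{Section:continuity_distance}. Because the half-relaxed limits sample values at nearby space-time points, one cannot conclude the sharp value $0$ on $\partial D^+_t$ from the behaviour of $u^{\theta}$ on $\Gamma$ alone, and the argument must bridge the barrier estimates with this global continuity property of $d$.
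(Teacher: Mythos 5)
Your proof of part (1) is essentially the paper's argument (stability of half-relaxed limits for $F_{\theta}\to -\beta(r)h(p)$, positivity of $\underline{u}$ in $D^+$ from Proposition \ref{prop:ZLofRLofu}, time-freezing as in Remark \ref{rem:xt_x}, then Lemma \ref{lem:CP_eik} \eqref{item:2side_dist}), and the barrier sandwich you set up for part (2) is also the one the paper uses. The gap is in the justification of the lower bound in part (2).

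You claim that the signed distance function $d$ is lower semicontinuous by Theorem \ref{thm:cont_d} \eqref{itm:continuity_1}, and deduce that $d_-=(-d)_+$ is upper semicontinuous everywhere, hence $(d_-)^*(x_0,t_0)=d_-(x_0,t_0)=0$. But Theorem \ref{thm:cont_d} is stated under the standing convention of Section \ref{Section:continuity_distance} that the distance function is \emph{non-negative}; it applies to $d_+$ and to $d_-$ separately, each of which is therefore lower semicontinuous, and gives nothing about the signed $d=d_+-d_-$. Indeed the signed $d$ is not lower semicontinuous in general: mirror Example \ref{exmp:sdisc} (take $-u_0$ and reverse the sign of the velocity so that the negative phase between the two bumps is swallowed at $t=1$); then $d(0,t)\to 0$ as $t\uparrow 1$ on the fat zero set while $d(0,t)\to -4$ as $t\downarrow 1$, so $d$ fails to be lower semicontinuous and, equivalently, $d_-$ fails to be upper semicontinuous at $(0,1)$. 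So the global upper semicontinuity of $d_-$ you invoke is false, and with it the step $(d_-)^*(x_0,t_0)=0$ is unproved.

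What saves the statement is that the point $x_0\in\partial D^+_{t_0}$ is special: it lies in $\overline{D^+_{t_0}}\subset\overline{\mathrm{int}(D^+_{t_0}\cup\Gamma_{t_0})}$, i.e.\ in the closure of the interior of the zero set of the non-negative distance function $d_-$. Proposition \ref{prop:suf_nonE} then shows $x_0$ is not an extinction point for that evolution, and Theorem \ref{thm:cont_d} \eqref{itm:continuity_3} (applied to $d_-$, with $N_{t_0}(x_0)=\{x_0\}$) gives continuity of $d_-$ at $(x_0,t_0)$, hence $(d_-)^*(x_0,t_0)=0$. (One can also argue directly by finite propagation: a ball $\overline{B_{\delta}(y_j)}$ inside the open negative phase at times $s_j\downarrow t_0$ would, by Proposition \ref{prop:FP_ball}, contradict the existence of points of $D^+_{t_0}$ arbitrarily close to $x_0$.) Your concluding remarks correctly identify that the difficulty is a one-sided regularity of $d$ at interface points, but the semicontinuity you appeal to points in the wrong direction; the non-extinction criterion, not lower semicontinuity of the signed $d$, is what closes the argument.
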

\begin{proof}
We give proofs of \eqref{eq:duubar_gen} and \eqref{eq:uubar_z} since \eqref{eq:uobard_gen} and \eqref{eq:uobar_z} can be shown in similar ways.\par 
\textbf{\eqref{item2}}
In the same manner as in the proof of Theorem \ref{Theor1} \eqref{itm:main2},
it follows that $\underline{u}(\cdot,t)$ is a viscosity supersolution
of \eqref{eq:Eik_D+} in $D^+_t$.
Since $\underline{u}(\cdot,t)>0$ in $D^+_t$ by \eqref{eq:ZLofLRL},
the comparison principle (Lemma \ref{lem:CP_eik} \eqref{item:2side_dist}) implies that 
$d(\cdot,t)\leq \underline{u}(\cdot,t)$ in $D^+_t$.\par 
\textbf{\eqref{item3}}
By \eqref{eq:Est_plus_D+} and \eqref{eq:Est_plus_D-} we have
\begin{equation}\label{eq:Lem_compar_disc_1}
-L d_- \leq u^{\theta} \leq L d_+,
\end{equation}
where $L>0$ is a constant.
Taking the lower half-relaxed limit at $(x,t)$, we obtain
\begin{equation} 
-L (d_-)^*(x,t) \leq \underline{u}(x,t) 
\leq L (d_+)_*(x,t)\leq Ld_+(x,t).  
\label{eq:dm_uubar_dp}
\end{equation}
Let $x \in \partial D^+_t$.
Then the right-hand side of \eqref{eq:dm_uubar_dp} is 0 since $x\in \Gamma _t$.  We next study the limit of 
$d_-(y,s)=\mathrm{dist}(y, D^+_s \cup \Gamma_s)$ on the left-hand side.
Since $x \in \partial D^+_t \subset \overline{D^+_t} \subset \overline{\mathrm{int}(D^+_t \cup \Gamma_t)}$,
it is not an extinction point (Definition \ref{defn:Expt}) by Proposition \ref{prop:suf_nonE}.
Therefore Theorem \ref{thm:cont_d} \eqref{itm:continuity_3} ensures that $d_-$ is continuous at $(x,t)$.
This implies that the left-hand side of \eqref{eq:dm_uubar_dp} is 0,
and hence the conclusion follows.
\end{proof}
\begin{proof}[Proof of Theorem \ref{Theo1.1}]
Since \eqref{eq:hprop3} holds,
the monotonicity of $u^{\theta}$ (Proposition \ref{prop:mono})
yields the following representations:
\[ \underline{u}(x,t)= \sup_{\theta >0} u^{\theta}(x,t) 
\quad \mbox{for} \ x \in D^+_t, \quad 
\overline{u}(x,t)= \inf_{\theta >0} u^{\theta}(x,t)
\quad \mbox{for} \ x \in D^-_t. \]
These relations and Lemma \ref{Lemma:comparison_discont} \eqref{item2} conclude the proof.
\end{proof}
For the equation with $h$ satisfying \eqref{eq:hprop3},
Theorem \ref{Theo1.1} guarantees only the one side inequality between  
the supremum of $u^{\theta}$ and the signed distance function $d$.
However, as the next example shows,
the opposite inequality is not true in general 
even if the initial datum is smaller than the distance function.
\begin{ex}\label{exmp:no_convergence}
Let us consider \eqref{eq:init} 
for the Hamiltonian of the form \eqref{eq:exmpH_1} with $c(x)=(1-|x|)_+ +1$.
We take the initial datum $u_0$ as $u_0(x)=(1-|x|)_+$.
The unique viscosity solution $w$ of this initial value problem 
is given as the value function \eqref{eq:rep_opt}.
In this case the optimal control is the one 
that leads to a straight trajectory with the maximal speed
before it comes to the origin and stays there after that moment.
Thus direct calculations yield the following simplified representation of $w$:
\[ w(x,t)=\begin{cases} 
1 & \mbox{if} \ |x|\leq 2(1-e^{-t}), \\
(2-|x|)e^t -1 & \mbox{if} \ 2(1-e^{-t}) \leq |x| \leq 1, \\
e^{t-|x|+1}-1 & \mbox{if} \ 1 \leq |x| \leq t+1, \\
0 & \mbox{if} \ t+1\leq |x|
\end{cases} \quad \mbox{for} \ t \leq \log 2, \]
and
\[ w(x,t)=\begin{cases} 
1 & \mbox{if} \ |x|\leq t+1-\log 2, \\
e^{t-|x|+1}-1 & \mbox{if} \ t+1-\log 2\leq |x| \leq t+1, \\
0 & \mbox{if} \ t+1\leq |x|
\end{cases} \quad \mbox{for} \ t \geq \log 2. \]
See Figure \ref{fig:d4_solw}. In particular, we have $w(x,t)=1$ if $|x|=t+1-\log 2 \geq 1$.
Also, $\{ w=0 \}=\{ |x|\geq t+1 \}$ and 
the signed distance function $d$ to the interface is
$d(x,t)=(t+1-|x|)_+$.
We thus have $d(x,t)=\log 2$ if $|x|=t+1-\log 2$,
and so
\begin{equation} 
d(x,t)=\log 2 <1=w(x,t) 
\quad \mbox{if} \ |x|=t+1-\log 2 \geq 1. 
\label{eq:dlogw}
\end{equation}
Since the solution $w$ is non-negative, 
it is a viscosity subsolution of \eqref{eq6} 
with $h \geq 0$ for every $\theta>0$.
Accordingly, $w \leq u^{\theta}$ by the comparison principle.
From \eqref{eq:dlogw} it follows that
\[ d(x,t)=\log 2 <1\leq u^{\theta}(x,t) 
\quad \mbox{if} \ |x|=t+1-\log 2 \geq 1, \]
which implies that the inequality $d \geq \sup_{\theta>0}u^{\theta}$
does not hold on the whole space.

We also remark that, for $\gamma \in (\log 2,1)$,
the inequality $d(x,t)<\gamma w(x,t)$ holds 
if $|x|=t+1-\log 2 \geq 1$ and that
$\gamma w$ is a solution of \eqref{eq:initeq} with the initial datum $\gamma u_0$.
From this we see that $u^{\theta}$ can be greater than $d$ at some point
even if we take an initial datum which is strictly less than $d(x,0)$
in $\{ d(\cdot,0)>0 \}$.

\begin{figure}[htbp]
\begin{center}
\includegraphics{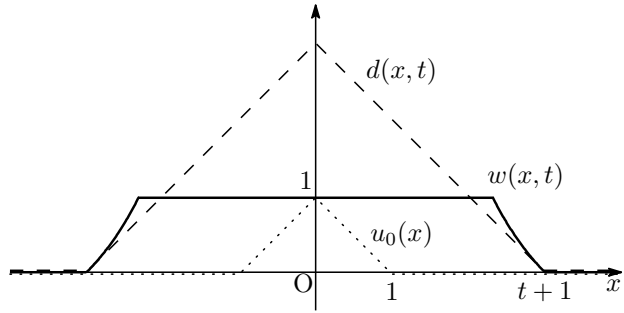}
\end{center}
\caption{The graph of $w$ when $t \ge \log 2$.}
\label{fig:d4_solw}
\end{figure}

\end{ex}
In the rest of this subsection we will assume that $h$ satisfies the assumption \eqref{eq:hprop2}.
We now introduce several notions of half-relaxed limits.
Let $(x,t) \in \mathbf{R}^n \times (0,T)$.
We define an {\it upper} and a {\it lower half-relaxed limit from below in time} by, respectively,
\[ \overline{u}'(x,t)
:=\limsup_{\begin{subarray}{c}(y,s,\theta) \to (x,t,\infty) \\ s\leq t \end{subarray}} u^{\theta}(y,s),
\quad \underline{u}'(x,t)
:=\liminf_{\begin{subarray}{c}(y,s,\theta) \to (x,t,\infty) \\ s\leq t \end{subarray}} u^{\theta}(y,s). \]
An {\it upper} and a {\it lower half-relaxed limit at a fixed time}
are, respectively, given as
\[ \overline{u|_t}(x)
:=\limsup_{(y,\theta) \to (x,\infty)} u^{\theta}(y,t),
\quad \underline{u|_t}(x)
:=\liminf_{(y,\theta) \to (x,\infty)} u^{\theta}(y,t). \]
By definitions we have
\begin{equation} 
\underline{u}(x,t) \leq \underline{u}'(x,t) \leq \underline{u|_t}(x) 
\leq \overline{u|_t}(x) \leq \overline{u}'(x,t) \leq \overline{u}(x,t)
\label{eq:limits_general}
\end{equation}
for all $(x,t) \in \mathbf{R}^n \times (0,T)$.\par 
The next proposition is a crucial step
in proving a convergence to the signed distance function in a weak sense.
\begin{pro}\label{prop:sub_fixed_t}
Assume either \eqref{eq:hprop2} or \eqref{eq:hprop3}. Then the functions
$\overline{u}'(\cdot,t)$ and $\underline{u}'(\cdot,t)$ are,
respectively, a viscosity subsolution of \eqref{eq:Eik_UD+} in $D^+_t$
and a viscosity supersolution of \eqref{eq:Eik_UD-} in $D^-_t$
for every $t \in (0,T)$.
\end{pro}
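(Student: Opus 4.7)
The strategy is a stability-under-half-relaxed-limits argument analogous to that in the proof of Theorem \ref{Theor1} \eqref{itm:main2}, now tailored to the one-sided-in-time limits $\overline{u}'$ and $\underline{u}'$. I outline the subsolution statement for $\overline{u}'(\cdot,t_0)$; the assertion for $\underline{u}'(\cdot,t_0)$ follows by an analogous, sign-reversed argument. Heuristically, dividing $u^\theta_t = H_1 + \theta\beta(u^\theta)h(\nabla u^\theta)$ by $\theta$ and letting $\theta\to\infty$, the viscosity subsolution inequality passes in the limit to $-\beta(u)h(\nabla u)\leq 0$, i.e., $\beta h\geq 0$. On $D^+_{t_0}$, the chain \eqref{eq:limits_general} combined with Proposition \ref{prop:ZLofRLofu} yields $\overline{u}'(x_0,t_0)\geq\underline{u}(x_0,t_0)>0$, so $\beta(\overline{u}'(x_0,t_0))>0$; hence $h(\nabla u)\geq 0$ in the limit, which by assumption \eqref{eq:hprop2.2} translates into $|\nabla u|\leq 1$.

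To make this rigorous, fix $x_0 \in D^+_{t_0}$ and a test function $\phi \in C^1(\mathbf{R}^n)$ such that $\overline{u}'(\cdot,t_0) - \phi$ achieves a maximum at $x_0$. After adding a term $\epsilon|x - x_0|^2$ (which does not alter $\nabla\phi(x_0)$) I may assume strict local maximality, and after a vertical translation $\overline{u}'(x_0,t_0) = \phi(x_0)$. Pick $r>0$ with $\overline{B_r(x_0)} \times [t_0-r,t_0] \subset D^+$. Following the device of Remark \ref{rem:xt_x}, I extend $\phi$ to the space-time test function
\[
\Phi_M(x,t) := \phi(x) + M(t - t_0)^2,
\]
whose $t$-derivative vanishes at $t=t_0$. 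Choose a defining sequence $(y_k,s_k,\theta_k)\to(x_0,t_0,\infty)$ with $s_k\leq t_0$ and $u^{\theta_k}(y_k,s_k)\to\phi(x_0)$, then extract a subsequence and pick $M_k\to\infty$ so that $M_k(s_k-t_0)^2\to 0$ and $\sqrt{M_k}/\theta_k\to 0$ (for instance, $|s_k-t_0|\leq k^{-2}$, $\theta_k\geq k^4$, $M_k=k$). Let $(x_k,t_k) \in K := \overline{B_r(x_0)} \times [t_0-r,t_0]$ be a maximizer of $u^{\theta_k} - \Phi_{M_k}$ over $K$. The chain
\[
\sup_K(u^{\theta_k}-\Phi_{M_k}) \geq u^{\theta_k}(y_k,s_k) - \phi(y_k) - M_k(s_k - t_0)^2 \longrightarrow 0,
\]
together with the uniform $L^\infty$-barriers of Subsection \ref{Sect:Barrier}, gives $M_k(t_k - t_0)^2 \leq C$, so $t_k \to t_0$; the upper semicontinuity of $\overline{u}'$ along sequences with $t_k\leq t_0$ combined with the strict maximality at $x_0$ then forces $x_k\to x_0$ and $u^{\theta_k}(x_k,t_k) \to \phi(x_0) > 0$.

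Applying the viscosity subsolution inequality for $u^{\theta_k}$ with test function $\Phi_{M_k}$ at $(x_k,t_k)$ gives
\[
2M_k(t_k - t_0) \leq H_1(x_k,t_k,\nabla\phi(x_k)) + \theta_k\,\beta(u^{\theta_k}(x_k,t_k))\,h(\nabla\phi(x_k)).
\]
Dividing by $\theta_k$, using $|2M_k(t_k-t_0)|/\theta_k \leq 2\sqrt{M_k C}/\theta_k \to 0$ (from the rate choice) and $|H_1|/\theta_k \to 0$ (by \eqref{itm7}), and passing to the limit via continuity, I obtain $\beta(\phi(x_0))\,h(\nabla\phi(x_0))\geq 0$; positivity of $\beta(\phi(x_0))$ then forces $h(\nabla\phi(x_0))\geq 0$, and \eqref{eq:hprop2.2} yields the required $|\nabla\phi(x_0)|\leq 1$. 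The main technical obstacle is to guarantee that the viscosity inequality for $u^{\theta_k}$ is genuinely available at $(x_k,t_k)$ when $t_k = t_0$ lies on the time-boundary of $K$; the standard remedy is to augment $\Phi_{M_k}$ with a smoothed one-sided penalty of the form $c(t-t_0)_+$, with $c$ large enough to absorb the Lipschitz-in-time constant of $u^{\theta_k}$, which extends $(x_k,t_0)$ to a maximum on a two-sided time neighborhood while leaving the relevant left $t$-derivative of the test function at $t_0$ unchanged.
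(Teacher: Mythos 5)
Your overall skeleton is the same as the paper's: perturb the spatial test function by a time penalization, extract maximizers of $u^{\theta_k}$ minus the perturbed test function, apply the viscosity subsolution inequality, divide by $\theta_k$ and pass to the limit to get $\beta(\overline{u}'(x_0,t_0))\,h(\nabla\phi(x_0))\ge 0$, then use positivity of $\beta$ and \eqref{eq:hprop2.2}. The paper uses the penalization $\phi^{\theta}(x,t)=\psi(x)-\sqrt{\theta}\,(t-\hat t)$ (which blows up to $+\infty$ for $t<\hat t$ in the relaxed limit and thus automatically localizes the maximizers to the slice $t=\hat t$ and identifies the limit of $u^{\theta_j}(x_j,t_j)$ with $\overline{u}'$), whereas you use $M_k(t-t_0)^2$ with rates coupled to a realizing sequence; both are workable and your convergence analysis of $(x_k,t_k)$ and of $u^{\theta_k}(x_k,t_k)$ is correct.

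The genuine gap is the point you flag at the end and then patch incorrectly: the maximizer over $K=\overline{B_r(x_0)}\times[t_0-r,t_0]$ will in general sit at $t_k=t_0$, on the time-boundary, where Definition \ref{defi1} does not directly provide the viscosity inequality. Your proposed remedy --- a \emph{smoothed} one-sided penalty $c(t-t_0)_+$ with $c$ ``absorbing the Lipschitz-in-time constant of $u^{\theta_k}$'' while ``leaving the left $t$-derivative at $t_0$ unchanged'' --- cannot work: any $C^1$ function vanishing on $\{t\le t_0\}$ has derivative $0$ at $t_0$ and hence equals $o(t-t_0)$ as $t\downarrow t_0$, so it cannot dominate the forward-in-time growth of $u^{\theta_k}$, whose time-Lipschitz constant is moreover of order $\theta_k$ (the barriers $u_0\pm Kt$ of Theorem \ref{the2} require $K\gtrsim\theta$); keeping the kink instead makes the test function non-$C^1$ and forces you to justify the inequality with a one-sided derivative, which is exactly the problem you were trying to avoid. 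The correct and standard resolution --- the one the paper invokes --- is that the viscosity property of $u^{\theta}$ extends up to the terminal time $t=t_0$ (see \cite[Section 7]{Chen_Giga_Goto}), so the subsolution inequality $\partial_t\Phi_{M_k}(x_k,t_0)\le H_1+\theta_k\beta h$ is legitimately available at a maximum over $\mathbf{R}^n\times(0,t_0]$ attained at $t_k=t_0$. With that substitution (and the symmetric statement for minima in the supersolution case), the rest of your argument goes through.
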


 
\begin{proof}
Fix $\hat{t} \in (0,T)$ and let us prove that
$\overline{u}'(\cdot,\hat{t})$ is a viscosity subsolution of \eqref{eq:Eik_UD+} in $D^+_{\hat{t}}$.

1.
We first introduce an upper half-relaxed limit of $u^{\theta}$ 
in $\mathbf{R}^n \times (0,\hat{t}]$.
For $(x,t) \in \mathbf{R}^n \times (0,\hat{t}]$ we define
\[ \overline{v}(x,t)
:=\limsup_{\begin{subarray}{c}(y,s,\theta) \to (x,t,\infty) \\ s\leq \hat{t} \end{subarray}} u^{\theta}(y,s), \]
which is an upper semicontinuous function on $\mathbf{R}^n \times (0,\hat{t}]$.
By definition we have
\[ \overline{v}(x,t)=\begin{cases} 
\overline{u}(x,t) & \mbox{if} \ t<\hat{t}, \\
\overline{u}'(x,t) & \mbox{if} \ t=\hat{t}.
\end{cases} \]

2.
Take $z \in D^+_{\hat{t}}$ and $\psi \in C^1(\mathbf{R}^n)$ such that 
$\overline{u}'(\cdot,\hat{t})-\psi$ attains a maximum at $z$ over $\mathbf{R}^n$.
As usual we may assume that this is a strict maximum, 
and note that,
by \eqref{eq:duubar_gen} and \eqref{eq:limits_general},
\begin{equation} 
0<d(z,\hat{t})\leq \underline{u}(z,\hat{t})\leq \overline{u}'(z,\hat{t}). 
\label{eq:baru_plus}
\end{equation}
We now define $\phi^{\theta}(x,t):=\psi(x)-\sqrt{\theta}(t-\hat{t})$ and
\[ \phi(x,t):=\begin{cases} 
+\infty & \mbox{if} \ t<\hat{t}, \\
\psi(x) & \mbox{if} \ t=\hat{t}.
\end{cases} \]
Then $\overline{v}-\phi$ attains its strict maximum at $(z,\hat{t})$ over $\mathbf{R}^n \times (0,\hat{t}]$,
and $u^{\theta}-\phi^{\theta} \to \overline{v}-\phi$ 
in the sense of the upper half-relaxed limit on $\mathbf{R}^n \times (0,\hat{t}]$.
Thus, by \cite[Lemma 2.2.5]{Giga.See} there exist sequences 
$\{ \theta_j \}_{j=1}^{\infty} \subset (0,\infty)$ and 
$\{ (x_j,t_j) \}_{j=1}^{\infty} \subset \mathbf{R}^n \times (0,\hat{t}]$
such that $\theta_j \to \infty$, $(x_j,t_j)\to (z,\hat{t})$ and 
$(u^{\theta_j}-\phi^{\theta_j})(x_j,t_j) \to (\overline{v}-\phi)(z,\hat{t})$ as $j \to \infty$.

We now claim 
\begin{equation} 
\overline{u}'(z,\hat{t})
=\lim_{j \to \infty} u^{\theta_j}(x_j,t_j). \label{eq:lim_u_j}
\end{equation}
Observe
\begin{align*}
u^{\theta_j}(x_j,t_j)
&=\{ (u^{\theta_j}-\phi^{\theta_j})(x_j,t_j)-(\overline{v}-\phi)(z,\hat{t}) \}
 +\phi^{\theta_j}(x_j,t_j)+(\overline{v}-\phi)(z,\hat{t}) \\
&=\{ (u^{\theta_j}-\phi^{\theta_j})(x_j,t_j)-(\overline{v}-\phi)(z,\hat{t}) \}
 +\{ \psi(x_j)-\psi(z) \} + \overline{u}'(z,\hat{t})-\sqrt{\theta}(t_j-\hat{t}) \\
&\geq \{ (u^{\theta_j}-\phi^{\theta_j})(x_j,t_j)-(\overline{v}-\phi)(z,\hat{t}) \}
 +\{ \psi(x_j)-\psi(z) \} + \overline{u}'(z,\hat{t}).
\end{align*}
This implies  
$\displaystyle\liminf_{j \to \infty} u^{\theta_j}(x_j,t_j) \geq \overline{u}'(z,\hat{t})$.
The opposite relation
$\displaystyle\limsup_{j \to \infty} u^{\theta_j}(x_j,t_j) \leq \overline{u}'(z,\hat{t})$
follows from the definition of $\overline{u}'$,
and therefore \eqref{eq:lim_u_j} is proved. 

3.
Since $u^{\theta}$ is a viscosity solution of \eqref{eq6}
in $\mathbf{R}^n \times (0,\hat{t})$
and since the viscosity property is extended up to the terminal time $t=\hat{t}$ (\cite[Section 7]{Chen_Giga_Goto}),
we have
\[ \phi^{\theta_j}_t(x_j,t_j) 
\leq H_1(x_j,t_j,\nabla\phi^{\theta_j}(x_j,t_j))
 +\theta \beta(u^{\theta_j}(x_j,t_j))h(\nabla\phi^{\theta_j}(x_j,t_j)). \]
By the definition of $\phi^{\theta}$, this is equivalent to
\[ -\sqrt{\theta} 
\leq H_1(x_j,t_j,\nabla\psi(x_j))
 +\theta \beta(u^{\theta_j}(x_j,t_j))h(\nabla\psi(x_j)). \]
Dividing both the sides by $\theta$ and sending $\theta \to \infty$, we obtain
\[ 0 \leq \beta(\overline{u}'(z,\hat{t}))h(\nabla\psi(z)), \]
where we have used \eqref{eq:lim_u_j}.
Since $\beta (\overline{u}'(z,\hat{t}))>0$ by \eqref{eq:baru_plus},
using the assumption on $h$, we conclude that $|\nabla \psi(z)| \leq 1$.
\end{proof}

As a consequence of Proposition \ref{prop:sub_fixed_t}, we obtain

\begin{theo}\label{thm:mainconv_discont}
Assume \eqref{eq:hprop2}. Then the following hold.
\begin{enumerate}[label=(\arabic*) ,ref=\arabic*]
	\item\label{itm:compar_1}$\underline{u}(\cdot,t)=d(\cdot,t)$ on $\overline{D^+_t}$ and
$\overline{u}(\cdot,t)=d(\cdot,t)$ on $\overline{D^-_t}$ for every $t \in (0,T)$.
	\item\label{itm:compar_2} $\overline{u}'=\underline{u}'=d$ in $\mathbf{R}^n \times (0,T)$, i.e.,
\[ \lim_{\begin{subarray}{c}(y,s,\theta) \to (x,t,\infty) \\ s\leq t \end{subarray}} u^{\theta}(y,s)=d(x,t) 
\quad \mbox{for all $(x,t) \in \mathbf{R}^n \times (0,T)$}. \]
\item\label{itm:compar_3}$\overline{u|_t}=\underline{u|_t}=d(\cdot, t)$ in $\mathbf{R}^n$
for every $t \in (0,T)$, i.e.,
$u^{\theta}(\cdot,t)$ converges to $d(\cdot,t)$
locally uniformly in $\mathbf{R}^n$ for every $t \in (0,T)$.
 \end{enumerate}
\end{theo}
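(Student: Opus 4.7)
The strategy is to apply the eikonal comparison principle of Lemma~\ref{lem:CP_eik} to $\overline{u}'(\cdot, t)$ and $\underline{u}'(\cdot, t)$, using the subsolution/supersolution property from Proposition~\ref{prop:sub_fixed_t}, and then read off the three statements from the chain \eqref{eq:limits_general}.

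For part~(\ref{itm:compar_1}), fix $t \in (0,T)$ and focus on the $D^+_t$ statement; the $D^-_t$ statement is symmetric, working with $\underline{u}'$ and the lower barrier $u^{\theta} \geq -l e^{L_1 t} d_-$. Lemma~\ref{Lemma:comparison_discont}(\ref{item2}) already gives $d(\cdot, t) \leq \underline{u}(\cdot, t)$ on $D^+_t$, and \eqref{eq:limits_general} gives $\underline{u} \leq \overline{u}'$, so it suffices to establish the reverse inequality $\overline{u}'(\cdot, t) \leq d(\cdot, t)$ on $D^+_t$. By Proposition~\ref{prop:sub_fixed_t}, $\overline{u}'(\cdot, t)$ is a viscosity subsolution of $|\nabla u| = 1$ on the open set $\Omega := \{w(\cdot, t) > 0\}$. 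The key remaining step is the boundary inequality $\overline{u}'(\cdot, t) \leq 0$ on $\partial \Omega \subset \Gamma_t$: from the upper barrier $u^{\theta} \leq l e^{L_1 t} d_+$ of Proposition~\ref{prop:Bar_dist} we would get, for $x \in \partial \Omega$,
\[ \overline{u}'(x, t) \leq l e^{L_1 t} \limsup_{\begin{subarray}{c}(y,s) \to (x,t) \\ s \leq t \end{subarray}} d_+(y, s), \]
and then invoke the continuity of $d$ from below in time (established in Section~\ref{Section:continuity_distance} via Theorem~\ref{thm:cont_d}) to conclude that the right-hand side equals $l e^{L_1 t} d_+(x, t) = 0$. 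Now Lemma~\ref{lem:CP_eik}(\ref{item:1side_dist}) applied on $\Omega$ yields $\overline{u}'(\cdot, t) \leq d_{\Omega}$ in $\Omega$, and since the nearest point of $\Gamma_t$ to any $x \in \Omega$ must lie on $\partial \Omega$ (argue along the segment joining $x$ to a minimizer), one has $d_{\Omega} = d(\cdot, t)$ on $\Omega$. This closes the chain to the equality $\underline{u}(\cdot, t) = d(\cdot, t)$ on $\Omega$; the extension to the closure $\overline{D^+_t}$ then uses \eqref{eq:uubar_z}.

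For part~(\ref{itm:compar_2}), combining the equalities from part~(\ref{itm:compar_1}) with \eqref{eq:limits_general} forces $\overline{u}' = \underline{u}' = d$ on $D^+ \cup D^-$. At a point $(x,t) \in \Gamma$ the same upper-barrier estimate gives $\overline{u}'(x, t) \leq 0$, while the dual lower-barrier bound produces $\underline{u}'(x, t) \geq 0$; since $\underline{u}' \leq \overline{u}'$ pointwise, the common value is $0 = d(x, t)$. Part~(\ref{itm:compar_3}) is then immediate by sandwiching via the middle of \eqref{eq:limits_general}, namely $\underline{u}'(x, t) \leq \underline{u|_t}(x) \leq \overline{u|_t}(x) \leq \overline{u}'(x, t)$: the outer terms already equal $d(x, t)$ by part~(\ref{itm:compar_2}), and locally uniform convergence at each fixed $t$ follows from the standard criterion for half-relaxed limits applied to the sequence $\{u^{\theta}(\cdot, t)\}_{\theta}$.

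The main obstacle is the boundary inequality $\overline{u}'(\cdot, t) \leq 0$ on $\partial D^+_t$, and this is precisely where the unilateral nature of $\overline{u}'$ (limsup only over $s \leq t$) is indispensable: for an unrestricted upper half-relaxed limit the distance function may jump upward at extinction times, which would break the barrier estimate; the continuity-from-below property of $d$ rules this out and makes the eikonal comparison principle applicable. Everything else in the proof amounts to bookkeeping around the chain of inequalities in \eqref{eq:limits_general}.
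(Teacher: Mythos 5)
Your proposal is correct and follows essentially the same route as the paper: establish $\overline{u}'=\underline{u}'=0$ on $\Gamma$ from the barrier estimate $-Ld_-\le u^{\theta}\le Ld_+$ together with the continuity of $d$ from below in time, apply Proposition \ref{prop:sub_fixed_t} and the eikonal comparison principle (Lemma \ref{lem:CP_eik}) to get $\overline{u}'\le d$ in $D^+_t$ and $d\le\underline{u}'$ in $D^-_t$, and then close the chain of inequalities \eqref{eq:limits_general} using Lemma \ref{Lemma:comparison_discont}. The only additions are bookkeeping details (the identification $d_{\Omega}=d(\cdot,t)$ on $D^+_t$) that the paper leaves implicit.
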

\begin{proof}
1.
We first note that \eqref{eq:Lem_compar_disc_1} yields 
\begin{align}
\overline{u}'=\underline{u}'&=0 \quad \mbox{on} \ \Gamma, 
\label{eq:uvponG} \\
\underline{u|_t}=\overline{u|_t}&=0 \quad \mbox{on} \ \Gamma.
\label{eq:utonG}
\end{align}
Indeed, for $(x,t) \in \Gamma$, 
taking the upper and lower half-relaxed limit from below in time in \eqref{eq:Lem_compar_disc_1},
we see that $\overline{u}'(x,t)=\underline{u}'(x,t)=0$
since $d$ is continuous from below in time by Theorem \ref{thm:cont_d} \eqref{itm:continuity_2}.
Similarly, \eqref{eq:utonG} follows from the continuity of $d(\cdot, t)$.
Thus \eqref{itm:compar_2} and \eqref{itm:compar_3} were proved on $\Gamma$.
The equalities in \eqref{itm:compar_1} on $\partial D^+_t$ or $\partial D^-_t$
are consequences of Lemma \ref{Lemma:comparison_discont} \eqref{item3}.

2.
It remains to prove \eqref{itm:compar_1}--\eqref{itm:compar_3} in $D^+_t$ and $D^-_t$.
Recall that $\overline{u}'(\cdot,t)$ and $\underline{u}'(\cdot,t)$ are,
respectively, a viscosity subsolution of \eqref{eq:Eik_UD+} in $D^+_t$
and a viscosity supersolution of \eqref{eq:Eik_UD-} in $D^-_t$ by Proposition \ref{prop:sub_fixed_t}.
Since \eqref{eq:uvponG} holds,
the comparison result (Lemma \ref{lem:CP_eik} \eqref{item:1side_dist}) implies that
\begin{align}
\overline{u}'(\cdot,t) \le d(\cdot,t) \quad \mbox{in $D^+_t$,}\label{eq:417}\\
 d(\cdot ,t) \le \underline{u}'(\cdot,t) \quad \mbox{in $D^-_t$.} \label{eq:418}
\end{align}
Combining \eqref{eq:duubar_gen}, \eqref{eq:limits_general} and \eqref{eq:417},
we obtain
\[ 0<d(\cdot,t)=\underline{u}(\cdot ,t)=\underline{u}'(\cdot ,t)
=\underline{u|_t}=\overline{u|_t}=\overline{u}'(\cdot,t) 
\quad \mbox{in $D^+_t$.} \]
In the same manner, we see 
\[ 0>d(\cdot,t)=\underline{u}'(\cdot ,t)=\underline{u|_t}
=\overline{u|_t}=\overline{u}'(\cdot,t)=\overline{u}(\cdot ,t) 
\quad \mbox{in $D^-_t$.} \]
The two relations above conclude the proof.
\end{proof}
This concludes the proof of Theorem \ref{Theor1} \eqref{itm:main1} and \eqref{itm:main1.5}.

\section{Continuity of distance functions}\label{Section:continuity_distance}
Throughout this section we study only non-negative distance functions.
Namely, we assume $D^- =\emptyset$ so that 
$d(x,t)=\mathrm{dist}(x,\Gamma_t) \geq 0$ 
for all $(x,t) \in \mathbf{R}^n \times (0,T)$.
Also, we simply write $D_t=D^+_t$ and $D=D^+$.
In the general case where $d$ can take negative values,
we decompose $d$ as $d=d_+-d_-$ and apply the following results to $d_+$ and $d_-$. 
\subsection{Finite Propagation}\label{Section:finite_propagat}
In order to study the continuity of distance functions, we first prepare a property of finite propagation for the Hamilton-Jacobi equation \eqref{eq:init}. For this property, the assumption \eqref{itm6.1}, the Lipschitz continuity of $H_1$ in $p$ plays an important role, though we omit the details in this paper.\par 
Let $(x,t) \in \mathbf{R}^n \times (0,T)$ and $r>0$. 
We define a cone as 
\[ \mathcal{C}_{(x,t)}^r:=
\bigcup_{0<\tau <r} B_{r-\tau }(x) \times \left\{ t+\frac{\tau }{L_2} \right\}. \]

\begin{theo}[Local comparison principle]\label{thm:LocCP}
Let $(x,t) \in \mathbf{R}^n \times (0,T)$, $r>0$ and set $\mathcal{C}:=\mathcal{C}_{(x,t)}^r$.
If $u,v \in C(\overline{\mathcal{C}})$ are, respectively, 
a viscosity sub- and supersolution of \eqref{eq:initeq} in $\mathcal{C}$
and $u(\cdot,t) \leq v(\cdot,t)$ in $\overline{B_r(x)}$,
then $u \leq v$ in $\overline{\mathcal{C}}$.
\end{theo}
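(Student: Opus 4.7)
The plan is to argue by contradiction using a doubling-of-variables technique adapted to the cone geometry. The starting remark is that the homogeneity \eqref{itm5} combined with the Lipschitz estimate \eqref{itm6.1} implies $H_1(y,s,0)=0$ and hence $|H_1(y,s,p)| \le L_2 |p|$ for every $(y,s,p)$; the constant $L_2$ is thus a propagation speed, and it matches exactly the slope of the cone $\mathcal{C}$.

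Suppose $M := \max_{\overline{\mathcal{C}}}(u-v) > 0$. Since $\overline{\mathcal{C}}$ is compact and $u \le v$ on the base $\overline{B_r(x)} \times \{t\}$, this maximum is attained at a point with time coordinate strictly larger than $t$. To force the maximizer into the interior of $\mathcal{C}$ and to create a strict sub/supersolution gap, I would replace $u$ by $\tilde u := u - \alpha s$ (a viscosity subsolution of $\tilde u_s = H_1(y,s,\nabla \tilde u) - \alpha$ for $\alpha > 0$) and subtract the cone-adapted barrier
\[ g_\eta(y,s) := \frac{\eta}{r - L_2(s-t) - |y-x|}, \qquad \eta > 0, \]
which blows up on the lateral boundary of $\mathcal{C}$ and satisfies the critical identity $\partial_s g_\eta = L_2 |\nabla g_\eta|$. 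For $\eta$ and $\alpha$ sufficiently small, $\sup_{\mathcal{C}}(\tilde u - v - g_\eta)$ remains positive and is attained at an interior point.

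Next, I would double the variables in the standard way and consider
\[ \Phi_\varepsilon(y,s,z,\sigma) := \tilde u(y,s) - v(z,\sigma) - \frac{|y-z|^2}{\varepsilon} - \frac{(s-\sigma)^2}{\varepsilon} - g_\eta(y,s), \]
whose maximum over $\overline{\mathcal{C}} \times \overline{\mathcal{C}}$ is attained at some $(y_\varepsilon, s_\varepsilon, z_\varepsilon, \sigma_\varepsilon)$ lying in the interior of the product cone for $\varepsilon$ small. Writing the viscosity subsolution inequality for $\tilde u$ at $(y_\varepsilon,s_\varepsilon)$ and the supersolution inequality for $v$ at $(z_\varepsilon,\sigma_\varepsilon)$, with $p_\varepsilon := 2(y_\varepsilon-z_\varepsilon)/\varepsilon$, and subtracting, one obtains
\[ \partial_s g_\eta(y_\varepsilon,s_\varepsilon) + \alpha \le H_1\bigl(y_\varepsilon,s_\varepsilon, p_\varepsilon + \nabla g_\eta(y_\varepsilon,s_\varepsilon)\bigr) - H_1(z_\varepsilon,\sigma_\varepsilon, p_\varepsilon). \]
The $p$-increment in the right-hand side is controlled by $L_2 |\nabla g_\eta|$ via \eqref{itm6.1}, the $x$-increment by $L_1 |p_\varepsilon|\,|y_\varepsilon-z_\varepsilon|$ via the homogeneous form of \eqref{itm6}, and the time difference vanishes in the limit by the continuity \eqref{itm4}. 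Using the refined estimate $|y_\varepsilon-z_\varepsilon|^2/\varepsilon \to 0$ (compare the proof of Proposition \ref{prop3}) and the identity $\partial_s g_\eta = L_2 |\nabla g_\eta|$ to cancel the barrier contributions exactly, the limit $\varepsilon \to 0$ produces $\alpha \le 0$, the desired contradiction.

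The main obstacle is precisely this borderline relation $\partial_s g_\eta = L_2 |\nabla g_\eta|$: because the cone has the critical slope, the barrier alone gives only a non-strict inequality, which is why the $\alpha s$ perturbation is essential. A secondary technical issue is that $|p_\varepsilon|$ may be large, so one must carefully combine the quadratic penalty estimate with the geometric Lipschitz bound to dominate the $x$-increment term — the same mechanism that makes the global comparison principle go through for geometric Hamiltonians.
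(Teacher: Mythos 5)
The paper does not actually write out a proof of Theorem \ref{thm:LocCP}; it only cites \cite[Theorem III.3.12, Exercise 3.5]{Bardi-Dolcetta} and \cite[Theorem 5.3]{Achdou_Barles_etc}. Your strategy is the standard cone-of-dependence argument found in those references: a barrier blowing up on the lateral boundary whose time derivative exactly offsets, via \eqref{itm6.1}, the gradient it adds to the test function, plus an $\alpha s$ perturbation to get strictness. The opening observation ($H_1(y,s,0)=0$ by \eqref{itm5} and \eqref{itm4}, hence $|H_1|\le L_2|p|$) is correct, the slope of $\mathcal{C}$ does match $L_2$, the interiority of the maximizer of $\tilde u-v-g_\eta$ and the cancellation $\partial_s g_\eta-L_2|\nabla g_\eta|=0$ are right, and the $x$-increment is correctly killed by the refined estimate $|y_\varepsilon-z_\varepsilon|^2/\varepsilon\to 0$ combined with the homogeneous form of \eqref{itm6}.

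Two points need repair. First, $g_\eta$ is not $C^1$ on the axis $\{y=x\}$ (the denominator contains $-|y-x|$), so it is not an admissible test function if the maximizer lands there; replace $|y-x|$ by $\sqrt{|y-x|^2+\delta^2}$, which turns the identity $\partial_s g_\eta=L_2|\nabla g_\eta|$ into the inequality $\partial_s g_\eta\ge L_2|\nabla g_\eta|$ — and this is exactly the direction your argument needs, so nothing else changes. Second, and more seriously, the claim that "the time difference vanishes in the limit by the continuity \eqref{itm4}" is not justified as written: by \eqref{itm5} and uniform continuity of $H_1$ on $\overline{\mathcal{C}}\times S^{n-1}$ one only gets $|H_1(y,s,p_\varepsilon)-H_1(y,\sigma,p_\varepsilon)|\le|p_\varepsilon|\,\omega(|s-\sigma|)$, and with a single penalization parameter $\varepsilon$ for both space and time one has $|p_\varepsilon|=o(\varepsilon^{-1/2})$ and $|s_\varepsilon-\sigma_\varepsilon|=O(\varepsilon^{1/2})$, so the product $|p_\varepsilon|\,\omega(|s_\varepsilon-\sigma_\varepsilon|)$ need not tend to zero for a general modulus $\omega$. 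The standard fixes are either to penalize time with a separate parameter $\mu$ and send $\mu\to0$ first at fixed $\varepsilon$ (so that $|p_\varepsilon|$ stays bounded while $|s_\varepsilon-\sigma_\varepsilon|\to0$), or not to double the time variable at all and invoke \cite[Lemma 2]{Crandall_Lions}, exactly as the paper does in its sketch of Proposition \ref{prop3}. With these adjustments the proof is complete.
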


See \cite[Theorem III.3.12, (Exercise 3.5)]{Bardi-Dolcetta} or in \cite[Theorem 5.3]{Achdou_Barles_etc} for the proof.
As a consequence of Theorem \ref{thm:LocCP} we obtain

\begin{pro}[Finite propagation]\label{prop:FP_ball}
Let $(x,t) \in \mathbf{R}^n \times (0,T)$ and $r>0$.
\begin{enumerate}[label=(\arabic*) ,ref=\arabic*]
	\item\label{itm:FP_1} If $\overline{B_r(x)} \subset D_t$, 
then $\overline{\mathcal{C}_{(x,t)}^r} \subset D$.
	\item\label{itm:FP_2} If $\overline{B_r(x)} \subset \Gamma_t$, 
then $\overline{\mathcal{C}_{(x,t)}^r} \subset \Gamma$.
 \end{enumerate}
\end{pro}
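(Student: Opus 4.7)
The plan is to observe that the hypotheses give, respectively, the sign conditions $w(\cdot,t)\geq 0$ and $w(\cdot,t)=0$ on the base $\overline{B_r(x)}$ of the cone, and then to apply the local comparison principle (Theorem \ref{thm:LocCP}) against the constant function zero. The whole argument boils down to two observations.

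\textbf{First observation.} The constant function $v\equiv 0$ is a classical (hence viscosity) solution of \eqref{eq:initeq}. Indeed, by the geometric property \eqref{itm5}, evaluating at $p=0$ gives
\[
H_1(x,t,0)=H_1(x,t,\lambda\cdot 0)=\lambda H_1(x,t,0)\quad\text{for all }\lambda>0,
\]
which forces $H_1(x,t,0)=0$. Thus $v_t-H_1(x,t,\nabla v)=0$ everywhere, so zero is simultaneously a viscosity sub- and supersolution of \eqref{eq:initeq}.

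\textbf{Second observation.} Set $\mathcal{C}:=\mathcal{C}_{(x,t)}^r$. (If necessary, replace $r$ by $\min\{r,L_2(T-t)\}$ so that $\overline{\mathcal{C}}\subset \mathbf{R}^n\times[0,T)$; the general case follows by exhausting the cone.) Both $w$ and the constant $0$ are continuous on $\overline{\mathcal{C}}$ and satisfy \eqref{eq:initeq} there, so Theorem \ref{thm:LocCP} applies.

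For \eqref{itm:FP_1}, the hypothesis $\overline{B_r(x)}\subset D_t$ means $w(\cdot,t)\geq 0=0(\cdot,t)$ on $\overline{B_r(x)}$. Applying Theorem \ref{thm:LocCP} with subsolution $0$ and supersolution $w$ yields $0\leq w$ on $\overline{\mathcal{C}}$, i.e.\ $\overline{\mathcal{C}}\subset D$.

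For \eqref{itm:FP_2}, the hypothesis $\overline{B_r(x)}\subset\Gamma_t$ means $w(\cdot,t)=0$ on $\overline{B_r(x)}$. Apply Theorem \ref{thm:LocCP} first with subsolution $0$ and supersolution $w$ to obtain $w\geq 0$ on $\overline{\mathcal{C}}$, and then with subsolution $w$ and supersolution $0$ to obtain $w\leq 0$ on $\overline{\mathcal{C}}$. Combining gives $w\equiv 0$ on $\overline{\mathcal{C}}$, i.e.\ $\overline{\mathcal{C}}\subset\Gamma$.

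There is no real obstacle beyond the routine point of fitting the cone inside the time strip $(0,T)$ (handled by shrinking $r$) and the mild observation that zero solves the equation thanks to geometricity \eqref{itm5}; once those are in place, the conclusion is an immediate double application of the local comparison principle.
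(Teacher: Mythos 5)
Your overall strategy is the same as the paper's: constants are solutions of \eqref{eq:initeq} by geometricity (\ref{itm5}), so one applies the local comparison principle (Theorem \ref{thm:LocCP}) on the cone against a constant, twice in part (\ref{itm:FP_2}). Part (\ref{itm:FP_2}) of your argument is exactly the paper's proof and is fine.

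There is, however, a genuine gap in part (\ref{itm:FP_1}). Throughout Section \ref{Section:continuity_distance} the set $D_t$ is the \emph{strictly} positive region $\{w(\cdot,t)>0\}$ (this is how the proposition is used: e.g.\ in the proof of Theorem \ref{thm:cont_d} \eqref{itm:continuity_2} the conclusion $z\in D_t$ is played off against $z\in\Gamma_t$ to reach a contradiction, and the standing assumption $D^-=\emptyset$ together with \eqref{eq:sec_2_1} only makes sense with the strict inequality). Comparing $w$ with the subsolution $0$ only yields $w\geq 0$ on $\overline{\mathcal{C}}$, i.e.\ $\overline{\mathcal{C}}\subset D\cup\Gamma$, which is not the assertion and is useless for the later applications. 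The correct comparison function is the \emph{positive} constant $\alpha:=\min_{\overline{B_r(x)}\times\{t\}}w>0$ (the minimum is attained and positive because $w(\cdot,t)$ is continuous and strictly positive on the compact set $\overline{B_r(x)}$); since $\alpha$ is again a solution by \eqref{itm5}, Theorem \ref{thm:LocCP} gives $w\geq\alpha>0$ on $\overline{\mathcal{C}}$, which is what is needed. With this one-line repair your proof coincides with the paper's; the auxiliary observations (that $H_1(x,t,0)=0$ by homogeneity, and that the cone must be truncated at time $T$) are correct and unproblematic.
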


\begin{proof}
Let $w$ be the solution of \eqref{eq:init}.

\eqref{itm:FP_1}
Set $\alpha:=\displaystyle\min_{\overline{B_r(x)} \times \{ t \}} w>0$,
and define $u(x,t):=\alpha$, which is a constant function satisfying 
$u(\cdot ,t)\leq w(\cdot ,t)$ in $\overline{B_r(x)}$.
Moreover, $u$ is a solution of \eqref{eq:initeq} 
by the geometricity of $H_1$.
Therefore Theorem \ref{thm:LocCP} implies that $u \leq w$ 
in $\overline{\mathcal{C}_{(x,t)}^r}$.
The positivity of $u$ implies the conclusion.

\eqref{itm:FP_2}
The proof is similar to \eqref{itm:FP_1}.
We compare $w$ with $u(x,t):=0$ both from above and from below 
to conclude that $0=u \leq w \leq u=0$ in $\overline{\mathcal{C}_{(x,t)}^r}$.
\end{proof}

\subsection{Continuity properties}
We first introduce a notion of extinction points.

\begin{defi}[Extinction point]\label{defn:Expt}
Let $x \in \Gamma _t$.
We say that $x$ is an {\em extinction point} 
if there exist $\varepsilon,\delta >0$ such that 
$\overline{B_{\varepsilon}(x)} \times (t,t+\delta] \subset D$.
\end{defi}

For example the point $0 \in \Gamma_1$ in Example \ref{exmp:sdisc}
is an extinction point.
We remark that $x \in \Gamma _t$ is non-extinction point if and only if
there exists a sequence $\{ (x_j,t_j) \}_{j=1}^{\infty}$ such that
$(x_j,t_j) \to (x,t)$ as $j \to \infty$,
$x_j \in \Gamma_{t_j}$ and $t_j>t$ for all $j$.
Define $E_t \subset \mathbf{R}^n$ as
the set of all extinction points at time $t \in (0,T)$
and $N_t(x)$ as the set of all the nearest points from $x \in \mathbf{R}^n$ to $\Gamma_t$, i.e.,
\[ N_t(x):=\{ z \in \Gamma _t \ | \ d(x,t)=|x-z| \}. \]
Note that we always have $N_t(x) \neq \emptyset$ by \eqref{eq:sec_2_1}.

\begin{theo}[Continuity properties of the distance function]\label{thm:cont_d}
\begin{enumerate}[label=(\arabic*) ,ref=\arabic*]
	\item\label{itm:continuity_1} $d$ is lower semicontinuous in $\mathbf{R}^n \times (0,T)$.
	\item\label{itm:continuity_2} $d$ is continuous from below in time, i.e.,
\[ d(x,t)=\lim_{\begin{subarray}{c} (y,s) \to (x,t) \\ s\leq t \end{subarray}}d(y,s) 
\quad \mbox{for all} \ (x,t)\in \mathbf{R}^n \times (0,T). \]
\item\label{itm:continuity_3}Let $(x,t) \in \mathbf{R}^n \times (0,T)$.
Then $d$ is continuous at $(x,t)$ if and only if 
$N_t(x) \setminus E_t \neq \emptyset$.
 \end{enumerate}
\end{theo}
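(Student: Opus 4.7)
The proof rests on Proposition \ref{prop:FP_ball} (finite propagation). The first step I would carry out is to establish a one-sided Hausdorff-type estimate: for every $z \in \Gamma_t$ and every $s < t$,
\begin{equation*}
\mathrm{dist}(z,\Gamma_s) \le L_2 (t-s).
\end{equation*}
If this failed, then $\overline{B_r(z)} \cap \Gamma_s = \emptyset$ for some $r > L_2(t-s)$, and since $D^-=\emptyset$ this forces $\overline{B_r(z)} \subset D_s$; Proposition \ref{prop:FP_ball} \eqref{itm:FP_1} then pushes the cone $\overline{\mathcal{C}_{(z,s)}^{r}}$ into $D$, contradicting $z \in \Gamma_t$ once we evaluate at time $t$.

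Given this estimate, part \eqref{itm:continuity_1} follows from a compactness argument using continuity of the solution $w$: if $(y_n,s_n) \to (x,t)$ and $d(y_n,s_n) \to r$, pick $z_n \in \Gamma_{s_n}$ realising the distance (these are bounded since $|z_n - x| \le d(y_n,s_n) + |y_n - x|$), extract a convergent subsequence $z_n \to z$, and pass to the limit in $w(z_n,s_n)=0$ to find $z \in \Gamma_t$ with $d(x,t) \le |x-z| \le r$. For part \eqref{itm:continuity_2}, by \eqref{itm:continuity_1} only the matching upper semicontinuity from below in time remains: choosing $z^* \in N_t(x)$ and invoking the Hausdorff-type bound produces, for each $s \le t$, a point $z' \in \Gamma_s$ with $|z^* - z'| \le L_2(t-s)$, and the triangle inequality gives $d(y,s) \le |y-x| + d(x,t) + L_2(t-s)$.

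For the sufficient direction in \eqref{itm:continuity_3}, assume $z^* \in N_t(x) \setminus E_t$. In view of \eqref{itm:continuity_1} and \eqref{itm:continuity_2}, continuity at $(x,t)$ reduces to upper semicontinuity from above in time, and via $d(y,s) \le |y-x| + |x-z^*| + \mathrm{dist}(z^*,\Gamma_s)$ this amounts to proving $\mathrm{dist}(z^*,\Gamma_s) \to 0$ as $s \to t^+$. If this limit failed, I could extract $s_k \downarrow t$ and $\varepsilon > 0$ with $B_\varepsilon(z^*) \cap \Gamma_{s_k} = \emptyset$, hence $\overline{B_{\varepsilon/2}(z^*)} \subset D_{s_k}$; applying Proposition \ref{prop:FP_ball} \eqref{itm:FP_1} from each $(z^*,s_k)$ and letting $s_k \downarrow t$ yields $\overline{B_{\varepsilon/4}(z^*)} \times (t,t+\delta] \subset D$ for some $\delta > 0$, contradicting $z^* \notin E_t$.

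The necessary direction of \eqref{itm:continuity_3} is the main obstacle, as it requires extracting quantitative geometric information from the hypothesis $N_t(x) \subset E_t$. I would combine two compactness steps. First, $N_t(x)$ is a closed bounded subset of $\Gamma_t$, hence compact, so the family $\{B_{\varepsilon_z}(z) \times (t,t+\delta_z] \subset D\}_{z \in N_t(x)}$ admits a finite subcover, producing an open neighbourhood $U$ of $N_t(x)$ and $\delta > 0$ with $U \times (t,t+\delta] \subset D$, i.e., $\Gamma_s \cap U = \emptyset$ for all $s \in (t,t+\delta]$. Second, compactness of $\Gamma_t \cap \overline{B_{d(x,t)+1}(x)}$ together with a standard contradiction argument gives: for every $\nu > 0$ there exists $\eta > 0$ such that every $z \in \Gamma_t$ with $|x-z| \le d(x,t)+\eta$ lies within $\nu$ of $N_t(x)$. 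Given these, for $s > t$ close to $t$, any $z' \in \Gamma_s$ with $|x-z'|$ close to $d(x,t)$ would, via the Hausdorff-type bound $\mathrm{dist}(z',\Gamma_t) \le L_2(s-t)$ and the second compactness statement, lie within a small distance of $N_t(x)$; yet by the first statement $z'$ must lie outside $U$, a contradiction. Hence $d(x,s) > d(x,t) + \eta$ for all $s > t$ sufficiently close to $t$, defeating continuity.
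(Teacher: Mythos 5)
Your proposal is correct; every step checks out, including the two compactness arguments in the necessary direction of \eqref{itm:continuity_3}, and the whole proof runs on the same engine as the paper's, namely the finite-propagation cones of Proposition \ref{prop:FP_ball}. The organisation, however, is genuinely different in two places. First, you isolate the one-sided Hausdorff estimate $\mathrm{dist}(z,\Gamma_s)\le L_2(t-s)$ for $z\in\Gamma_t$, $s<t$, as a reusable lemma and then derive \eqref{itm:continuity_2} and the reductions in \eqref{itm:continuity_3} from it by direct triangle-inequality estimates; the paper instead reproves the relevant cone inclusion inside each contradiction argument (and simply cites Evans--Soner--Souganidis for \eqref{itm:continuity_1}, where you give a short direct compactness proof via the continuity of $w$). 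Second, and more substantially, for the implication ``$N_t(x)\subset E_t$ implies discontinuity'' the paper argues in the contrapositive: assuming continuity, it extracts nearest points $z_j\in N_{t_j}(x)$ for $t_j\downarrow t$ and passes to a limit to produce a non-extinction element of $N_t(x)$ — a soft sequential argument. You prove the implication directly and quantitatively: a finite subcover of the compact set $N_t(x)$ by extinction cylinders yields a neighbourhood $U$ with $\Gamma_s\cap U=\emptyset$ for $s\in(t,t+\delta]$, a second compactness step shows that almost-nearest points of $\Gamma_t$ lie near $N_t(x)$, and the Hausdorff bound then forces $d(x,s)\ge d(x,t)+\eta$ for $s>t$ near $t$. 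Your route is longer but buys an explicit jump estimate for $d$ across a discontinuity, whereas the paper's sequential argument is shorter but purely qualitative; both are valid.
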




\begin{proof}
\eqref{itm:continuity_1}
The proof can be found in \cite[Proposition 2.1]{Evans-Soner-Souganidis.1992}.

\eqref{itm:continuity_2}
1.
Suppose by contradiction that 
$d$ is not continuous at $(x,t)$ from below in time.
Since $d$ is lower semicontinuous by \eqref{itm:continuity_1}, we would have a sequence 
$\{ (x_j,t_j) \}_{j=1}^{\infty}$ such that
$(x_j,t_j) \to (x,t)$ as $j \to \infty$, $t_j <t$ and
\[ \lim_{j\to \infty}d(x_j,t_j)>d(x,t). \]
Set $\alpha:=\{ \lim_{j\to \infty}d(x_j,t_j)-d(x,t) \}/4 >0$.
Without loss of generality we may assume that 
$d(x_j,t_j)-d(x,t)\geq 3\alpha$ and $|x_j-x| \leq \alpha$
for all $j \geq 1$.

2.
Take any $z \in N_t(x)$.
We claim that 
\begin{equation} 
d(y,t_j)\geq \alpha \quad 
\mbox{for all $y\in \overline{B_{\alpha}(z)}$ and $j\geq 1$.} 
\label{eq:dal_claim}
\end{equation}
Since $d(\cdot, t_j)$ is a Lipschitz continuous function
with the Lipschitz constant 1, we calculate
\begin{align*}
d(y,t_j)
&\geq d(x_j,t_j)-|x_j-y| \\
&\geq \{ d(x,t)+3\alpha \}-(|x_j-x|+|x-z|+|z-y|) \\
&\geq \{ d(x,t)+3\alpha \}-(\alpha+|x-z|+\alpha) \\
&\geq \alpha,
\end{align*}
which yields \eqref{eq:dal_claim}.
By \eqref{eq:dal_claim} we have 
$\overline{B_{\alpha}(z)} \times \{ t_j \} \subset D$.
Thus Proposition \ref{prop:FP_ball} \eqref{itm:FP_1} implies that
\begin{equation}
\overline{\mathcal{C}_{(z,t_j)}^{\alpha}} \subset D.
\label{eq:cone_D}
\end{equation}
Since $t_j \uparrow t$ as $j \to \infty$, 
we have $(z,t) \in \overline{\mathcal{C}_{(z,t_j)}^{\alpha}}$ for $j$ large,
and therefore $z \in D_t$ by \eqref{eq:cone_D}.
However, this contradicts the fact that $z \in \Gamma_t$.

\eqref{itm:continuity_3}
1.
We first assume that $d$ is continuous at $(x,t)$.
Take any sequence $\{ (x_j,t_j) \}_{j=1}^{\infty}$ such that
$(x_j,t_j) \to (x,t)$ as $j \to \infty$ and $t_j >t$.
By continuity we have $d(x_j,t_j) \to d(x,t)$ as $j\to \infty$.
We now take $z_j \in N_{t_j}(x_j)$ for each $j$.
Then $\{ z_j \}$ is bounded.
Indeed, since 
$|z_j|\leq |x|+|x-x_j|+|x_j-z_j|$
and $|x-x_j| \to 0$, $|x_j-z_j|=d(x_j,t_j) \to d(x,t)$ as $j \to \infty$,
we see that $\{ z_j \}$ is bounded.
From this $z_j$ subsequently converges to some $\bar{z}$ as $j \to \infty$,
where we use again the index $j$. It is easy to see that $\bar{z}\in \Gamma _t$.

Let us show $\bar{z} \in N_t(x) \setminus E_t$.
Taking the limit in $d(x_j,t_j)=|x_j-z_j|$,
we obtain $d(x,t)=|x-\bar{z}|$,
which implies that $\bar{z} \in N_t(x)$.
Also, 
since $z_j \in \Gamma_{t_j}$ and $t_j \downarrow t$ as $j \to \infty$,
it follows that $\bar{z}$ is not an extinction point,
and hence we conclude that $N_t(x) \setminus E_t \neq \emptyset$.

2.
We next assume that $d$ is not continuous at $(x,t)$.
By \eqref{itm:continuity_1} and \eqref{itm:continuity_2} we have some sequence 
$\{ (x_j,t_j) \}_{j=1}^{\infty}$ such that
$(x_j,t_j) \to (x,t)$ as $j \to \infty$, $t_j >t$ and
\[ \lim_{j\to \infty}d(x_j,t_j)>d(x,t). \]
We now argue in a similar way to the proof of (2),
so that we obtain \eqref{eq:cone_D} for any $z \in N_t(x)$.
Therefore
\[ \bigcup_{j=1}^{\infty} \overline{\mathcal{C}_{(z,t_j)}^{\alpha}} \subset D, \]
and it is easily seen that there exist $\varepsilon, \delta>0$ such that
\[ \overline{B_{\varepsilon}(z)} \times (t,t+\delta] \subset
\bigcup_{j=1}^{\infty} \overline{\mathcal{C}_{(z,t_j)}^{\alpha}}. \]
We thus conclude that $z$ is an extinction point,
and hence $N_t(x) \setminus E_t =\emptyset$.
\end{proof}
\begin{rem}\label{rem:conection_main_theorem}
\begin{enumerate}[label=(\arabic*) ,ref=\arabic*]
	\item\label{itm:rem_conection_1} Theorem \ref{thm:cont_d} \eqref{itm:continuity_3} implies that if every $x\in \Gamma _t$ with $t\in (0,T)$ is a non extinction point then the distance function $d$ is continuous in $\mathbf{R}^n\times (0,T)$ and hence Theorem \ref{Theor1} \eqref{itm:main2} holds.
	  \item\label{itm:rem_connection_3} If $d$ is discontinuous at times $0<t_1<t_2<...<t_m<T$ (at one or more points in $\mathbf{R}^n$), we can apply Theorem \ref{Theor1} in the intervals $(0,t_1),(t_1,t_2),...,(t_m,T)$. More precisely, under the assumptions of Theorem \ref{Theor1} we can show
	  \[
	  u^\theta \displaystyle\mathrel{\mathop{\xrightarrow{\mathmakebox[1em]{}}}^{}_{\mathrm{\theta\rightarrow+\infty}}} d\quad \text{ locally uniformly in } \mathbf{R}^n\times \left( (0,t_1)\cup (t_1,t_2)\cup ... \cup (t_m,T)\right) .
	  \]
	  Example \ref{exmp:discontdense} in the next subsection shows that we can construct an evolution $\{\Gamma _t\} _{t\in [0,T)}$ for which the associated distance function has discontinuities for each $t\in \mathbf{Q}$. Therefore, the idea described above cannot be applied.
 \end{enumerate}

\end{rem}
The next proposition gives a sufficient condition for the non-extinction condition.
\begin{pro}\label{prop:suf_nonE}
Let $t \in (0,T)$.
If $x \in \overline{\mathrm{int}(\Gamma_t)}$, then $x \not \in E_t$.
\end{pro}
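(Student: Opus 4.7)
My plan is to use the equivalent characterization of non-extinction points recorded just after Definition \ref{defn:Expt}: a point $x \in \Gamma_t$ is \emph{not} an extinction point as soon as one can exhibit a sequence $\{(x_j,t_j)\}_{j=1}^{\infty}$ with $x_j \in \Gamma_{t_j}$, $t_j > t$ and $(x_j,t_j) \to (x,t)$. Producing such a sequence from the assumption $x \in \overline{\mathrm{int}(\Gamma_t)}$ is exactly the role of the finite propagation property, specifically Proposition \ref{prop:FP_ball}~\eqref{itm:FP_2}.

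First, since $x \in \overline{\mathrm{int}(\Gamma_t)}$, I will pick points $y_j \in \mathrm{int}(\Gamma_t)$ with $y_j \to x$ as $j \to \infty$. For each $j$ the openness of $\mathrm{int}(\Gamma_t)$ in the spatial variable yields a radius $\rho_j > 0$ with $\overline{B_{\rho_j}(y_j)} \subset \Gamma_t$; replacing $\rho_j$ by $r_j := \min\{ \rho_j, 1/j \}$, I may in addition assume $r_j \to 0$ while retaining $\overline{B_{r_j}(y_j)} \subset \Gamma_t$. Proposition \ref{prop:FP_ball}~\eqref{itm:FP_2} then produces the inclusion $\overline{\mathcal{C}_{(y_j,t)}^{r_j}} \subset \Gamma$.

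Next, I will evaluate the cone at the intermediate time $t_j := t + r_j/(2 L_2)$ and at the center, setting $x_j := y_j$. By the definition of $\mathcal{C}_{(y_j,t)}^{r_j}$, the slice $B_{r_j/2}(y_j) \times \{t_j\}$ is contained in $\mathcal{C}_{(y_j,t)}^{r_j}$, so $(x_j,t_j) = (y_j,t_j) \in \Gamma$, i.e.\ $x_j \in \Gamma_{t_j}$. Moreover $t_j > t$, while $r_j \to 0$ forces $t_j \to t$, and $x_j = y_j \to x$. This is precisely the sequence needed to conclude that $x \notin E_t$.

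I do not foresee a substantial obstacle: the only ingredients are the spatial openness supplied by the hypothesis $x \in \overline{\mathrm{int}(\Gamma_t)}$ and the spatial-to-temporal propagation encoded in Proposition \ref{prop:FP_ball}~\eqref{itm:FP_2}. The only bookkeeping to watch is that the radius $r_j$ and the temporal opening $r_j/L_2$ of the cone both shrink to $0$, so that the constructed points $(x_j,t_j)$ genuinely converge to $(x,t)$ from above in time.
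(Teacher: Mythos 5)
Your proof follows the paper's argument essentially verbatim: approximate $x$ by interior points of $\Gamma_t$, enclose small closed balls in $\Gamma_t$, propagate them forward via Proposition \ref{prop:FP_ball}~\eqref{itm:FP_2}, and read off points $x_j \in \Gamma_{t_j}$ with $t_j>t$ converging to $(x,t)$, which by the remark after Definition \ref{defn:Expt} gives $x \notin E_t$. The one place your write-up is slightly more careful than the paper's is the explicit shrinking $r_j := \min\{\rho_j, 1/j\}$, which guarantees that both the radii and the temporal openings $r_j/L_2$ tend to zero; the paper instead sets $\varepsilon_j := \mathrm{dist}(x_j,\partial\Gamma_t)$ and asserts $\varepsilon_j \to 0$, which is automatic when $x \in \partial\Gamma_t$ but not when $x$ already lies in $\mathrm{int}(\Gamma_t)$, so your choice handles both cases uniformly.
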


\begin{proof}
Let $x \in \overline{\mathrm{int}(\Gamma_t)}$.
Then there exists a sequence
$\{ x_j \}_{j=1}^{\infty} \subset \mathrm{int}(\Gamma_t)$
that converges to $x$ as $j \to \infty$.
Set $\varepsilon_j:=\mathrm{dist}(x_j, \partial \Gamma_t)$,
which converges to $0$ as $j \to 0$.
Since we have $\overline{B_{\varepsilon_j}(x_j)} \subset \Gamma_t$,
Proposition \ref{prop:FP_ball} (2) implies that
$\overline{\mathcal{C}_{(x_j,t)}^{\varepsilon_j}} \subset \Gamma$.
In particular $x_j \in \Gamma _{t+(\varepsilon_j/L_2)}$,
which is the vertex of the cone,
and consequently we see that $x$ is a non-extinction point.
\end{proof}
\begin{rem}
The converse of the assertion of Proposition \ref{prop:suf_nonE} is not true in general.
In fact, it is easy to construct the interface such that 
$\Gamma_t=\{ 0 \}$ for all $t \in (0,T)$.
Any $x \in \Gamma_t$ is a non-extinction point,
but $\mathrm{int}(\Gamma_t)=\emptyset$.
\end{rem}
\begin{rem}
The opposite notion of an extinction point 
is an emerging point, which is defined as follows:
Let $x \in \Gamma _t$.
We say that $x$ is an {\em emerging point} 
if there exist $\varepsilon,\delta >0$ such that 
$\overline{B_{\varepsilon}(x)} \times [t-\delta,t) \subset D$.
However, the property of finite propagation implies that
there are no emerging points.
Suppose that $x \in \Gamma_t$ is an emerging point, i.e.,
$\overline{B_{\varepsilon}(x)} \times [t-\delta,t) \subset D$
for some $\varepsilon,\delta >0$.
Choose $M \Geqq 1$ large so that 
$x \in \overline{\mathcal{C}_{(x,t-(\delta/M))}^{\varepsilon}}$.
This cone is a subset of $D$ by Proposition \ref{prop:FP_ball} \eqref{itm:FP_1}.
Thus $x \in D$, a contradiction.
\end{rem}
\subsection{An Example}

In this subsection we present an example which shows that the idea presented in Remark \ref{rem:conection_main_theorem} \eqref{itm:rem_connection_3} can not be applied, even if we restrict the evolutions to move inside a bounded domain instead of $\mathbf{R}^n$.
\begin{ex}[A zero level set vanishing for all $t\in \mathbf{Q}\cap (0,T)$]\label{exmp:discontdense}
Let $\mathbf{Q}\cap (0,T)=\lbrace t_1,t_2,...\rbrace$.\\
\textbf{Case 1. In $\mathbf{R}^n$.}\par 
Consider disjoint cubes with sides of length at least $2t_n$ for $n=1,2,...$. Then inside every each one of them, we fit a circle $B_{t_n}$ of radius $t_n$. The evolution of these circles under the equation
\begin{equation}\label{eq23.55*}
V=-1
\end{equation}
where $V$ is the normal velocity (with normal pointing to the exterior of the circles),
is given by
\[
\frac{d}{dt}R(t)=-1.
\]
Here $R(t)$ is the radius of the circles. Notice that the evolution of \eqref{eq23.55*} is the same as the zero level set of the solution $u$ of the problem
 \begin{equation*}
  \left\{
\begin{aligned}
& u_t=-|\nabla u| & \text{ in }& \mathbf{R}^n\times (0,T) , \\
& u(x,0)=u_0(x)  & \text{ in }&\mathbf{R}^n 
\end{aligned}
\right.
    \end{equation*}
if $u_0$ is for example the signed distance function to the circles $B_n$, with positive values in the interior of the circles. For a proof of equivalence of the two evolutions see for example \cite[Section 4.2.3 and 4.2.4]{Giga.See}.
 Then $R(t)=t_n-t$ and the extinction time of the circles is $t=t_n$.\\
\textbf{Case 2. In a bounded domain.}\par 
Let $\Omega$ be a bounded open set. For every $n\in \mathbf{N}$ we  can find points $x_n\in \Omega $ and positive numbers $\varepsilon _n$ such that $B_{\varepsilon _n} (x_n)\subset \Omega$ with $\overline{B_{\varepsilon _n} (x_n)} \cap \overline{B_{\varepsilon _m}(x_m)}=\emptyset \text{ for } n\neq m$.
 Then for $a_n=\varepsilon _n/2(T+1/2)$ we have $R_n:=t_na_n<\varepsilon _n/2$ and $B_{R_n}(x_n)\subset B_{\frac{\varepsilon _n}{2}}(x_n)$. We then define
 \[ c_n (x)=\begin{cases} 
a_n & \mbox{ in } \ B_{\varepsilon _n/2}(x_n), \\
2a_n-\frac{2a_n}{\varepsilon _n}|x-x_n| 
& \mbox{ in } \ B_{\varepsilon _n}(x_n) \setminus B_{\varepsilon _n/2}(x
_n),\\
0 & \mbox{ else }
\end{cases} \]
 and the velocity
\begin{equation*}
  c(x)=\sup_n c_n(x),\quad \text{ for } x\in \mathbf{R}^n.
    \end{equation*}
    As in Case 1 we consider the problem
     \[ 
\begin{cases} 
 u_t=-c(x)|\nabla u|  & \mbox{ in } \ \mathbf{R}^n\times (0,T), \\
u(x,0)=u_0(x)& \mbox{ in } \ \mathbf{R}^n.
\end{cases} 
\]
Here $u_0$ is the signed distance function from the set $\bigcup_{n\in \mathbf{N}}\partial B_{R_n}(x_n)$ with positive values in each $B_{R_n}(x_n)$.
The extinction time of $\partial B_{R_n}(x_n)$ is as in the first case $t=t_n$.\\

\end{ex}

 \section{Homogenization}\label{section:homogen}

We conclude this paper by proving Theorem \ref{Theor2}.
Let us consider 
\begin{equation}\label{eq:resc_init_eq}
u_t=H_1 \left( x,\frac{t}{1+\theta},\nabla u \right)
\end{equation}
and 
\begin{equation}\label{eq:resc_init_eq2}
u_t=H_2 (u,\nabla u),
\end{equation}
where $\theta=k_2/k_1$ is as in \eqref{eq:combined_hamilt}.
By the assumptions on $H_1$ and $H_2$, 
the classical comparison and existence results still hold for the problems \eqref{eq:resc_init_eq}, \eqref{eq:initin} and \eqref{eq:resc_init_eq2}, \eqref{eq:initin}.

To solve the problem \eqref{eq:HJe}, \eqref{eq:initin}
we use the notion of the iterative solution
which was introduced in Remark \ref{Rem:iterative_sol}.
By the comparison and existence results for \eqref{eq:resc_init_eq} and \eqref{eq:resc_init_eq2},
we see that \eqref{eq:HJe}, \eqref{eq:initin} admits a unique continuous iterative solution.

\subsection{Hamiltonians discontinuous in time}\label{subsect:Discontin_Hamilt}

Since the Hamiltonian $H_{12}$ is now discontinuous with respect to time,
we have to be careful about the proof of our homogenization result.
We do not use the notion of viscosity solutions introduced in Definition \ref{defi1},
where the upper- and lower semicontinuous envelopes are used for the equation, because otherwise
we could not estimate 
the difference between $(H_{12})^*$ and $(H_{12})_*$.
Thus we first discuss removability of the upper- and lower star of the equation
as well as a connection between the iterative solution 
and the different notions of viscosity solutions of \eqref{eq:HJe}.

In this section we call $u$ 
a {\em viscosity subsolution} (resp. {\em supersolution})
{\em with star}
if it is a viscosity subsolution (resp. supersolution)
in the sense of Definition \ref{defi1}.
Also, we say that $u$ is 
a {\em viscosity subsolution} (resp. {\em supersolution})
{\em without star}
if it satisfies the viscosity inequality \eqref{eq5.5}
with $F$ instead of $F^*$ (resp. $F_*$).
Note that, since $F_* \leq F \leq F^*$,
a viscosity subsolution (resp. supersolution) without star is always 
a viscosity subsolution (resp. supersolution) with star.
Namely, a notion of viscosity solutions without star is stronger than that with star.

\begin{theo}\label{thm:withoutstar}
Let $u^{\varepsilon}$ be the iterative solution of \eqref{eq:HJe}, \eqref{eq:initin}.
\begin{enumerate}[label=(\arabic*), ref=\arabic*]
	\item\label{itm:homog_1} $u^{\varepsilon}$ is a viscosity solution of \eqref{eq:HJe}, \eqref{eq:initin} without star.
	\item\label{itm:homog_2} If $v$ is a viscosity solution of \eqref{eq:HJe}, \eqref{eq:initin} with star,
then $v=u^\varepsilon$ in $\mathbf{R}^n \times (0,T)$.
\end{enumerate}
\end{theo}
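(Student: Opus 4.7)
For part (1), the iterative solution $u^\varepsilon$ is continuous on $\mathbf{R}^n \times [0,T)$ by construction, and the plan is to check the without-star viscosity inequality at each $(x_0, t_0)$ individually. Let $0 = s_0 < s_1 < s_2 < \ldots$ denote the transition times $\varepsilon(i-1)+k_1\Delta t$ and $\varepsilon i$. On each open subinterval $(s_{j-1}, s_j)$, $H_{12}$ is continuous in all its arguments (it reduces either to $H_1(x, t/(1+\theta), p)$ or to $H_2(r, p)$), and $u^\varepsilon$ is by construction a standard viscosity solution of the corresponding continuous equation there. At a transition time $t_0 = s_j$, the $\tau \leq$ convention in the cases of \eqref{eq:combined_hamilt} makes $H_{12}(\cdot, t_0, t_0/\varepsilon, \cdot, \cdot)$ equal to the Hamiltonian of the \emph{preceding} subinterval. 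I will then invoke the standard extension of the viscosity property to the terminal time of a subinterval, using the perturbation $\phi + \eta/(s_j - t)$ with $\eta \to 0^+$, cf.\ \cite{Chen_Giga_Goto}, to promote the inequalities from $(s_{j-1}, s_j)$ up to $t = s_j$. The resulting inequality at $(x_0, s_j)$ is precisely the without-star inequality associated with $H_{12}(\cdot, s_j, s_j/\varepsilon, \cdot, \cdot)$.

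For part (2), the plan is an induction on subintervals. The inductive claim is $v = u^\varepsilon$ on $[s_{j-1}, s_j]$; the base case $j=1$ follows from $v(\cdot, 0) = u_0 = u^\varepsilon(\cdot, 0)$. Assume $v(\cdot, s_{j-1}) = u^\varepsilon(\cdot, s_{j-1})$. Since $H_{12}$ is continuous on $(s_{j-1}, s_j)$, we have $(H_{12})^* = (H_{12})_* = H_{12}$ there, so $v^*$ and $v_*$ are respectively a standard subsolution and supersolution of the continuous equation on this open interval. The same terminal-time extension promotes them to sub- and supersolutions on the closed subinterval $[s_{j-1}, s_j]$ of the same continuous Hamiltonian. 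With the matching initial data at $t = s_{j-1}$ supplied by the inductive hypothesis, the comparison principle (Proposition \ref{prop3}) yields $v^* \leq v_*$ on $[s_{j-1}, s_j]$; combined with $v_* \leq v \leq v^*$ this forces $v = v^* = v_*$, so $v$ is continuous on $[s_{j-1}, s_j]$ and is a standard viscosity solution of the continuous equation with initial data $u^\varepsilon(\cdot, s_{j-1})$. Uniqueness (Theorem \ref{the2}) then identifies $v$ with $u^\varepsilon$ on this subinterval, closing the induction.

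The main technical point common to both parts is the extension of the viscosity property up to the terminal time $s_j$ where $H_{12}$ has a jump in $t$. I expect the main obstacle to be verifying that the Chen-Giga-Goto-type perturbation argument uses only information from the open subinterval $(s_{j-1}, s_j)$ and the upper (respectively lower) semicontinuity of $v^*$ (respectively $v_*$), with no hypothesis on $H_{12}$ or on solutions for $t > s_j$. Once this verification is in hand, the jump of $H_{12}$ at transition times is absorbed by the left-continuous convention in \eqref{eq:combined_hamilt}, and both statements reduce to iterated applications of the classical viscosity theory on each continuous subinterval.
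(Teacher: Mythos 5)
Your part (1) is essentially the paper's argument: iterate the Chen--Giga--Goto extension of the viscosity property up to the terminal time of each subinterval, and use the left-closed convention in \eqref{eq:combined_hamilt} so that the inequality obtained at a transition time is the without-star inequality there. That part is fine.

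In part (2) there is a genuine gap at the transition times, and it is not the one you flagged. You invoke the comparison principle on $[s_{j-1},s_j]$ for the pair $(v^*,v_*)$ and claim the required initial ordering at $t=s_{j-1}$ is ``supplied by the inductive hypothesis'' $v(\cdot,s_{j-1})=u^\varepsilon(\cdot,s_{j-1})$. But Proposition \ref{prop3} needs $v^*(\cdot,s_{j-1})\le v_*(\cdot,s_{j-1})$, and the envelopes $v^*,v_*$ at $(x,s_{j-1})$ are computed as $\limsup$/$\liminf$ over a full space--time neighborhood, including times $s>s_{j-1}$ about which the induction says nothing yet. Knowing $v=u^\varepsilon$ on $[0,s_{j-1}]$ does not rule out $v^*(x,s_{j-1})>v(x,s_{j-1})$ via a sequence approaching from the future, so the initial condition for the comparison on the next subinterval is not verified and the induction does not close. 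The paper resolves exactly this point by a different device: it compares $v^*$ with $u^\varepsilon$ (and separately $u^\varepsilon$ with $v_*$) on each open subinterval, and then passes the inequality $v^*\le u^\varepsilon$ to the transition time using the \emph{left accessibility} of $v^*$ (\cite[Sections 2, 9]{Chen_Giga_Goto}): $v^*(x,s_j)$ is attained as a limit of $v^*(x_i,t_i)$ with $t_i<s_j$, so the strict-past inequality and the continuity of $u^\varepsilon$ give $v^*(x,s_j)\le u^\varepsilon(x,s_j)$, which then serves as the initial ordering for the next subinterval. Your plan can be repaired by inserting this left-accessibility step (for both $v^*$ and $v_*$), but without it the ``restart'' at each $s_{j-1}$ is unjustified. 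Note also that the technical point you single out --- extending the viscosity property up to the terminal time $s_j$ --- concerns the right endpoint and is needed for part (1); the crux of part (2) is the left endpoint of the next subinterval, which is a different issue.
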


Theorem \ref{thm:withoutstar} \eqref{itm:homog_1} asserts that
$u^{\varepsilon}$ is a viscosity solution in $\mathbf{R}^n \times (0,T)$
not only in the sense with star but also in the sense without star. 
In other words, existence of solutions is established in both the cases.
On the other hand, \eqref{itm:homog_2} is concerned with uniqueness of solutions
since it asserts that any solution should be equal to $u^{\varepsilon}$.
In the sense with star, Perron's method (see Theorem \ref{the2}) gives 
a viscosity solution $u_P$ of \eqref{eq:HJe}, \eqref{eq:initin} which is not necessarily continuous.
By \eqref{itm:homog_2} we see that $u_P=u^{\varepsilon}$, and therefore
$u_P$ is also a viscosity solution of \eqref{eq:HJe}, \eqref{eq:initin} without star
and an iterative solution as well.

\begin{proof}
\eqref{itm:homog_1}
We apply the fact that the viscosity property is extended 
up to the terminal time (\cite[Section 7]{Chen_Giga_Goto}).
Since $u^{\varepsilon}$ is a viscosity solution of \eqref{eq:resc_init_eq}
in $\mathbf{R}^n \times (0,k_1 \Delta t)$,
we see that $u^{\varepsilon}|_{\mathbf{R}^n \times (0,k_1 \Delta t]}$ 
is a viscosity subsolution of \eqref{eq:resc_init_eq}
in $\mathbf{R}^n \times (0,k_1 \Delta t]$.
This implies that $u^{\varepsilon}$ is a viscosity subsolution of \eqref{eq:HJe}
without star on $\mathbf{R}^n \times \{k_1 \Delta t\}$.
Arguing in the same way on $\mathbf{R}^n \times \{ t \}$
with $t=\varepsilon, \varepsilon+k_1\Delta t, 2\varepsilon, \dots$, 
we conclude that $u^{\varepsilon}$ is a viscosity subsolution 
of \eqref{eq:HJe} without star.
The proof for supersolution is similar.

\eqref{itm:homog_2}
Since $v$ and $u^{\varepsilon}$ are, respectively, 
a viscosity subsolution and a supersolution of \eqref{eq:resc_init_eq}
in $\mathbf{R}^n \times (0,k_1 \Delta t)$,
the comparison principle for \eqref{eq:resc_init_eq} implies that
$v^* \leq u^{\varepsilon}$ in $\mathbf{R}^n \times (0,k_1 \Delta t)$.
If we prove $v^* \leq u^{\varepsilon}$ on $\mathbf{R}^n \times \{ k_1 \Delta t\}$,
we then have $v^* \leq u^{\varepsilon}$ in $\mathbf{R}^n \times (k_1 \Delta t,\varepsilon)$
by the comparison principle for \eqref{eq:resc_init_eq2}.
Iterating this argument, we finally obtain 
$v^* \leq u^{\varepsilon}$ in $\mathbf{R}^n \times (0,T)$.
In the same manner, we derive $u^{\varepsilon} \leq v_*$ in $\mathbf{R}^n \times (0,T)$,
and hence $u^{\varepsilon} =v$ in $\mathbf{R}^n \times (0,T)$.

It remains to prove that $v^*(x,k_1 \Delta t) \leq u^{\varepsilon}(x,k_1 \Delta t)$ for $x \in \mathbf{R}^n$.
We now use the fact that $v^*$ is left accessible
(\cite[Section 2, 9]{Chen_Giga_Goto}), i.e.,
there exists a sequence $\{ (x_j,t_j) \}_{j=1}^{\infty}$ such that
$t_j < k_1 \Delta t$ for all $j \geq 1$, 
$(x_j,t_j) \to (x,k_1 \Delta t)$ and 
$v^*(x_j,t_j) \to v^*(x,k_1 \Delta t)$ as $j\to \infty$.
Therefore, taking the limit in $v^*(x_j,t_j) \leq u^{\varepsilon}(x_j,t_j)$
gives $v^*(x,k_1 \Delta t) \leq u^{\varepsilon}(x,k_1 \Delta t)$.
\end{proof}

\begin{rem}\label{rem:comparison_iterartively}
The same argument in the proof of \eqref{itm:homog_2} yields
the comparison principle for \eqref{eq:HJe}.
Namely, if $u$ and $v$ are, respectively, 
a subsolution and a supersolution of \eqref{eq:HJe} with star
such that $u^*(\cdot ,0)\leq v_* (\cdot ,0)$ in $\mathbf{R}^n$, 
then $u^*\leq v_*$ in $\mathbf{R}^n\times[0,T)$.
Also, similar arguments allow us to prove a local version of the comparison principle.
Let $(x,t)\in \mathbf{R}^n \times (0,T)$ and $r>0$.
If $u$ and $v$ are a subsolution and a supersolution of \eqref{eq:HJe}
with star in $B_r(x)\times (t-r,t+r)=:C$, respectively, 
with $u^* \leq v_*$ on $\partial_P C$, 
then $u^* \leq v_*$ in $C$.
Here by $\partial _P$ we denote the parabolic boundary,
that is, for $\Omega \subset \mathbf{R}^n$ and $a<b$, 
\[ \partial _P (\Omega \times (a,b))
:=(\partial \Omega \times [a,b)) \cup (\Omega \times \{ a \} ). \]
\end{rem}

\begin{rem}
See \cite{MR845397, MR2399437}
for more results concerning Hamiltonians discontinuous in time.
\end{rem}

\subsection{Cell problems}
We study an one-dimensional cell problem with discontinuity,
whose solution and eigenvalue will be needed in the proof of our homogenization result.
Consider
\begin{equation}
v'(\tau)+\lambda=H(\tau ) \quad \mbox{in} \ \mathbf{T},
\label{eq:CellPB}
\end{equation}
where $\mathbf{T}=\mathbf{R}/\mathbf{Z}$ is the one-dimensional torus,
$H \in L^1(\mathbf{T})$ and $\lambda \in \mathbf{R}$.
Although we only need to study piecewise continuous $H$
for our homogenization result,
we here take it as a $L^1$-function since the technical aspects of the proof allow us to generalize $H$ without any additional effort.
For the special case where $H$ is piecewise continuous, 
see Remark \ref{rem:piewh}.
We define
\[ H^{\#}(\tau):= \limsup_{k \downarrow 0} 
\left( \frac{1}{k} \int_{\tau-k}^{\tau} H(s)d s \right), \quad 
H_{\#}(\tau):= \liminf_{k \downarrow 0} 
\left( \frac{1}{k} \int_{\tau-k}^{\tau} H(s)d s \right). \]
\begin{lem}[Solvability of the cell problem]\label{lem:Cellpb}
We set 
\begin{equation} 
\lambda := \int_0^1 H(s) ds, \quad
v(\tau):=v(0)-\lambda \tau + \int_0^{\tau} H(s)ds. 
\label{eq:lam_v}
\end{equation}
Then $v$ is a viscosity solution of \eqref{eq:CellPB} in the following sense:
If $\max_{\mathbf{T}}(v-\phi)=(v-\phi)(\tau_0)$
(resp. $\min_{\mathbf{T}}(v-\phi)=(v-\phi)(\tau_0)$)
for $\tau_0 \in \mathbf{T}$ and $\phi \in C^1 (\mathbf{T})$,
then
\begin{equation}
\phi'(\tau_0)+ \lambda \leq H_{\#}(\tau_0) 
\quad \mbox{(resp. $\phi'(\tau_0)+ \lambda \geq H^{\#}(\tau_0)$)}.
\label{eq:visineq_sharp}
\end{equation}
\end{lem}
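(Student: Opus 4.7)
The plan is to exploit the explicit formula for $v$ to convert one-sided differences $v(\tau)-v(\tau_0)$ into integrals of $H$, and then pass to a left-sided limit $\tau \to \tau_0^-$, which is precisely the averaging used in the definitions of $H^{\#}$ and $H_{\#}$. Before doing this, I would first verify that $v$ is well-defined on $\mathbf{T}$: the choice $\lambda = \int_0^1 H(s)\,ds$ forces $v(1)=v(0)$, and since $H\in L^1(\mathbf{T})$ the function $v$ is absolutely continuous, so in particular continuous on $\mathbf{T}$, and the max/min in the viscosity definition are attained for any $\phi \in C^1(\mathbf{T})$.

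For the subsolution inequality, suppose $\max_{\mathbf{T}}(v-\phi)=(v-\phi)(\tau_0)$, which rewrites as $v(\tau)-v(\tau_0)\le \phi(\tau)-\phi(\tau_0)$ for all $\tau$. Using the formula for $v$, for $\tau=\tau_0-k$ with small $k>0$,
\[
v(\tau_0-k)-v(\tau_0)=\lambda k-\int_{\tau_0-k}^{\tau_0}H(s)\,ds,
\]
so the maximum condition yields
\[
\lambda+\frac{\phi(\tau_0)-\phi(\tau_0-k)}{k}\le \frac{1}{k}\int_{\tau_0-k}^{\tau_0}H(s)\,ds.
\]
Taking $\liminf_{k\downarrow 0}$ on both sides, the left-hand side converges to $\phi'(\tau_0)+\lambda$ by differentiability of $\phi$, while the right-hand side has liminf equal to $H_{\#}(\tau_0)$ by definition. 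This gives $\phi'(\tau_0)+\lambda\le H_{\#}(\tau_0)$.

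The supersolution inequality follows by the same calculation applied to $\min_{\mathbf{T}}(v-\phi)=(v-\phi)(\tau_0)$: the reversed inequality $v(\tau)-v(\tau_0)\ge \phi(\tau)-\phi(\tau_0)$ leads, after the same algebraic manipulation with $\tau=\tau_0-k$, to
\[
\lambda+\frac{\phi(\tau_0)-\phi(\tau_0-k)}{k}\ge \frac{1}{k}\int_{\tau_0-k}^{\tau_0}H(s)\,ds,
\]
whose $\limsup_{k\downarrow 0}$ yields $\phi'(\tau_0)+\lambda\ge H^{\#}(\tau_0)$. There is essentially no serious obstacle; the only point that needs care is the choice of the one-sided approach $\tau \to \tau_0^-$, dictated by the fact that $H^{\#}$ and $H_{\#}$ are defined through left averages $\frac{1}{k}\int_{\tau-k}^{\tau}H$. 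A right-sided approach would produce a different (right) essential upper/lower limit of $H$, which is not what appears in the statement, so the direction of the test point matters.
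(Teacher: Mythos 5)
Your argument is correct and is essentially the paper's own proof: the same left-sided difference quotient $\bigl(\phi(\tau_0)-\phi(\tau_0-k)\bigr)/k \le -\lambda + \frac{1}{k}\int_{\tau_0-k}^{\tau_0}H(s)\,ds$ followed by $\liminf_{k\downarrow 0}$ (resp.\ $\limsup$), preceded by the check that the choice of $\lambda$ makes $v$ periodic. Your added remark that the left-sided approach is forced by the definitions of $H^{\#}$ and $H_{\#}$ is a fair observation but does not change the substance.
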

\begin{proof}
1. 
We first note that $v$ is a periodic function thanks to the choice of $\lambda$.
Indeed, for all $\tau \in \mathbf{R}$ and $m \in \mathbf{Z}$, 
we observe
\begin{align*}
v(\tau +m)
&=v(0)-\lambda (\tau +m) + \int_0^{\tau +m} H(s)ds \\
&=v(0)-\lambda (\tau +m) + \left( \lambda m +\int_0^{\tau} H(s)ds \right) \\
&=v(0)-\lambda \tau + \int_0^{\tau} H(s)ds \\
&=v(\tau).
\end{align*}
Thus $v$ is periodic.

2.
Take $\tau_0 \in \mathbf{T}$ and $\phi \in C^1 (\mathbf{T})$ such that
$\max_{\mathbf{T}}(v-\phi)=(v-\phi)(\tau_0)$.
For $k>0$ we have 
\[ \frac{\phi(\tau_0)-\phi(\tau_0-k)}{k}
\leq \frac{v(\tau_0)-v(\tau_0-k)}{k}
=-\lambda+\frac{1}{k} \int_{\tau_0-k}^{\tau_0} H(s)d s. \]
Taking $\liminf_{k \downarrow 0}$ implies
the first inequality in \eqref{eq:visineq_sharp}.
A similar argument shows that $v$ is a supersolution.
\end{proof}
\begin{rem}\label{rem:piewh}
Let $0=\tau_0<\tau_1< \dots <\tau_N=1$ be a partition of $[0,1]$
and assume that $H \in L^1(\mathbf{T})$ is continuous on each $(\tau_i,\tau_{i+1}]$.
Then we have $H^{\#}=H_{\#}=H$, and consequently 
the viscosity inequalities in \eqref{eq:visineq_sharp} become
\[ \phi'(\tau_0)+ \lambda \leq H(\tau_0) 
\quad \mbox{(resp. $\phi'(\tau_0)+ \lambda \geq H(\tau_0)$)}. \]
In other words, $v$ given by \eqref{eq:lam_v}
is a viscosity solution of \eqref{eq:CellPB} without star.
\end{rem}
\subsection{Proof of homogenization}

\begin{proof}[Proof of Theorem \ref{Theor2}]
1.
Let $u^\varepsilon$ be the iterative solution of \eqref{eq:HJe} and \eqref{eq:initin}.
We denote by $\overline{u}$ and $\underline{u}$
the upper- and lower half-relaxed limit of $u^{\varepsilon}$ respectively, 
i.e., $\overline{u}=\limsup_{\varepsilon \to 0}^* u^\varepsilon$ 
and $\underline{u}=\liminf_{* \varepsilon \to 0}u^\varepsilon$.
Since
the functions $u_0(x)-Kt$ and $u_0(x)+Kt$ with $K>0$ large are, 
respectively, a subsolution and a supersolution of \eqref{eq:HJe},
it follows from comparison that $u_0(x)-Kt \le u^{\varepsilon} (x,t)\leq u_0(x)+Kt$.
This implies $-\infty <\underline{u}\leq \overline{u}<+\infty$
and $\overline{u}(x,0)=\underline{u}(x,0)=u_0(x)$.

2.
Let us show that $\overline{u}$ is a subsolution of \eqref{eq:mean_hmilt_eq}.
Let $\phi$ be a test function for $\overline{u}$ at $(\hat{x},\hat{t})$ from above, i.e,
\begin{align}
&\overline{u} <\phi \quad \text{in} \ 
(B_R(\hat{x}) \times (\hat{t}-R,\hat{t}+R)) \setminus \{ (\hat{x},\hat{t}) \},
\label{eq:test_function1} \\
&\overline{u}(\hat{x},\hat{t})=\phi (\hat{x},\hat{t})
\label{eq:test_function2}
\end{align}
for some $R>0$ such that $0<\hat{t}-R<\hat{t}+R<T$.
We set 
\[ H(\tau):=H_{12}(\hat{x}, \hat{t}, \tau, \hat{\phi}, \nabla \hat{\phi}), \] 
where $\hat{\phi}=\phi (\hat{x},\hat{t})$ and 
$\nabla\hat{\phi}=\nabla\phi (\hat{x},\hat{t})$.
Then $H \in L^1(\mathbf{T})$ and $H$ is continuous 
on $(0,k_1 \Delta t/\varepsilon]$ and $(k_1 \Delta t/\varepsilon,1]$.
Let $v$ and $\lambda$ be as in \eqref{eq:lam_v}.
By Remark \ref{rem:piewh} we see that 
$v$ is a viscosity solution of \eqref{eq:CellPB} without star.
Noting that $\theta =k_2 / k_1$, we observe  
\begin{align*}
\int_0^1 H_{12}(x,t,\tau,r,p) \, d\tau
&= \int_0^{\frac{k_1\Delta t}{\varepsilon}}H_1\left( x,\frac{t}{1+\frac{k_2}{k_1}},p\right)\, d\tau +\int_{\frac{k_1\Delta t}{\varepsilon}}^1 H_2(r,p)\, d\tau \\
&= \frac{k_1}{k_1+k_2}H_1\left( x,\frac{t}{1+\frac{k_2}{k_1}},p\right) +\frac{k_2}{k_1+k_2}H_2(r,p)\\
&= \frac{1}{1+\theta}\left( H_1\left( x,\frac{t}{1+\theta},p\right)+\theta H_2(r,p)\right) \\
&=\bar{H}(x,t,r,p).
\end{align*}
This implies that 
\[ \lambda=\int_0^1 H(\tau) \, d \tau
=\int_0^1 H_{12}(\hat{x}, \hat{t}, \tau, \hat{\phi}, \nabla \hat{\phi}) \, d \tau 
=\bar{H}(\hat{x},\hat{t},\hat{\phi},\nabla \hat{\phi}). \] 
Consequently, $v$ solves
\begin{equation} 
v'(\tau)+\bar{H}(\hat{x},\hat{t},\hat{\phi},\nabla \hat{\phi})
=H_{12}(\hat{x}, \hat{t}, \tau, \hat{\phi}, \nabla \hat{\phi})
\quad \mbox{in} \ \mathbf{T}. 
\label{eq:vHH}
\end{equation}

3.
We want to show that 
\[ \hat{\phi}_t \leq \bar{H}(\hat{x},\hat{t}, \hat{\phi}, \nabla \hat{\phi} ) \]
with $\hat{\phi}_t=\phi_t (\hat{x},\hat{t})$.
Suppose in the contrary that there is $\mu >0$ such that
\begin{equation} 
\hat{\phi}_t \geq 
\bar{H}(\hat{x},\hat{t}, \hat{\phi}, \nabla \hat{\phi} )+\mu. 
\label{eq:phihatmu}
\end{equation}
Let us introduce a perturbed test function.
Define
\[ \phi^\varepsilon (x,t)
=\phi (x,t)+\varepsilon v \left( \frac{t}{\varepsilon} \right). \]
Since $v$ is bounded, 
we see that $\phi^{\varepsilon}$ converges to $\phi$ 
uniformly as $\varepsilon \rightarrow 0$. 
We will show that $\phi ^\varepsilon$ is a supersolution of \eqref{eq:HJe}
in $B_{r}(\hat{x})\times (\hat{t}-r,\hat{t}+r)=:C$, 
where $r \in (0,R)$ is chosen to be small so that
\begin{align}
|\phi_t(x,t)-\hat{\phi}_t| &\le \frac{\mu}{4},
\label{eq:smallr1} \\
\left| H_{12} \left( x,t,\frac{t_0}{\varepsilon},
\phi(x,t),\nabla \phi(x,t) \right)
-H_{12} \left( \hat{x},\hat{t},\frac{t_0}{\varepsilon},
\hat{\phi},\nabla \hat{\phi} \right) \right| &\le \frac{\mu}{8}
\label{eq:smallr2}
\end{align}
for all $(x,t) \in C$.
Although $H_{12}=H_{12}(x,t,\tau,r,p)$ is discontinuous in $\tau$,
\eqref{eq:smallr2} is achieved because we fix $\tau=t_0/\varepsilon$.
More precisely, \eqref{eq:smallr2} is satisfied if 
\begin{align*} 
\left| H_1 \left( x,t,\nabla \phi(x,t) \right)
-H_1 (\hat{x},\hat{t},\nabla \hat{\phi}) \right| 
&\le \frac{\mu}{8}, \\
\left| H_2 \left( \phi(x,t),\nabla \phi(x,t) \right)
-H_2 (\hat{\phi},\nabla \hat{\phi}) \right| 
&\le \frac{\mu}{8} 
\end{align*}
for all $(x,t) \in C$.
Allowing a larger error, 
we are able to replace 
$\phi(x,t)$ on the left-hand side of \eqref{eq:smallr2}
by $\phi^{\varepsilon}(x,t)$ with $\varepsilon >0$ small enough.
Namely, we have 
\begin{equation}
\left| H_{12} \left( x,t,\frac{t_0}{\varepsilon},
\phi^{\varepsilon}(x,t),\nabla \phi(x,t) \right)
-H_{12} \left( \hat{x},\hat{t},\frac{t_0}{\varepsilon},
\hat{\phi},\nabla \hat{\phi} \right) \right| \le \frac{\mu}{4}.
\label{eq:smallr3}
\end{equation}

4.
Let $\psi$ be a test function for $\phi ^\varepsilon$ 
at $(x_0,t_0) \in C$ from below.
Then the function
\[ \tau \mapsto v(\tau)-\frac{1}{\varepsilon}
\left( \psi (x_0,\varepsilon \tau )-\phi (x_0,\varepsilon \tau )\right) \]
has a local minimum at $\tau _0 :=t_0/\varepsilon$. 
Also, from the smoothness of $\phi^{\varepsilon}(\cdot, t_0)$,
it follows that  
\begin{equation}
\nabla \phi^{\varepsilon}(x_0,t_0)
=\nabla \phi(x_0,t_0)=\nabla \psi (x_0,t_0). 
\label{eq:3nabla}
\end{equation}
Since $v$ is a viscosity supersolution of \eqref{eq:vHH}, 
we have
\[ \psi_t (x_0,t_0)-\phi_t (x_0,t_0)
+\bar{H}(\hat{x},\hat{t},\hat{\phi},\nabla \hat{\phi})
\ge H_{12} \left( \hat{x},\hat{t},\frac{t_0}{\varepsilon},\hat{\phi},\nabla \hat{\phi} \right). \]
Let $\phi^{\varepsilon}_0=\phi^{\varepsilon}(x_0,t_0)$,
$\nabla \phi_0=\nabla \phi(x_0,t_0)$ and $\nabla \psi_0=\nabla \psi (x_0,t_0)$.
Applying \eqref{eq:smallr1}, \eqref{eq:smallr3} and \eqref{eq:phihatmu}
to the above inequality,
we compute
\begin{align} 
\psi_t (x_0,t_0) 
&\ge \phi_t (x_0,t_0) 
-\bar{H}(\hat{x},\hat{t},\hat{\phi},\nabla \hat{\phi})
+H_{12} \left( \hat{x},\hat{t},\frac{t_0}{\varepsilon},\hat{\phi},\nabla \hat{\phi} \right) \notag \\
&\ge \hat{\phi}_t -\frac{\mu}{4}
-\bar{H}(\hat{x},\hat{t},\hat{\phi},\nabla \hat{\phi})
+H_{12} \left( x_0,t_0,\frac{t_0}{\varepsilon},\phi^{\varepsilon}_0, \nabla \phi_0 \right)
-\frac{\mu}{4} \notag \\
&\ge H_{12} \left( x_0,t_0,\frac{t_0}{\varepsilon},\phi^{\varepsilon}_0, \nabla \psi_0 \right)
+\frac{\mu}{2}. \label{eq:visc_estim}
\end{align}
For the last inequality we have used \eqref{eq:3nabla}.
The above inequality shows that $\phi^{\varepsilon}$ is a supersolution.
Moreover, since $H_{12}=H_{12}(x,t,\tau,r,p)$ is continuous in the $r$-variable, 
the estimate \eqref{eq:visc_estim} implies that 
there is a small $\eta_0>0$ such that 
$\phi ^\varepsilon -\eta$ is also a supersolution of \eqref{eq:HJe} 
for every $\eta \in (0,\eta_0]$.

5. 
Set 
\[ \delta_0:=-\max_{\partial_P C}(\overline{u}-\phi), \]
which is positive by \eqref{eq:test_function1}.
Also, let $\delta:=\min \{ \delta_0/2, \ \eta_0 \}$.
We then have
\[ \max_{\partial_P C} (u^{\varepsilon}-\phi^{\varepsilon}) \le -\delta, \]
i.e., 
$u^{\varepsilon} \le \phi^{\varepsilon}-\delta$
on $\partial_P C$ for $\varepsilon>0$ small enough.
We now apply the comparison principle 
for a subsolution $u^{\varepsilon}$
and a supersolution $\phi^{\varepsilon}-\delta$ of \eqref{eq:HJe} 
to obtain 
$u^{\varepsilon} \le \phi^{\varepsilon}-\delta$ in $C$.
Taking $\limsup^*_{\varepsilon \to 0}$ at $(\hat{x},\hat{t})$,
we see $\overline{u}(\hat{x},\hat{t}) \le \phi(\hat{x},\hat{t})-\delta$.
This is a contradiction to \eqref{eq:test_function2},
and hence $\overline{u}$ is a subsolution of \eqref{eq:mean_hmilt_eq}.

6.
Similarly we show that $\underline{u}$ is a supersolution of \eqref{eq:mean_hmilt_eq},
and therefore $\overline{u}=\underline{u}$ by comparison.
This implies the locally uniform convergence of $u^{\varepsilon}$ to 
the unique viscosity solution $\bar{u}^{\theta}$ of \eqref{eq:mean_hmilt_eq} and \eqref{eq:initin}.
\end{proof}

\begin{rem}
As long as the comparison principle is true,
this homogenization result still holds for more general equations 
with $H_1$ and $H_2$ which are not necessarily of the forms 
$H_1=H_1(x,t,p)$ and $H_2=H_2(r,p)$.
\end{rem}
\begin{appendix}
\section{Lipschitz continuity of solutions}\label{appendix}

The properties of the solution of the problem \eqref{eq6} and \eqref{eq:initin} 
might come in handy when studying numerical results.
For this reason we prove here a Lipschitz estimate for the solution,
under the assumption that the initial datum is Lipschitz continuous. 
We also give an explicit representation of the Lipschitz constant 
in terms of the Lipschitz constant of the initial datum 
and the Lipschitz constant of the Hamiltonian $H_1$ 
denoted by $D(t)$ as in \eqref{itm6'}. 
Although there are plenty of results in the literature 
concerning the Lipschitz continuity of viscosity solutions, 
a Lipschitz estimate for the Hamiltonians which are being studied in this paper 
does not exist up to the authors' knowledge. 
Moreover we are more concerned in a Lipschitz constant 
that does not depend on the parameter $\theta$.

\begin{proof}[Proof of Proposition \ref{prop2}]
1.
Let $\Phi (x,y,t)=u(x,t)-u(y,t)-L(t)|x-y|$ 
for $x,y\in \mathbf{R}^n$ and $t\in [0,T)$. 
We proceed by contradiction.
Suppose that
\[ M =\sup_{x,y\in \mathbf{R}^n,t\in [0,T)} \Phi (x,y,t)>0. \]

Since $u$ has at most linear growth (Theorem \ref{the2}) 
and $u_0$ is Lipschitz continuous with Lipschitz constant $L_0$, 
we have
\begin{align}
u(x,t)-u(y,t)
&\leq u_0(x)+Kt-(u_0(y)-Kt) \notag \\
&\leq C_T+L_0|x-y|
\label{eq:uuCL}
\end{align}
with $C_T=2KT$.
We define
\[ \Phi_{\sigma} (x,y,t) 
=u(x,t)-u(y,t)-L_{\alpha}(t)|x-y|-\frac{\eta}{T-t}-\alpha (|x|^2+|y|^2), \]
where 
\begin{align*}
L_\alpha (t)&=\max \left\{ L_0, \, 1+2\sqrt{\alpha C_T} \right\} 
e^{\int_0^t (D(s)+\mu (s))\, ds}, \\
\mu (t)&=2\sqrt{\alpha C_T} D(t).
\end{align*}
Set
\[ M_\sigma =\sup_{x,y\in \mathbf{R}^n,\, t\in [0,T)}\Phi _\sigma (x,y,t) \]
for $\sigma =(\eta ,\alpha )$.
Since $u$ has at most linear growth,
there are $x_\sigma , y_\sigma \in \mathbf{R}^n$ and $t_\sigma \in [0,T)$ such that
\[ M_\sigma =\Phi _\sigma (x_\sigma,y_\sigma,t_\sigma). \]

2.
By the definition of $M$, for every $\delta >0$ 
there are $x_\delta ,y_\delta$ and $t_\delta$ such that 
\[ \Phi (x_\delta, y_\delta, t_\delta ) \geq M -\delta. \] 
Since $L_\alpha(t) \rightarrow L(t)=\max \{ L_0, \, 1 \} 
e^{\int_0^t D(s)\, ds}$ uniformly in $t$ as $\alpha \rightarrow 0$, 
there is $\varepsilon >0$ small enough, independent of $\delta$, such that
\[ -L_\alpha (t_\delta )>-\varepsilon -L(t_\delta ). \]
Then for $\delta =M/4$ we have
\begin{align*}
M_\sigma &\geq \Phi_\sigma (x_\delta ,y_\delta ,t_\delta ) \\
&=\Phi (x_\delta ,y_\delta ,t_\delta )-\frac{\eta}{T-t_\delta}
-\alpha (|x_\delta |^2+|y_\delta |^2)-\varepsilon |x_\delta -y_\delta |\\
&\geq M-\delta - \frac{\eta}{T-t_\delta}
-\alpha (|x_\delta |^2+|y_\delta |^2)-\varepsilon |x_\delta -y_\delta | \\
&\geq \frac{M}{2}>0
\end{align*}
for $\eta , \alpha ,\varepsilon$ small enough, since $\delta$ is fixed. From this it follows that  
\begin{equation} 
\Phi_{\sigma}(x_{\sigma},y_{\sigma},t_{\sigma})>0. 
\label{eq:phisigp}
\end{equation}

3.
We claim
\begin{equation} 
\alpha |x_{\sigma} |, \, \alpha |y_{\sigma} | \leq \sqrt{\alpha C_T}. 
\label{eq:thepri}
\end{equation}
By \eqref{eq:uuCL} we observe
\[ u(x,t)-u(y,t)-L_\alpha (t)|x-y|
\leq C_T +(L_0-L_\alpha (t))|x-y|\leq C_T \]
for all $(x,y,t)$, and hence we can write
\[ u(x_{\sigma},t_{\sigma})-u(y_{\sigma},t_{\sigma})
-L_\alpha (t_{\sigma})|x_{\sigma}-y_{\sigma}|-\frac{\eta}{T-t_{\sigma}}
-\alpha |y_{\sigma}|^2 \leq C_T. \]
The left-hand side is equal to 
$\Phi_{\sigma}(x_{\sigma},y_{\sigma},t_{\sigma})+\alpha|x_{\sigma}|^2$.
By \eqref{eq:phisigp} we get 
\[ \alpha |x_{\sigma}|^2 \le C_T. \]
Similarly we have $\alpha |y_{\sigma}|^2 \le C_T$,
and these inequalities show \eqref{eq:thepri}.

4.
We prove $t_\sigma >0$ and $x_\sigma \neq y_\sigma$.
Suppose that $t_\sigma =0$.
Then, since $u_0=u( \cdot ,0)$ is Lipschitz continuous 
with Lipschitz constant $L_0 \leq L_\alpha (0)$, we have
\[ 0< \Phi _\sigma (x_\sigma ,y_\sigma ,0) 
\leq u(x_\sigma ,0)-u(y_\sigma ,0)-L_\alpha (0)|x_\sigma-y_\sigma|
\leq 0, \]
a contradiction.
Since $\Phi _\sigma (x_\sigma ,x_\sigma ,t_\sigma )<0$,
we have that $x_\sigma \neq y_\sigma $.

5.
Since $x_\sigma \neq y_\sigma $,
we have $|x-y|>0$ in a neighbourhood of $(x_\sigma ,y_\sigma)$.
Therefore, we can apply \cite[Lemma 2]{Crandall_Lions} and get,
for $p_\sigma=(x_\sigma -y_\sigma)/|x_\sigma -y_\sigma |$,
\begin{align}
&\frac{\eta}{T^2}+L'_\alpha (t_\sigma )|x_\sigma -y_\sigma | \notag \\
&\leq \left\{ H_1 (x_\sigma ,t_\sigma ,L_\alpha (t_\sigma ) p_\sigma +2\alpha x_\sigma )
-H_1(y_\sigma ,t_\sigma ,L_\alpha (t_\sigma ) p_\sigma -2\alpha y_\sigma ) \right\} \notag \\
& \quad +\theta \{ \beta (u(x_\sigma ,t_\sigma )h(L_\alpha (t_\sigma ) p_\sigma  +2\alpha x_\sigma )
-\beta (u(y_\sigma ,t_\sigma ))h(L_\alpha (t_\sigma ) p_\sigma -2\alpha y_\sigma ) \} \notag \\
&=: I_1+I_2. \label{eq:last_eq}
\end{align}
We can also rewrite $I_2$ as
\begin{align}
I_2 &= 
\theta \left\{  \beta (u(x_\sigma ,t_\sigma ))
-\beta (u(y_\sigma ,t_\sigma )) \right\} 
h(L_\alpha (t_\sigma )p_\sigma +2\alpha x_\sigma ) \notag \\
& \quad + \theta \beta (u(y_\sigma ,t_\sigma )) \cdot 
\{ h(L_\alpha (t_\sigma )p_\sigma +2\alpha x_\sigma )
-h(L_\alpha (t_\sigma )p_\sigma -2\alpha y_\sigma ) \}. 
\label{eq:rewriteI2}
\end{align}

6.
Let us give estimates of $I_1$ and $I_2$.
Using \eqref{itm6'}, the Lipschitz continuity in $x$ of $H_1$, 
together with \eqref{itm5}, we get
\begin{align*}
I_1&\leq D(t_\sigma )|x_\sigma -y_\sigma | \cdot 
|L_\alpha (t_\sigma ) p_\sigma +2\alpha x_\sigma | \\
&\quad + \{ H_1(y_\sigma ,t_\sigma ,L_\alpha (t_\sigma ) p_\sigma +2\alpha x_\sigma )-H_1(y_\sigma ,t_\sigma ,L_\alpha (t_\sigma ) p_\sigma -2\alpha y_\sigma ) \}.
\end{align*}
The first term on the right-hand side can be estimated 
by \eqref{eq:thepri} and $|p_\sigma |=1$,
while we apply \eqref{itm6.1},
the Lipschitz continuity in $p$ of $H_1$, to the second term.
Then
\[ I_1\leq D(t_\sigma )|x_\sigma -y_\sigma |
\left( L_\alpha (t_\sigma ) +2\sqrt{\alpha C_T} \right)
+2\alpha L_2|x_\sigma +y_\sigma |. \]
By the definition of $\mu$ we have
\begin{equation}
I_1 \le |x_\sigma -y_\sigma | \left( D(t_{\sigma})L_{\alpha} (t_{\sigma})+\mu(t_{\sigma}) \right)
+2\alpha L_2|x_\sigma +y_\sigma |.
\label{eq:I_1_estim}
\end{equation}

We next show that the first term on the right-hand side 
of \eqref{eq:rewriteI2} is not positive.
We first note that $u(x_\sigma ,t_\sigma ) \geq u(y_\sigma ,t_\sigma )$
by \eqref{eq:phisigp}.
Therefore, we have 
$\beta (u(x_\sigma ,t_\sigma ))\geq \beta (u(y_\sigma ,t_\sigma ))$
since $\beta $ is increasing. 
As a second remark, by the definition of $L_\alpha$ and \eqref{eq:thepri},
we have
\[ |L_\alpha (t_\sigma )p_\sigma +2\alpha x_\sigma |
\geq L_\alpha (t_\sigma )-2\sqrt{\alpha C_T}\geq 1. \]
Then the assumption \eqref{eq:hprop3} 
and the continuity of $h$ imply that
$h(L_\alpha (t_\sigma )p_\sigma +2\alpha x_\sigma ) \leq 0$.
According to these two remarks we have
\[ I_2 \leq \theta |\beta (u(y_\sigma ,t_\sigma ))| \cdot
|h(L_\alpha (t_\sigma )p_\sigma +2\alpha x_\sigma )
-h(L_\alpha (t_\sigma )p_\sigma -2\alpha y_\sigma )|. \]
Using the boundedness of $\beta$ and the uniform continuity of $h$, \eqref{hprop1},
we can further estimate the above as follows:
\begin{equation}\label{eq:I_2_esimate}
I_2 \leq  \theta M \omega _h (2\alpha |x_\sigma +y_\sigma |),
\end{equation}
where $M$ is an upper bound for $|\beta|$.

7.
Applying \eqref{eq:I_1_estim} and \eqref{eq:I_2_esimate} 
to \eqref{eq:last_eq}, we get
\begin{equation}\label{eq:almost_there}
\frac{\eta}{T^2}+L'_\alpha (t_\sigma )|x_\sigma -y_\sigma |
\leq |x_\sigma -y_\sigma |(L_\alpha (t_\sigma )D(t_\sigma )
+\mu (t_\sigma ))+J,
\end{equation}
where 
\[ J=2\alpha L_2|x_\sigma +y_\sigma |
+\theta M\omega _h(2\alpha |x_\sigma +y_\sigma |). \]
Note that the function $L_\alpha$ has been chosen so that it solves the differential equation 
\[ L_\alpha '(t)=(D(t)+\mu (t))L_\alpha (t). \]
According to this, the estimate \eqref{eq:almost_there} becomes
\[ \frac{\eta}{T^2}
\leq \mu (t_\sigma )(1-L_\alpha (t_\sigma ))|x_\sigma -y_\sigma |+J. \]
Since $L_\alpha >1$, we have
\[ \frac{\eta}{T^2}\leq
J=2\alpha L_2|x_\sigma +y_\sigma |
+\theta M\omega _h(2\alpha |x_\sigma +y_\sigma |). \]
Using \eqref{eq:thepri}, 
we can send $\alpha \rightarrow 0$ and get a contradiction.
\end{proof}

\end{appendix}
\section*{Acknowledgments}
The authors are grateful to Professor R\'egis Monneau
for bringing this problem to their attention and for generously allowing them to develop several of his ideas in this paper.
The authors also thank Professor Cyril Imbert for valuable comments and Dr. Emiliano Cristiani for bringing to their attention the papers \cite{Gomes-Faugeras.1999} and \cite{Delfour-Zolesio.2004}.

Much of the work of the authors was done 
while the first author visited \'Ecole des Ponts ParisTech in 2013 and 2014,
supported by a grant from 
``The research and training center for new development in mathematics''
(Graduate School of Mathematical Science, The University of Tokyo), MEXT, Japan,
the ANR projects HJnet ANR-12-BS01-0008-01
and Grant-in-aid for Scientific Research of JSPS Fellows No.~26-30001.

\end{document}